\documentclass[ruled]{article}
\usepackage{geometry}
\geometry{verbose,lmargin=3cm,rmargin=3cm}
\usepackage[latin9]{inputenc}
\usepackage{color}
\usepackage{mathtools}

\usepackage{todonotes}
\usepackage{amsmath}
\usepackage{mathrsfs}
\usepackage{accents}
\usepackage{float}
\usepackage{graphicx}
\usepackage{caption}
\usepackage{subcaption}
\usepackage{enumitem}  
\usepackage[unicode=true,pdfusetitle, bookmarks=true,bookmarksnumbered=false,bookmarksopen=false, breaklinks=false,pdfborder={0 0 1},backref=false,colorlinks=true] {hyperref}
\usepackage{setspace}
\usepackage{amsthm}
\usepackage{bbm}
\usepackage{amssymb}
\usepackage[authoryear]{natbib}
\hypersetup{linkcolor=blue,citecolor=blue}

\makeatletter

\theoremstyle{plain}

\theoremstyle{plain}

\floatstyle{ruled}
\newfloat{algorithm}{tbp}{loa}
\providecommand{\algorithmname}{Algorithm}
\floatname{algorithm}{\protect\algorithmname}

\usepackage{algorithm}
\usepackage{dsfont}
\usepackage{algpseudocode}
\usepackage{mathtools}





\DeclareMathOperator*{\argmin}{argmin}



\newcommand{\setZ}{\mathsf{Z}}

\newcommand{\F}{\mathcal{F}}

\newcommand{\bigO}{\mathcal{O}}
\newcommand{\smallo}{{\scriptscriptstyle\mathcal{O}}} 
\newcommand{\R}{\mathbb{R}}
\renewcommand{\P}{\mathbb{P}}
\newcommand{\E}{\mathbb{E}}

\newcommand{\ind}{\mathbbm{1}}


\newtheorem{theorem}{Theorem}
\newtheorem{lemma}{Lemma}
\newtheorem{proposition}{Proposition}

\newtheorem{corollary}{Corollary}

\newtheorem{assumption}{Assumption}

\makeatother

\title{Gradient Descent  for Convex and Smooth Noisy Optimization}
\date{}
\author{
  Feifei Hu and Mathieu Gerber \vspace{0.2cm }\\
  School of Mathematics \vspace{0.2cm }\\
  University of Bristol}

\begin{document}
\maketitle

\begin{abstract}
We study the use of gradient descent with   backtracking line search (GD-BLS)  to solve the noisy optimization problem $\theta_\star:=\argmin_{\theta\in\R^d} \E[f(\theta,Z)]$, imposing  that the objective function $F(\theta):=\E[f(\theta,Z)]$ is strictly convex but not necessarily $L$-smooth. Assuming that $\E[\|\nabla_\theta f(\theta_\star,Z)\|^2]<\infty$, we first prove that sample average approximation  based on GD-BLS allows to estimate $\theta_\star$ with an error of size  $\bigO_\P(B^{-0.25})$, where $B$ is the available computational budget. We then show that we can improve upon this rate by stopping the optimization process earlier when the gradient of the objective function is sufficiently close to zero, and use the residual computational budget to optimize, again with GD-BLS, a finer approximation of $F$. By  iteratively applying this strategy $J$ times we establish that we can estimate $\theta_\star$  with an error of size $\bigO_\P(B^{-\frac{1}{2}(1-\delta^{J})})$, where $\delta\in(1/2,1)$  is  a user-specified  parameter. More generally, we show that if $\E[\|\nabla_\theta f(\theta_\star,Z)\|^{1+\alpha}]<\infty$ for some  known $\alpha\in (0,1]$ then this approach, which can be seen as a retrospective approximation algorithm with a fixed computational budget,  allows to learn $\theta_\star$ with an error of size $\bigO_\P(B^{-\frac{\alpha}{1+\alpha}(1-\delta^{J})})$, where  $\delta\in(2\alpha/(1+3\alpha),1)$ is a tuning parameter.   Beyond knowing $\alpha$, achieving the aforementioned convergence rates do not require to tune the algorithms' parameters according to the specific functions  $F$ and $f$ at hand, and  we exhibit a simple noisy optimization problem for which stochastic gradient is not guaranteed to converge while the algorithms discussed in this work are.

\end{abstract}

\section{Introduction}\label{sec:Intro}

\subsection{Set-up and problem formulation}

We consider the problem of computing
\begin{align}\label{eq:optim_prob}
\theta_\star:=\argmin_{\theta\in\R^d} F(\theta),\quad F(\theta)=\E[f(\theta,Z)]
\end{align}
where $Z$ is a random variable taking its values in a Polish space $\setZ$ and where $f:\R^d\times\setZ\rightarrow\R$ is a measurable function such that $\nabla_\theta f(\theta,z)$ exists for all $(\theta,z)\in\Theta\times \setZ$. We assume that only the function $f$ can be evaluated point-wise and that to solve \eqref{eq:optim_prob} we have at our disposal a sequence $(Z_i)_{i\geq 1}$ of i.i.d.~copies of $Z$. All the random variables are assumed to be defined on a complete probability space $(\Omega,\F,\P)$ and the function $F$ is assumed to be strictly convex on $\R^d$, and thus the optimization problem \eqref{eq:optim_prob} is   well-defined. Formal assumptions on $F$ are given in Section \ref{sub:assumptions}, and   we stress that in this work    the function $\theta\mapsto f(\theta,Z)$ is not supposed to be $\P$-almost surely convex.

Due to their ease of implementation and low computational cost, stochastic gradient (SG) algorithms  arguably constitute the main set of tools used in practice for solving this type of optimization problems \citep{patel2022global}. Stochastic gradient methods are for instance widely applied  for training machine learning algorithms \citep{bottou2018optimization} or for performing statistical inference in large datasets \citep{Toulis2017}. In order to guarantee that a SG algorithm is stable when deployed on an unbounded search space it is commonly assumed  that the variance of the norm of the stochastic gradient $\nabla_\theta f(\theta,Z)$ is bounded and that the function $F$ is $L$-smooth, that is that the gradient of $F$ is $L$-Lipschitz  \citep[see for instance][for a review of   convergence results for SG]{patel2022stopping,garrigos2023handbook}. Under the additional assumption that $F$ is twice differentiable, the condition that $F$ is $L$-smooth can be replaced by the weaker assumption that $F$ is $(L_0,L_1)$-smooth, in the sense that $\|\nabla^2 F(\theta)\|\leq L_0+L_1\|\nabla F(\theta)\|$ for all $\theta\in\R^d$ and some finite constants $L_0$ and $L_1$ \citep[see][Assumption 3]{zhang2019gradient}. The assumption that the variance of $\|\nabla_\theta f(\theta,Z)\|$ is bounded can also be weakened. Notably, letting $C$ be a finite constant, it can instead be assumed that, for all $\theta\in\R^d$, this variance is bounded by $C+C\|\nabla F(\theta)\|^2$    (see e.g.~Assumption 4.3 in \citealp{bottou2018optimization} and   Assumption 2 in \citealp{faw2022power})  or that $\E[\|\nabla_\theta f(\theta,Z)\|^2]\leq C+C\|\nabla F(\theta)\|^2$   \citep[see e.g][Assumption 2]{khaled2020better}. Convergence results for SG that hold under the weaker condition that $\E[\|\nabla_\theta f(\theta,Z)\|^{1+\alpha}]$ for some $\alpha\in (0,1]$ have been obtained by some authors, see for instance \citet{wang2021convergence,patel2022global}.

These   assumptions  used to establish the convergence of SG algorithms  may however be violated even for simple optimization problems. For instance, as shown in \citet{patel2022global}, both the assumption that $F$ is $L$-smooth and the assumption that the variance of $\|\nabla_\theta f(\theta,Z)\|$ is bounded, as well as their variants mentioned in the previous paragraph, may not hold  for parameter inference in Poisson regression models. More precisely, \citet[][Proposition 3]{patel2022global} show  the following result:
 \begin{proposition}\label{prop:Poisson}
 Consider the $d=1$ dimensional optimization problem \eqref{eq:optim_prob} where $Z=(X,Y)$, with $X$ and $Y$  two independent Poisson random variables such that $\E[X]=\E[Y]=1$, and where  the function $f:\R^d\times\setZ\rightarrow\R$ is defined by
$$
f(\theta,z)=-z_2z_1\theta+\exp(\theta z_1),\quad \theta\in\R,\quad z\in\setZ:=\R^2.
$$
Then, the function $F$ is strictly convex, twice continuously differentiable and  $\mathrm{Var}\big(\nabla_\theta f(\theta,Z)\big)<\infty$ for all $\theta\in\R$. However, $F$ is neither  globally $L$-smooth nor $(L_0,L_1)$-smooth, and there exists no finite constant $C$ such that $\mathrm{Var}\big(\nabla_\theta f(\theta,Z)\big)\leq C+C  |\nabla F(\theta) |^2$ for all $\theta\in\R$ or such that $\E\big[|\nabla_\theta f(\theta,Z)|^2\big]\leq C+C |\nabla F(\theta)|^2$ for all $\theta\in\R$ (and thus $\sup_{\theta\in\R}\mathrm{Var}\big(\nabla_\theta f(\theta,Z)\big)=\infty$).
 \end{proposition}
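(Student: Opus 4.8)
The plan is to reduce every assertion to an explicit computation built on the moment generating function (MGF) of a Poisson variable. Write $Z=(X,Y)$ with $X,Y\iid\mathrm{Poisson}(1)$ and recall that $M(t):=\E[e^{tX}]=\exp(e^t-1)$ is finite and $C^\infty$ (indeed real-analytic) on all of $\R$, with $M'(t)=e^t\exp(e^t-1)$ and $M''(t)=e^t(1+e^t)\exp(e^t-1)$. First I would obtain $F$ in closed form: since $\E[XY]=\E[X]\E[Y]=1$ by independence, $F(\theta)=\E[-XY\theta+e^{\theta X}]=-\theta+M(\theta)=-\theta+\exp(e^\theta-1)$. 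Differentiating this closed form gives $F'(\theta)=-1+e^\theta\exp(e^\theta-1)$ and $F''(\theta)=e^\theta(1+e^\theta)\exp(e^\theta-1)$. Because $F''(\theta)>0$ for every $\theta\in\R$, $F$ is strictly convex, and it is manifestly twice continuously differentiable; this settles the first two claims. The interchange needed to identify $F'=\E[\nabla_\theta f]$, used below, is justified on each compact $\theta$-interval $[a,b]$ by dominating $|z_1 e^{\theta z_1}|$ by $z_1 e^{b z_1}$ (valid since $X\geq 0$), which is integrable as $M'(b)<\infty$.

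Next I would record the second-moment formula for the stochastic gradient. Since $\nabla_\theta f(\theta,Z)=Xe^{\theta X}-XY$, expanding the square and using independence together with $\E[Y]=1$ gives
\[
\E\big[(\nabla_\theta f(\theta,Z))^2\big]=\E[X^2 e^{2\theta X}]-2\E[X^2 e^{\theta X}]+\E[X^2]\E[Y^2].
\]
Each term here is a derivative of the MGF: $\E[X^2 e^{cX}]=M''(c)=e^c(1+e^c)\exp(e^c-1)$, finite for every $c\in\R$, while $\E[X^2]=\E[Y^2]=2$. Hence $\E[(\nabla_\theta f(\theta,Z))^2]<\infty$ and therefore $\mathrm{Var}(\nabla_\theta f(\theta,Z))<\infty$ for all $\theta$, which is the third claim. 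It is worth noting for later use the identities $\E[X^2 e^{\theta X}]=F''(\theta)$ and $\E[X^2 e^{2\theta X}]=M''(2\theta)=e^{2\theta}(1+e^{2\theta})\exp(e^{2\theta}-1)$.

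For the failure of smoothness I would argue by asymptotics as $\theta\to+\infty$. Since $F''(\theta)\to\infty$, $F$ is not $L$-smooth. For $(L_0,L_1)$-smoothness, the leading behaviours $F'(\theta)\sim e^{\theta}\exp(e^\theta)$ and $F''(\theta)\sim e^{2\theta}\exp(e^\theta)$ give $F''(\theta)/|F'(\theta)|\sim e^\theta\to\infty$, so no finite $L_0,L_1$ can satisfy $F''(\theta)\leq L_0+L_1|F'(\theta)|$: dividing by $|F'(\theta)|\to\infty$ would force the left-hand ratio to stay bounded, a contradiction. The crux of the proposition, and the only step requiring real care, is then the comparison of two nested exponentials. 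From the formulas above, $\E[(\nabla_\theta f(\theta,Z))^2]\sim e^{4\theta}\exp(e^{2\theta})$ whereas $|F'(\theta)|^2\sim e^{2\theta}\exp(2e^\theta)$, and the key observation is that the inner exponent $e^{2\theta}=(e^\theta)^2$ grows strictly faster than $2e^\theta$, so that $\exp(e^{2\theta}-2e^\theta)\to\infty$. This gap reflects precisely the difference between squaring the gradient before versus after taking the expectation.

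Consequently $\E[(\nabla_\theta f(\theta,Z))^2]/|F'(\theta)|^2\to\infty$, which rules out any bound $\E[|\nabla_\theta f(\theta,Z)|^2]\leq C+C|F'(\theta)|^2$ with finite $C$. Finally, since $\mathrm{Var}(\nabla_\theta f(\theta,Z))=\E[(\nabla_\theta f(\theta,Z))^2]-F'(\theta)^2$ and the first term dominates the second by the same nested-exponential comparison, we get $\mathrm{Var}(\nabla_\theta f(\theta,Z))\sim\E[(\nabla_\theta f(\theta,Z))^2]\to\infty$; this simultaneously excludes $\mathrm{Var}(\nabla_\theta f(\theta,Z))\leq C+C|F'(\theta)|^2$ for any finite $C$ and yields $\sup_{\theta\in\R}\mathrm{Var}(\nabla_\theta f(\theta,Z))=\infty$, completing the argument.
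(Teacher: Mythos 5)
Your proof is correct, but note that the paper itself never proves this proposition: it is quoted verbatim from \citet[Proposition 3]{patel2022global}, and the only in-paper material overlapping with your argument is the display \eqref{eq:poisson} (inside the proof of Proposition \ref{prop:Poisson22}), which records exactly the closed forms $F(\theta)=-\theta+\exp(e^\theta-1)$, $\nabla F(\theta)=-1+\exp(e^\theta+\theta-1)$ and $\nabla^2F(\theta)=(e^\theta+1)\exp(e^\theta+\theta-1)$ that you derive. So what you have produced is a self-contained substitute for the external citation, and it is a sound one: the reduction of every claim to Poisson moment-generating-function identities ($\E[X^2e^{cX}]=M''(c)$, $\E[XY]=\E[X^2]\E[Y^2]/4=1$, etc.) gives finiteness of $\mathrm{Var}(\nabla_\theta f(\theta,Z))$ pointwise, the blow-up $F''\to\infty$ kills global $L$-smoothness, the ratio $F''/|F'|\sim e^\theta\to\infty$ kills $(L_0,L_1)$-smoothness, and the nested-exponential comparison $\exp(e^{2\theta})$ versus $\exp(2e^\theta)$ is precisely the right mechanism for showing $\E[|\nabla_\theta f(\theta,Z)|^2]/|\nabla F(\theta)|^2\to\infty$, which then yields all three remaining claims at once (including $\sup_\theta\mathrm{Var}=\infty$, since $\mathrm{Var}\sim\E[|\nabla_\theta f|^2]$). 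One cosmetic caveat: several of your asymptotic equivalences drop multiplicative constants --- e.g.\ $F'(\theta)\sim e^{\theta-1}\exp(e^\theta)$ rather than $e^\theta\exp(e^\theta)$, and similarly $\E[|\nabla_\theta f(\theta,Z)|^2]\sim e^{4\theta-1}\exp(e^{2\theta})$ --- so if $\sim$ is read as genuine asymptotic equivalence the stated constants are off by factors of $e^{\pm1}$; this is harmless here because every conclusion rests only on ratios diverging, but it would be cleaner either to keep the constants or to state explicitly that $\sim$ means equality up to bounded positive factors.
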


To the best of our knowledge, \citet{patel2022global} provide the only existing general convergence result for SG on unbounded spaces that holds under the assumption that $F$ is only locally $L$-smooth, and which allows the variance of $\|\nabla_\theta f(\theta,Z) \|$ to be unbounded.    \citet{patel2022global} actually only assume  that, for some $\alpha\in (0,1]$, the gradient of $F$ is locally $\alpha$-H\"older continuous and that $\E[\|\nabla_\theta f(\theta,Z)\|^{1+\alpha}]$ is finite for all $\theta\in\R^d$. However, to guarantee the stability of SG,  their  result requires that the  H\"older constant of $\nabla F$ on a ball centred at $\theta$ and of size $\E[\|\nabla_\theta f(\theta,Z) \|^{1+\alpha}]^{\frac{1}{1+\alpha}}$ does not increase too quickly as $\|\theta\|\rightarrow\infty$ \citep[see Assumption 5 in][]{patel2022global}. This condition appears to be quite strong since, as we show  in the following proposition, it is not satisfied for the optimization problem considered in Proposition \ref{prop:Poisson}. 
\begin{proposition}\label{prop:Poisson22}
 Consider the set-up of Proposition \ref{prop:Poisson}. Then, there exists no $\alpha\in(0,1]$ such that Assumption 5 used in \citet{patel2022global} to establish the convergence of SG   holds.
 \end{proposition}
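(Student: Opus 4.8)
The plan is to make explicit the three ingredients entering Assumption 5 of \citet{patel2022global} for this example, and to show that for \emph{every} fixed $\alpha\in(0,1]$ the local H\"older constant of $\nabla F$ that the assumption controls blows up as $\theta\to+\infty$ far too fast to satisfy any ``does not increase too quickly'' restriction. Since $\alpha$ is arbitrary, this rules out the existence of an admissible $\alpha$ and proves the claim. Throughout I use $d=1$, so norms are absolute values, and I focus on the regime $\theta\to+\infty$, where the term $e^{\theta X}$ dominates.

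First I would record closed forms. Using independence of $X$ and $Y$, $\E[X]=\E[Y]=1$, and the Poisson moment generating function $\E[e^{\theta X}]=\exp(e^{\theta}-1)$, one obtains
$$F(\theta)=-\theta+\exp(e^{\theta}-1),\qquad \nabla F(\theta)=-1+e^{\theta}\exp(e^{\theta}-1),\qquad \nabla^2F(\theta)=e^{\theta}(1+e^{\theta})\exp(e^{\theta}-1),$$
so that $\nabla F$ grows doubly exponentially in $\theta$. The crux is then to lower bound the radius $M(\theta):=\E[|\nabla_\theta f(\theta,Z)|^{1+\alpha}]^{1/(1+\alpha)}$ of the ball on which the H\"older constant is measured. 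Since $\nabla_\theta f(\theta,Z)=X(e^{\theta X}-Y)$, restricting the expectation to the event $\{X=k,\,Y=0\}$, which has probability $e^{-2}/k!$, removes both the $Y$-dependence and any cancellation in $e^{\theta X}-Y$ and gives, for every integer $k\geq1$,
$$\E\big[|\nabla_\theta f(\theta,Z)|^{1+\alpha}\big]\;\geq\;\frac{e^{-2}}{k!}\,k^{1+\alpha}\,e^{(1+\alpha)\theta k}.$$
Optimizing the right-hand side over $k$ — Stirling's formula shows the near-optimal index is $k=\lfloor e^{(1+\alpha)\theta}\rfloor$, a positive integer for $\theta$ large — yields $\E[|\nabla_\theta f(\theta,Z)|^{1+\alpha}]\geq\exp\!\big((1+o(1))\,e^{(1+\alpha)\theta}\big)$, whence $M(\theta)\geq\exp\!\big(\tfrac{1}{2(1+\alpha)}e^{(1+\alpha)\theta}\big)$ for all $\theta$ large. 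Thus the ball itself is doubly exponentially large.

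Next I would combine the two estimates. Taking the two points $\theta$ and $\theta+M(\theta)$ (the right endpoint of the ball) as a lower bound for the H\"older seminorm, the H\"older constant $H_\alpha(\theta)$ of $\nabla F$ on the ball satisfies
$$H_\alpha(\theta)\;\geq\;\frac{|\nabla F(\theta+M(\theta))-\nabla F(\theta)|}{M(\theta)^{\alpha}}.$$
Since $\theta+M(\theta)\geq M(\theta)$ one has $\nabla F(\theta+M(\theta))\geq\exp\!\big(e^{M(\theta)}-1\big)-1$, which is at least triply exponential in $\theta$ because $M(\theta)$ is doubly exponential, whereas the denominator $M(\theta)^{\alpha}$ is only doubly exponential. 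Hence $H_\alpha(\theta)$ grows at least triply exponentially as $\theta\to+\infty$. This contradicts the requirement of Assumption 5 that $H_\alpha(\theta)$ grow at most at the controlled rate the assumption permits as $\|\theta\|\to\infty$; as $\alpha\in(0,1]$ was arbitrary, no admissible $\alpha$ exists.

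The step I expect to be the main obstacle is the lower bound on $M(\theta)$: converting the infinite Poisson sum into a sharp doubly-exponential estimate. The single-term restriction to $\{X=k,\,Y=0\}$ is what makes this tractable, reducing the task to a one-dimensional optimization settled by Stirling's formula; the only care needed is to make the $o(1)$ in the exponent explicit and to verify that the optimizing index remains a positive integer for large $\theta$. Once $M(\theta)$ is pinned down, the final step follows immediately from the closed form of $\nabla F$.
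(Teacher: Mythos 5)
Your proposal follows the same skeleton as the paper's proof: (i) show that the ball radius $M(\theta)=\E[|\nabla f(\theta,Z)|^{1+\alpha}]^{1/(1+\alpha)}$ is at least doubly exponential in $\theta$; (ii) deduce that the H\"older constant of $\nabla F$ on that ball is at least triply exponential, because $\nabla F$ evaluated at distance $\approx M(\theta)$ from $\theta$ is of order $\exp\big(e^{M(\theta)}\big)$; (iii) conclude that Assumption 5 of \citet{patel2022global} fails. Your estimates in (i) and (ii) are correct. In (i) you restrict to the event $\{X=k,\,Y=0\}$ and optimize over $k$ via Stirling, which gives a sharper bound than needed; the paper gets a sufficient doubly exponential bound more cheaply via H\"older's inequality, $\E[|\nabla f(\theta,Z)|^{1+\alpha}]^{1/(1+\alpha)}\geq \E[|\nabla f(\theta,Z)|]\geq \E[Xe^{\theta X}]-\E[XY]=\exp(e^{\theta}+\theta-1)-1$, avoiding the combinatorial optimization entirely. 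Your step (ii), using the endpoint pair $(\theta,\theta+M(\theta))$ and monotonicity of $\nabla F$, is sound modulo the (easily supplied) observation that subtracting the merely doubly exponential term $\nabla F(\theta)$ does not affect the triply exponential lower bound; the paper does the analogous thing on an annulus inside the ball.

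The genuine weakness is step (iii). You never state what Assumption 5 actually requires, and the sentence claiming a contradiction with ``the controlled rate the assumption permits'' is not a proof: the assumption's benchmark is not some fixed absolute rate but a bound involving $F(\theta)$ and $\nabla F(\theta)$ themselves, and here those also blow up (doubly exponentially), so showing that the H\"older constant tends to infinity fast is not by itself a refutation. The paper pins the target down precisely: Assumption 5 fails if, for every $\epsilon>0$ and every $\alpha\in(0,1]$, one has $\liminf_{\theta\to\infty}\mathcal{L}_{\alpha,\epsilon}(\theta)\,G_\alpha(\theta)\big/\big(F(\theta)+|\nabla F(\theta)|^2\big)=\infty$, where $G_\alpha(\theta)=\E[|\nabla f(\theta,Z)|^{1+\alpha}]$ and $\mathcal{L}_{\alpha,\epsilon}(\theta)$ is the H\"older constant on the ball of radius $(G_\alpha(\theta)\vee\epsilon)^{1/(1+\alpha)}$; note the comparison is of the \emph{product} of the H\"older constant and the moment against $F+|\nabla F|^2$, and the radius carries an $\epsilon$-floor. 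Your two estimates do deliver this conclusion once the target is stated---a triply exponential factor times a diverging factor, against a doubly exponential denominator, with the $\epsilon$-floor irrelevant since your $M(\theta)\to\infty$---so the gap is one of assembly and precision rather than a failing step; but as written, the decisive comparison that constitutes the proof of the proposition is asserted rather than carried out.
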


Finally, we   note that   \citet{fehrman2020convergence}  study SG algorithms   under the assumption that $F$ is locally  $L$-smooth and allowing $\E[\|\nabla_\theta f(\theta,Z) \|^2]$ to be unbounded but, informally speaking, this reference only shows that SG will converge to $\theta_\star$ if the starting value of the algorithm is sufficiently close to that point. 

While we previously focused on SG methods, it is worth mentioning that stochastic proximal point methods \citep[see][and references therein]{asi2019stochastic} form another popular set of tools for solving optimization problems as defined in \eqref{eq:optim_prob}. These  algorithms have the advantage of having stronger theoretical properties than SG approaches but, on the other hand, they usually have the  drawback to require, at each iteration, to use a numerical method to approximate the solution of an optimization problem. The Poisson regression example introduced in Proposition \ref{prop:Poisson} provides a simple  illustration of this claim: convergence results for the stochastic proximal  point method applied to this problem have been obtained by \citet{asi2019stochastic} but, for this example,  there is no explicit solution to the optimization problem that needs to be solved at each iteration of the algorithm. We finally note that the results derived in \citet{asi2019stochastic} in fact apply to a broader class of algorithms, called  model-based   stochastic proximal  point methods. In these algorithms, the choice of the model determines the optimization problem that needs to be solved at each iteration, and \citet[][Example 3]{asi2019stochastic} use the  Poisson regression example as a motivating example for what they call the truncated model. However, the authors do not show that the resulting algorithm is stable, that is  that their key condition (C.iv) used to establish the stability of  model-based   stochastic proximal  point methods is satisfied.

Consequently, as illustrated by the Poisson regression example of Proposition \ref{prop:Poisson}, there are seemingly simple noisy optimization problems   on $\R^d$, where the objective function $F$ is strictly convex and $\E[\|\nabla_\theta f(\theta,Z) \|^2]$ is finite for all $\theta\in\R^d$,  which cannot be solved (easily) with theoretical guarantees using existing methods.

\subsection{Contribution of the paper}\label{sub:intro}

In this paper we first show that   gradient descent   with backtracking line search (GD-BLS) can be used to solve \eqref{eq:optim_prob} 
even in situations where $F$ is not $L$-smooth and where $\sup_{\theta\in\R^d}\E[\|\nabla_\theta f(\theta,Z)\|^{1+\alpha}]=\infty$ for all $\alpha\in(0,1]$.

 More precisely, we assume throughout this work that there is a computational cost $C_{\mathrm{grad}}\in \mathbb{N}$ for computing $\nabla_\theta f(\theta,z)$ and a computational cost $C_{\mathrm{eval}}\in \mathbb{N}$  for computing  $f(\theta,z)$  for a  pair $(\theta,z)\in\R^d\times\setZ$. Then, we first show that, given  the available computational budget $B\in\mathbb{N}$, by  applying GD-BLS to minimize the  random function  $F_{n_1(B)}:=\frac{1}{n_1(B)}\sum_{i=1}^{n_1(B)} f(\cdot, Z_i)$ we obtain a consistent  estimator $\hat{\theta}_{1,B}$ of $\theta_\star$. The convergence rate of this approach, which is a particular type of sample  average approximation algorithms \citep[see e.g.][]{kim2015guide}, depends on the choice of the function  $n_1$ and,  assuming for now that $\E[\|\nabla_\theta f(\theta_\star,Z)\|^{2}]<\infty$, the upper bound we obtain for this rate is  minimized when  $n_1(B)\approx B^{1/2}$, in which case $\|\hat{\theta}_{1,B}-\theta_\star\|=\bigO_\P(B^{-0.25})$.   Remark that the $B^{-0.25}$ convergence rate obtained for $\hat{\theta}_{1,B}$ is slower than the standard $B^{-0.5}$ convergence  rate (see Proposition \ref{prop:rate}).

In order to save computational resources  gradient descent algorithms are often stopped if a pre-specified convergence criterion is reached, typically when the gradient of the objective function is sufficiently close to zero. Following this approach,  we propose to allow  the optimization of $F_{n_1(B)}$ to stop earlier (i.e.~before the computational budget $B$ is exhausted) and, if any, to use  the remaining computational resources   to improve   the estimation of $\theta_\star$. In this work the optimization of  $F_{n_1(B)}$ is stopped  if its gradient  falls below some threshold $\tau_1(B)$ and the residual computational budget is used   to apply  GD-BLS  on a finer approximation $F_{n_2(B)}$ of $F$ (i.e.~$n_2(B)>n_1(B)$),  using   the estimate of $\theta_\star$ obtained by minimizing  $F_{n_1(B)}$ as initial value.  With  a careful choice for the functions $n_1$, $\tau_1$ and $n_2$ we show that the resulting estimate $\hat{\theta}_{2,B}$ of $\theta_\star$ has an error of size $B^{-0.5(1-\delta^2) }$, with $\delta\in(1/2,1)$  a user-specified parameter.

The  approach outlined in the previous paragraph can be generalized as follows to any $J\geq 2$ applications of GD-BLS on increasingly finer approximations of $F$: For $j=1,\dots,J-1$ we  stop earlier the optimization of the function $F_{n_j(B)}$ by GD-BLS if its gradient becomes smaller than some threshold $\tau_j(B)$ and, if any,  the remaining computational resources are used to repeat this process to the function $F_{n_{j+1}(B)}$, using  as starting value the estimate $\tilde{\theta}_{j,B}$ of $\theta_\star$ obtained by minimizing  $F_{n_j(B)}$. For a suitable choice of functions $(n_j)_{j\geq 1}$  and $(\tau_j)_{j\geq 1}$, and a user-specified parameter $\delta\in (1/2,1)$, we prove that the estimator $\hat{\theta}_{J,B}$ of $\theta_\star$ defined by this approach has an error of size $\bigO_\P(B^{-\frac{1}{2}(1-\delta^J)})$  while, for all $j\in\{1,\dots,J-1\}$, the estimator $\tilde{\theta}_{j,B}$ converges to $\theta_\star$ at rate $B^{-\frac{1}{2}(1-\delta^j)}$.  The assumptions under which these results are obtained are shown to be satisfied for the optimization problem considered in Proposition \ref{prop:Poisson}, and   to achieve the aforementioned  convergence rates it is not needed to tune the parameters of the algorithms according to the specific functions $F$ and $f$ at hand. 

 In practice, to observe the $B^{-\frac{1}{2}(1-\delta^J)}$ convergence rate of $\hat{\theta}_{J,B}$ it is necessary that the optimization of $F_{n_j(B)}$ stops earlier for all $j\in\{1,\dots,J-1\}$, and thus that GD-BLS is applied to the function $F_{n_J(B)}$. Informally speaking, if we denote by $J_B$ the smallest value of $j\in\{1,\dots,J\}$ such that  $\tilde{\theta}_{j,B}=\hat{\theta}_{J,B}$, we therefore expect the observed convergence rate for  $\hat{\theta}_{J,B}$ to be $B^{-\frac{1}{2}(1-\delta^{J_B})}$. In this work we do not study theoretically the behaviour of   $J_B$ but the numerical experiments suggest that, as $B$ increases, this random variable `grows' at speed $\log(B)$.

In some applications the variance of the noisy gradient is not finite, and in this work the  procedure described above is studied under the weaker assumption that $\E[\|\nabla_\theta f(\theta_\star,Z)\|^{1+\alpha}]<\infty$ for some $\alpha\in(0,1]$. In this scenario, the user needs to choose a noise parameter $\alpha'\in (0,1]$ and a $\delta\in (2\alpha'/(1+3\alpha'),1)$, and the proposed approach for computing $\theta_\star$ is proved to be consistent for any $\alpha'\in (0,1]$. In addition, if $\alpha'=\alpha$ then its convergence   rate for computing $\theta_\star$ is shown to be $B^{-\frac{\alpha}{1+\alpha}(1-\delta^{J})}$. By contrast, in this context  and as shown in Proposition \ref{prop:rate},  under the assumptions imposed on \eqref{eq:optim_prob} in the next section  $B^{- \frac{\alpha}{1+\alpha}}$ is the best possible convergence rate for estimating $\theta_\star$.

\subsection{Connection with retrospective approximation}

The proposed $J$-step procedure   can be seen as a retrospective approximation (RA) algorithm with a fixed computational budget and using GD-BLS as deterministic optimizer \citep[see.][for a survey on RA methods]{kim2015guide}. Indeed,  given a sequence $(n'_j)_{j\geq 1}$ in $\mathbb{N}$ and a sequence $(\tau_j')_{j\geq 1}$ in $(0,\infty)$,  RA based on GD-BLS amounts to applying the above  $J$-step procedure with $n_j(B)=n'_j$ and $\tau_j(B)=\tau'_j$ for all $B\in\mathbb{N}$ and $j\in\mathbb{N}$, and with $B=\infty$ (so that the optimization of $F_{n_j'}$ is performed for all $j\in\{1,\dots,J\}$). Unlike in the proposed approach, the computational budget $B_J$ consumed by an RA algorithm is therefore random. The behaviour of $\E[B_J]$ and the convergence rate, as $J\rightarrow\infty$, of the resulting estimator $\theta_{J}$ of $\theta_\star$ have been recently studied by \citet{newton2024retrospective} under general conditions on the optimization routine $\mathcal{R}$  used to optimize $F_{n'_j}$ for all $j\in\{1,\dots,J\}$. However,  the results presented in this   reference are obtained under conditions which seem hard to fulfil if  $F$ is not (globally) $L$-smooth.

\subsection{Outline of the paper}

The rest of the paper is organized as follows. In Section \ref{sec:not_prelim} we state the assumptions we impose on \eqref{eq:optim_prob} and present some preliminary results. The use of  GD-BLS for solving noisy optimization problems is studied in Section \ref{sec:GD1}. In  Section \ref{sec:example} we illustrate the convergence results derived in this work with some numerical experiments and Section  \ref{sec:conclusion} concludes. All the proofs are gathered in the appendix.

\section{Notation, assumptions and some preliminary results\label{sec:not_prelim}}
 
 \subsection{Notation}

To simplify the notation, for all $(\theta,z)\in\R^d\times\setZ$ we use the shorthand $\nabla^k f(\theta,z)$ for $\nabla^k_\theta f(\theta,z)$ for all $k\in\{0,1,2\}$, with the convention that $\nabla^k f(\theta,z)= f(\theta,z)$ when $k=0$. For all $n\in\mathbb{N}$ and compact set $\Theta\subset\R^d$ we let $\hat{\theta}_{\Theta,n}\in\argmin_{\theta\in\Theta}F_n(\theta)$ and we denote by $\lambda_{\min}(M)$ and $\lambda_{\max}(M)$ the smallest and largest eigenvalue of a   $d\times d$ matrix $M$, respectively. To introduce some further notation let $(X_t)_{t\geq 1}$ be a sequence of $\R$-valued random variables. Then, we write $X_t=\smallo_\P(1)$ if and only if $X_t\rightarrow 0$ in $\P$-probability, that is if and only if $\lim_{t\rightarrow\infty}\P(|X_t|\geq \epsilon)=0$ for all $\epsilon>0$, and for a sequence $(a_t)_{t\geq 1}$ in $(0,\infty)$ we write $X_t=\bigO_\P(a_t)$ if and only if for all $\epsilon\in(0,1)$ there exists a finite constant $M_\epsilon$ such that $\limsup_{t\rightarrow\infty}\P\big(|X_t/a_t|\geq M_\epsilon)\leq \epsilon$.

\subsection{Assumptions\label{sub:assumptions}}

We consider the following two assumptions on the optimization problem \eqref{eq:optim_prob}:

\begin{assumption}\label{assumption1}
The function $F$ is locally strongly convex, that is, for every compact set $\Theta\subset\R^d$ there exists a constant $c_\Theta>0$ such that 
$$
F\big(a\theta_1+(1-a)\theta_2\big)\leq a F(\theta_1)+(1-a)F(\theta_2)-\frac{c_\Theta }{2}a(1-a)\|\theta_1-\theta_2\|^2,\quad\forall a\in[0,1],\quad\forall (\theta_1,\theta_2)\in\Theta^2.
$$
\end{assumption}
Remark that under   \ref{assumption1} we have $F\big(a\theta_1+(1-a)\theta_2\big)< a F(\theta_1)+(1-a)F(\theta_2)$ for all $a\in[0,1]$ and all $(\theta_1,\theta_2)\in\R^d\times \R^d$, meaning that the function $F$ is strictly convex on $\R^d$. Consequently, under \ref{assumption1}, the optimization problem \eqref{eq:optim_prob} is well-defined.

\begin{assumption}\label{assumption2}

The function $\theta\mapsto f(\theta,z)$ is  twice continuously differentiable on $\R^d$ for all $z\in \setZ$, and 
$$
\E[\|\nabla^2 f(\theta_\star, Z)\|]<\infty,\quad \E\big[ \|\nabla f(\theta_\star,Z)\|^{1+\alpha}\big]<\infty
$$
for some $\alpha\in(0,1]$. Moreover,  for all compact  set $\Theta\subset\R^d$  there exists a measurable function $M_\Theta:\mathsf{Z}\mapsto [0,\infty)$ such that $\E[M_{\Theta}(Z)]<\infty$ and such that
$$
\|\nabla^2 f(\theta,Z)-\nabla^2 f(\theta',Z)\|\leq M_\Theta(Z)\|\theta-\theta'\|,\quad \forall (\theta,\theta')\in\Theta^2,\quad \P-a.s.
$$
\end{assumption}

These two assumptions are quite weak. In particular, as shown in the following proposition, they are  satisfied for the optimization problem considered in Propositions \ref{prop:Poisson}-\ref{prop:Poisson22}.
\begin{proposition}\label{prop:Poisson3}
 Consider the set-up of Proposition \ref{prop:Poisson}. Then, Assumptions \ref{assumption1}-\ref{assumption2} are satisfied with $\alpha=1$.
\end{proposition}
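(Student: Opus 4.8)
The plan is to exploit the fact that, in this one-dimensional example, every quantity entering Assumptions \ref{assumption1} and \ref{assumption2} can be computed in closed form, which reduces the proof to elementary estimates for the moments of a Poisson random variable. First I would compute $F$ explicitly. Writing $Z=(X,Y)$ and using the independence of $X$ and $Y$ together with $\E[X]=\E[Y]=1$ and the moment generating function $\E[e^{\theta X}]=\exp(e^\theta-1)$ of a Poisson$(1)$ variable, one obtains $F(\theta)=-\theta+\exp(e^\theta-1)$. Differentiating twice yields $F''(\theta)=e^\theta(e^\theta+1)\exp(e^\theta-1)$, which is strictly positive for every $\theta\in\R$, and solving $F'(\theta)=-1+e^\theta\exp(e^\theta-1)=0$ gives $\theta_\star=0$, since with $u=e^{\theta_\star}$ the equation reads $ue^u=e$, whose unique positive solution is $u=1$. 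This identification of $\theta_\star$ is what makes the remaining moment computations explicit.

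For Assumption \ref{assumption1} I would argue as follows. Fix a compact set $\Theta\subset\R$ and let $\widetilde\Theta$ be its convex hull, which is again compact. Since $F\in C^2$ and $F''>0$ everywhere, $c_\Theta:=\min_{\theta\in\widetilde\Theta}F''(\theta)$ is a well-defined strictly positive constant, so the map $\theta\mapsto F(\theta)-\tfrac{c_\Theta}{2}\theta^2$ has a non-negative second derivative on $\widetilde\Theta$ and is therefore convex there. The convexity of this map on $\widetilde\Theta$ is precisely the chord inequality stated in Assumption \ref{assumption1}, valid for all $(\theta_1,\theta_2)\in\Theta^2$ because the intermediate points $a\theta_1+(1-a)\theta_2$ all remain in $\widetilde\Theta$.

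For Assumption \ref{assumption2}, I note first that $\theta\mapsto f(\theta,z)=-z_2z_1\theta+\exp(\theta z_1)$ is $C^\infty$, with $\nabla f(\theta,z)=-z_1z_2+z_1e^{\theta z_1}$ and $\nabla^2 f(\theta,z)=z_1^2 e^{\theta z_1}$. Evaluating at $\theta_\star=0$ gives $\nabla^2 f(\theta_\star,Z)=X^2$ and $\nabla f(\theta_\star,Z)=X(1-Y)$, so $\E[|\nabla^2 f(\theta_\star,Z)|]=\E[X^2]=2<\infty$, and, taking $\alpha=1$ and using independence, $\E[|\nabla f(\theta_\star,Z)|^2]=\E[X^2]\,\E[(1-Y)^2]=2\cdot 1<\infty$. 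It then remains to establish the local Lipschitz bound on $\nabla^2 f$: for a compact $\Theta$ with $R:=\sup_{\theta\in\Theta}|\theta|$, applying the mean value theorem to $\theta\mapsto e^{\theta z_1}$ yields $|\nabla^2 f(\theta,z)-\nabla^2 f(\theta',z)|\le |z_1|^3 e^{R|z_1|}\,|\theta-\theta'|$ for all $\theta,\theta'\in\Theta$, so one may take $M_\Theta(z)=|z_1|^3 e^{R|z_1|}$.

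The only point requiring a little care, and the sole genuine obstacle, is the integrability $\E[M_\Theta(Z)]=\E[X^3 e^{RX}]<\infty$. This follows from the fact that a Poisson variable has finite exponential moments of every order: using the absolute bound $x^3\le C e^{x}$ valid for all $x\ge 0$ (with $C=27e^{-3}$) gives $\E[X^3 e^{RX}]\le C\,\E[e^{(R+1)X}]=C\exp(e^{R+1}-1)<\infty$. One should also check that the mean value point stays in $[-R,R]$, so that the factor $e^{R|z_1|}$ dominates uniformly over $\Theta$; this holds because that point lies between $\theta$ and $\theta'$, both of which belong to $\Theta$. With this, both assumptions are verified with $\alpha=1$.
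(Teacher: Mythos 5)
Your proof is correct and follows essentially the same route as the paper's: explicit computation of $F$, $F'$, $F''$ (so that \ref{assumption1} follows from $F''>0$ and $\theta_\star=0$ is identified), explicit moment computations at $\theta_\star$ giving $\E[|\nabla^2 f(\theta_\star,Z)|]=\E[|\nabla f(\theta_\star,Z)|^2]=2$, and a Lipschitz envelope of the form $M_\Theta(z)=|z_1|^3e^{R|z_1|}$ for $\nabla^2 f(\cdot,z)$ with $\E[M_\Theta(Z)]<\infty$ via the Poisson moment generating function. The only immaterial differences are that you derive the Lipschitz bound from the mean value theorem where the paper uses the inequality $|e^x-1|\le|x|e^{|x|}$, and you prove $\E[X^3e^{RX}]<\infty$ via the elementary bound $x^3\le 27e^{-3}e^x$ where the paper invokes the Cauchy--Schwarz inequality.
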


\subsection{Some implications of Assumptions \ref{assumption1}-\ref{assumption2}\label{sub:prelim_assume} }

The following lemma notably implies  that, under \ref{assumption2}, the random function $\theta\mapsto f(\theta,Z)$ is $\P$-a.s.~locally $L$-smooth.
\begin{lemma}\label{lemma:assume1}
Assume that \ref{assumption2} holds. Then, for all compact set $\Theta\subset\R^d$   there exists a measurable function  $M_\Theta':\mathsf{Z}\mapsto [0,\infty)$   such that $\E[M_\Theta'(Z)]<\infty$ and such that, for all $k\in\{0,1,2\}$, we have
$$
\|\nabla^k f(\theta,Z)-\nabla^k f(\theta',Z)\|\leq M_\Theta'(Z)\|\theta-\theta'\|,\quad \forall (\theta,\theta')\in\Theta^2,\quad \P-a.s.
$$
\end{lemma}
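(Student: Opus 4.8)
The plan is to handle the three values of $k$ separately, exploiting that \ref{assumption2} already delivers the case $k=2$ for free (take $M_\Theta'=M_\Theta$ there) and that the cases $k=1$ and $k=0$ follow from uniform integrable bounds on $\|\nabla^2 f(\cdot,Z)\|$ and $\|\nabla f(\cdot,Z)\|$, respectively, via integration along line segments. Since the statement concerns a fixed compact $\Theta$ and we are free to choose $M_\Theta'$, I would first enlarge $\Theta$ to $\tilde\Theta:=\mathrm{conv}(\Theta\cup\{\theta_\star\})$, which is compact and convex and contains $\theta_\star$. It suffices to prove the three Lipschitz bounds on $\tilde\Theta$, because $\Theta\subseteq\tilde\Theta$; working with a convex set guarantees that the segment $s\mapsto\theta'+s(\theta-\theta')$ stays inside $\tilde\Theta$ for $(\theta,\theta')\in\tilde\Theta^2$ and $s\in[0,1]$, and anchoring at $\theta_\star$ lets the integrability conditions of \ref{assumption2} at $\theta_\star$ propagate.

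Next I would produce a uniform bound on the Hessian norm. Applying \ref{assumption2} to the compact set $\tilde\Theta$ yields an integrable $M_{\tilde\Theta}$ with $\|\nabla^2 f(\theta,Z)-\nabla^2 f(\theta',Z)\|\le M_{\tilde\Theta}(Z)\|\theta-\theta'\|$ on $\tilde\Theta^2$. Setting $D:=\sup_{\theta\in\tilde\Theta}\|\theta-\theta_\star\|<\infty$ and using the triangle inequality anchored at $\theta_\star$ gives $\|\nabla^2 f(\theta,Z)\|\le H(Z):=\|\nabla^2 f(\theta_\star,Z)\|+D\,M_{\tilde\Theta}(Z)$ for all $\theta\in\tilde\Theta$, and $H$ is integrable since $\E[\|\nabla^2 f(\theta_\star,Z)\|]<\infty$ and $\E[M_{\tilde\Theta}(Z)]<\infty$. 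Writing $\nabla f(\theta,Z)-\nabla f(\theta',Z)=\int_0^1\nabla^2 f(\theta'+s(\theta-\theta'),Z)(\theta-\theta')\,\dd s$ then delivers the $k=1$ bound with constant $H(Z)$.

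Repeating the argument one level down produces the $k=0$ bound: the triangle inequality together with the $k=1$ bound gives $\|\nabla f(\theta,Z)\|\le G(Z):=\|\nabla f(\theta_\star,Z)\|+D\,H(Z)$ on $\tilde\Theta$, and $G$ is integrable once I note that $\E[\|\nabla f(\theta_\star,Z)\|]<\infty$ follows from $\E[\|\nabla f(\theta_\star,Z)\|^{1+\alpha}]<\infty$ by Jensen's inequality, since $\P$ is a probability measure and $1+\alpha\ge 1$. Integrating $\nabla f$ along the segment then yields $|f(\theta,Z)-f(\theta',Z)|\le G(Z)\|\theta-\theta'\|$. Finally I would set $M_\Theta'(Z):=\max\{M_{\tilde\Theta}(Z),H(Z),G(Z)\}$, which is integrable (being dominated by the sum of three integrable functions) and simultaneously dominates all three Lipschitz constants, giving the claim for every $k\in\{0,1,2\}$.

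The argument is essentially a bootstrapping from the top derivative downward, and there is no genuinely hard step; the only point requiring care is the integrability bookkeeping. Specifically, one must anchor every bound at $\theta_\star$ so that the two moment conditions in \ref{assumption2} (on $\nabla^2 f(\theta_\star,Z)$ and on $\nabla f(\theta_\star,Z)$) can be invoked, and one must remember to pass from the control of the $(1+\alpha)$-th moment of $\nabla f(\theta_\star,Z)$ to an $L^1$ bound before concluding that $G$ is integrable. The measurability of $H$, $G$ and $M_\Theta'$ is immediate, as they are obtained from measurable functions by finitely many algebraic operations and maxima.
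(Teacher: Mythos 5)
Your proposal is correct and follows essentially the same route as the paper's proof: bootstrap from the Hessian Lipschitz condition of \ref{assumption2} down to $k=1$ and then $k=0$, anchoring each bound at $\theta_\star$ so that the moment conditions $\E[\|\nabla^2 f(\theta_\star,Z)\|]<\infty$ and $\E[\|\nabla f(\theta_\star,Z)\|^{1+\alpha}]<\infty$ can be invoked, and taking suprema of $\|\theta_1-\theta_2\|$ over the compact set to form the integrable dominating function. The only cosmetic differences are that you use the integral (fundamental-theorem) form along segments where the paper invokes a mean-value form of Taylor's theorem, and that you enlarge $\Theta$ explicitly to $\mathrm{conv}(\Theta\cup\{\theta_\star\})$ where the paper implicitly works with a closed ball containing $\theta_\star$; both choices are sound (yours arguably slightly more careful) and neither changes the substance of the argument.
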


Remark that, in the set-up of Lemma \ref{lemma:assume1}, for any $\epsilon>0$ we may have   $\E[M_\Theta'(Z)^{1+\epsilon}]=\infty$. Therefore, under \ref{assumption2}, the quantity $\E\big[\|\nabla f(\theta,Z)\|^{1+\alpha}\big]$ is assumed to be finite for some $\alpha\in(0,1]$ only if $\theta=\theta_\star$. In addition, it is worth noting that no conditions  are imposed on the behaviour of the function $M'_\Theta$ as the size of the set $\Theta$ increases.

The next lemma shows that, under \ref{assumption2}, we can swap the expectation and the differential operators.

\begin{lemma}\label{lemma:assume2}
Assume that  \ref{assumption2} holds. Then, the function $F$ is twice  continuously differentiable on $\R^d$ and 
$$
\nabla^k F(\theta)=\E\big[\nabla^k f(\theta,Z)\big], \quad\forall \theta\in\R^d,\quad\forall k\in\{0,1,2\}.
$$
\end{lemma}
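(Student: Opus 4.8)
The plan is to establish the result by differentiating under the expectation sign, using the dominated convergence theorem with the integrable local Lipschitz constants supplied by Lemma \ref{lemma:assume1}. The case $k=0$ is immediate from the definition $F=\E[f(\cdot,Z)]$, which is real-valued under \ref{assumption1}, so the substance lies in $k\in\{1,2\}$.

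First I would record the integrability needed to make the right-hand sides meaningful. Fix $\theta\in\R^d$ and choose a closed ball $\Theta$ containing both $\theta$ and $\theta_\star$. The triangle inequality together with the $k\in\{1,2\}$ bounds of Lemma \ref{lemma:assume1} gives, $\P$-a.s.,
$$
\|\nabla^k f(\theta,Z)\|\le \|\nabla^k f(\theta_\star,Z)\|+M_\Theta'(Z)\,\|\theta-\theta_\star\|,\qquad k\in\{1,2\}.
$$
Taking expectations and using $\E[M_\Theta'(Z)]<\infty$, the assumed bound $\E[\|\nabla^2 f(\theta_\star,Z)\|]<\infty$, and (via Jensen's inequality) $\E[\|\nabla f(\theta_\star,Z)\|]\le \E[\|\nabla f(\theta_\star,Z)\|^{1+\alpha}]^{1/(1+\alpha)}<\infty$, I conclude that $\E[\|\nabla f(\theta,Z)\|]<\infty$ and $\E[\|\nabla^2 f(\theta,Z)\|]<\infty$ for every $\theta\in\R^d$, so the right-hand sides $\E[\nabla^k f(\theta,Z)]$ are well defined.

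Next comes the differentiation step for $k=1$. Fix $\theta$, a coordinate direction $e_i$, and work inside the closed ball $\Theta$ centred at $\theta$ of radius $1$. For $0<|t|\le 1$ the scalar map $s\mapsto f(\theta+s e_i,Z)$ is $C^1$, so by the mean value theorem the difference quotient $t^{-1}\big(f(\theta+t e_i,Z)-f(\theta,Z)\big)$ equals $\partial_i f(\theta+\xi e_i,Z)$ for some $\xi$ with $|\xi|\le|t|\le 1$, whence $\theta+\xi e_i\in\Theta$. By the $k=1$ bound of Lemma \ref{lemma:assume1}, $|\partial_i f(\theta+\xi e_i,Z)|\le \|\nabla f(\theta,Z)\|+M_\Theta'(Z)$, which is integrable and independent of $t$; dominated convergence then lets me pass $t\to0$ inside the expectation to obtain $\partial_i F(\theta)=\E[\partial_i f(\theta,Z)]$, that is $\nabla F(\theta)=\E[\nabla f(\theta,Z)]$. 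Applying the identical argument with $\nabla f$ in place of $f$ and the $k=2$ bound of Lemma \ref{lemma:assume1} as dominating function yields $\nabla^2 F(\theta)=\E[\nabla^2 f(\theta,Z)]$.

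Finally I would verify continuity of $\nabla^2 F$, giving $F\in C^2$. For $\theta,\theta'$ in a common compact set $\Theta$, Jensen's inequality and the $k=2$ bound of Lemma \ref{lemma:assume1} give
$$
\|\nabla^2 F(\theta)-\nabla^2 F(\theta')\|\le \E\big[\|\nabla^2 f(\theta,Z)-\nabla^2 f(\theta',Z)\|\big]\le \E[M_\Theta'(Z)]\,\|\theta-\theta'\|,
$$
so $\nabla^2 F$ is locally Lipschitz and in particular continuous. I expect the only delicate point to be the construction of the $t$-independent integrable dominating function; this is precisely what the mean value theorem combined with the compact-set Lipschitz bounds of Lemma \ref{lemma:assume1} provides, so the argument should go through cleanly.
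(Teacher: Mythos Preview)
Your proof is correct and follows essentially the same approach as the paper: both use the integrable local Lipschitz bounds from Lemma~\ref{lemma:assume1} to construct a dominating function so that the dominated convergence theorem allows differentiation under the expectation, and both conclude by showing $\nabla^2 F$ is locally Lipschitz via Jensen's inequality. The paper packages the dominating function as a single $G(z)=\sup_{\theta_1,\theta_2\in\Theta}\|\theta_1-\theta_2\|M'_\Theta(z)+\|\nabla f(\theta_\star,z)\|+\|\nabla^2 f(\theta_\star,z)\|$ and is terser about the passage to the limit, whereas you spell out the mean value theorem step explicitly; one minor slip is your attribution of the real-valuedness of $F$ to \ref{assumption1} (which concerns convexity), but this is part of the paper's standing setup and does not affect the argument.
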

By combining Lemma \ref{lemma:assume1} and Lemma \ref{lemma:assume2}  it follows   that under \ref{assumption2} the function $F$ is locally $L$-smooth. More precisely, under \ref{assumption2}, for any   compact set $\Theta\subset\R^d$ we have
$$
\|\nabla F(\theta)-\nabla F(\theta')\|\leq \E[M'_\Theta(Z)]\|\theta-\theta'\|,\quad\forall (\theta,\theta')\in\Theta^2
$$
with the function $M'_\Theta$ as in Lemma \ref{lemma:assume1}. We stress that, unlike in \citet{patel2022global},  no conditions  are imposed on the behaviour of the Lipschitz constant $\E[M'_\Theta(Z)]$ as the size of the set $\Theta$ increases.

The following lemma shows that the random function $F_n$  converges uniformly to $F$ on any compact subset of $\R^d$.
\begin{lemma}\label{lemma:uniform_conv}
Assume that  \ref{assumption2} holds. Then, for all   compact set $\Theta\subset\R^d$ we have
$$
\lim_{n\rightarrow\infty}\sup_{\theta\in\Theta}\big\|\nabla^k F_n(\theta)- \nabla^k F(\theta)\big\|=0,\quad \P-a.s.\quad \forall k\in\{0,1,2\}.
$$
\end{lemma}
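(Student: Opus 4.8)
The plan is to prove a uniform strong law of large numbers for the i.i.d.\ array $\nabla^k f(\cdot,Z_i)$, deriving it from the \emph{pointwise} SLLN together with the $\P$-a.s.\ local Lipschitz continuity supplied by Lemma \ref{lemma:assume1}, via a standard finite-covering (equicontinuity) argument. Throughout I fix $k\in\{0,1,2\}$ and a compact set $\Theta\subset\R^d$, and, since proving the claim on a larger compact set implies it on a smaller one, I enlarge $\Theta$ if necessary so that $\theta_\star\in\Theta$. Writing $\nabla^k F_n(\theta)=\frac1n\sum_{i=1}^n\nabla^k f(\theta,Z_i)$ and recalling from Lemma \ref{lemma:assume2} that $\nabla^k F(\theta)=\E[\nabla^k f(\theta,Z)]$, the object to control is $\Delta_n:=\sup_{\theta\in\Theta}\|\nabla^k F_n(\theta)-\nabla^k F(\theta)\|$.

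First I would verify the integrability needed to invoke the SLLN. By Lemma \ref{lemma:assume1}, for every $\theta\in\Theta$ we have $\|\nabla^k f(\theta,Z)\|\le\|\nabla^k f(\theta_\star,Z)\|+M'_\Theta(Z)\,\mathrm{diam}(\Theta)$ $\P$-a.s.; since $\E[M'_\Theta(Z)]<\infty$ and $\E[\|\nabla^k f(\theta_\star,Z)\|]<\infty$ (for $k=2$ directly from \ref{assumption2}, for $k=1$ because a finite $(1+\alpha)$-moment controls the first moment, and for $k=0$ because $F(\theta_\star)=\E[f(\theta_\star,Z)]$ is finite by the definition of $F$ in \eqref{eq:optim_prob}), it follows that $\E[\|\nabla^k f(\theta,Z)\|]<\infty$ for every $\theta\in\Theta$. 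Applying the SLLN componentwise then gives, for each fixed $\theta$, the pointwise convergence $\nabla^k F_n(\theta)\to\nabla^k F(\theta)$ $\P$-a.s.

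The upgrade to uniformity is the heart of the argument. Fix $r>0$ and cover $\Theta$ by finitely many balls of radius $r$ centred at $\theta_1,\dots,\theta_{m(r)}\in\Theta$. For $\theta$ in the ball around $\theta_j$ the triangle inequality yields
$$
\|\nabla^k F_n(\theta)-\nabla^k F(\theta)\|\le\underbrace{\|\nabla^k F_n(\theta)-\nabla^k F_n(\theta_j)\|}_{(\mathrm{I})}+\underbrace{\|\nabla^k F_n(\theta_j)-\nabla^k F(\theta_j)\|}_{(\mathrm{II})}+\underbrace{\|\nabla^k F(\theta_j)-\nabla^k F(\theta)\|}_{(\mathrm{III})}.
$$
For term $(\mathrm{I})$, Lemma \ref{lemma:assume1} gives $(\mathrm{I})\le\frac1n\sum_{i=1}^n M'_\Theta(Z_i)\,\|\theta-\theta_j\|\le r\,\frac1n\sum_{i=1}^n M'_\Theta(Z_i)$, and the SLLN applied to $M'_\Theta(Z_i)$ gives $\frac1n\sum_{i=1}^n M'_\Theta(Z_i)\to\E[M'_\Theta(Z)]$ $\P$-a.s.; for term $(\mathrm{III})$, combining Lemma \ref{lemma:assume2} with Lemma \ref{lemma:assume1} exactly as noted for $k=1$ after Lemma \ref{lemma:assume2} yields $(\mathrm{III})\le r\,\E[M'_\Theta(Z)]$; and term $(\mathrm{II})$, a maximum over the finitely many points $\theta_1,\dots,\theta_{m(r)}$, tends to $0$ $\P$-a.s.\ by the pointwise result. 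Taking the supremum over $\theta$ and then the $\limsup$ in $n$, I obtain $\limsup_n\Delta_n\le 2r\,\E[M'_\Theta(Z)]$ on a $\P$-a.s.\ event.

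The final step sends $r\to0$ while controlling the exceptional sets: running the above with $r=1/\ell$ for each $\ell\in\mathbb{N}$ produces a $\P$-null set $N_\ell$ off which $\limsup_n\Delta_n\le(2/\ell)\,\E[M'_\Theta(Z)]$, and on the complement of the countable union $\bigcup_\ell N_\ell$ one has $\limsup_n\Delta_n=0$, which is the claim. I expect the only real difficulty to be organisational rather than analytic: because $M'_\Theta(Z)$ is assumed integrable but may have no higher moment (as stressed in the remark following Lemma \ref{lemma:assume1}), every estimate must be phrased using first moments only — in particular the bound on $(\mathrm{I})$ must rest on the SLLN for $M'_\Theta(Z_i)$ rather than on any uniform-integrability argument — and one must take care that, for each fixed $r$, the covering centres $\theta_1,\dots,\theta_{m(r)}$ are chosen before, and hence independently of, the countably many null sets arising from the pointwise SLLN.
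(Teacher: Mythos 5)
Your proof is correct and follows essentially the same route as the paper's: a pointwise strong law of large numbers at each $\theta$, combined with the $\P$-a.s.\ Lipschitz bound $\frac1n\sum_{i=1}^n M'_\Theta(Z_i)\|\theta-\theta'\|$ from Lemma \ref{lemma:assume1}, the SLLN for $M'_\Theta(Z_i)$, and the (uniform/Lipschitz) continuity of $\nabla^k F$ on $\Theta$, upgraded to uniformity by compactness. In fact, your finite-covering argument with $r=1/\ell$ is precisely the step the paper compresses into ``it is readily checked that,'' so your write-up is a faithful, more detailed version of the same proof.
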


The next lemma provides a useful result on the asymptotic behaviour of the largest and   smallest eigenvalues of the random matrix  $\nabla^2 F_n(\theta)$.

\begin{lemma}\label{lemma:e-value}
Assume that  \ref{assumption2} holds. Then, for all  compact set $\Theta\subset\R^d$  there exists a $\theta_\Theta\in\Theta$ such that
$$
\liminf_{n\rightarrow\infty}\Big(\inf_{\theta\in\Theta}\lambda_{\min}\big(\nabla^2 F_n(\theta)\big)\Big)\geq \lambda_{\min}\Big(\E\big[\nabla^2 f(\theta_\Theta,Z)\big]\Big),\quad\P-a.s.
$$
and a  $\theta'_\Theta\in\Theta$ such that
$$
\limsup_{n\rightarrow\infty}\Big(\sup_{\theta\in\Theta}\lambda_{\max}\big(\nabla^2 F_n(\theta)\big)\Big)\leq \lambda_{\max}\Big(\E\big[\nabla^2 f(\theta'_\Theta,Z)\big]\Big),\quad\P-a.s.
$$
\end{lemma}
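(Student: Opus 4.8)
The plan is to deduce both statements from the almost sure uniform convergence of the Hessians provided by Lemma \ref{lemma:uniform_conv} (with $k=2$), combined with the classical fact that each eigenvalue of a symmetric matrix is $1$-Lipschitz with respect to the spectral norm (Weyl's inequality). Since $\theta\mapsto f(\theta,z)$ is twice continuously differentiable under \ref{assumption2}, both $\nabla^2 F_n(\theta)$ and $\nabla^2 F(\theta)$ are symmetric matrices, and Lemma \ref{lemma:assume2} guarantees that $\nabla^2 F$ is continuous with $\nabla^2 F(\theta)=\E[\nabla^2 f(\theta,Z)]$ for every $\theta\in\R^d$. These are essentially the only ingredients required, so the result reduces to a deterministic perturbation-and-continuity argument applied pathwise.

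First I would fix a compact set $\Theta$ and work on the probability-one event on which $\varepsilon_n:=\sup_{\theta\in\Theta}\|\nabla^2 F_n(\theta)-\nabla^2 F(\theta)\|\to 0$, whose existence is exactly the content of Lemma \ref{lemma:uniform_conv}. By Weyl's inequality, for every $\theta\in\Theta$ and every $n$ we have $\lambda_{\min}(\nabla^2 F_n(\theta))\geq \lambda_{\min}(\nabla^2 F(\theta))-\|\nabla^2 F_n(\theta)-\nabla^2 F(\theta)\|\geq \lambda_{\min}(\nabla^2 F(\theta))-\varepsilon_n$. Taking the infimum over $\theta\in\Theta$ and then the $\liminf$ in $n$ (using $\varepsilon_n\to 0$) yields $\liminf_{n\to\infty}\inf_{\theta\in\Theta}\lambda_{\min}(\nabla^2 F_n(\theta))\geq \inf_{\theta\in\Theta}\lambda_{\min}(\nabla^2 F(\theta))$.

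It then remains to identify the right-hand side. The map $\theta\mapsto \lambda_{\min}(\nabla^2 F(\theta))$ is continuous, being the composition of the continuous map $\theta\mapsto\nabla^2 F(\theta)$ (Lemma \ref{lemma:assume2}) with the continuous smallest-eigenvalue functional on symmetric matrices. Since $\Theta$ is compact, this infimum is attained at some $\theta_\Theta\in\Theta$, and a final appeal to Lemma \ref{lemma:assume2} gives $\inf_{\theta\in\Theta}\lambda_{\min}(\nabla^2 F(\theta))=\lambda_{\min}(\nabla^2 F(\theta_\Theta))=\lambda_{\min}(\E[\nabla^2 f(\theta_\Theta,Z)])$, which is the first claim. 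The second claim follows from the symmetric argument: Weyl's inequality gives $\lambda_{\max}(\nabla^2 F_n(\theta))\leq \lambda_{\max}(\nabla^2 F(\theta))+\varepsilon_n$, and taking $\sup$ over $\theta\in\Theta$ followed by $\limsup$ in $n$ reduces matters to $\sup_{\theta\in\Theta}\lambda_{\max}(\nabla^2 F(\theta))$, which is attained at some $\theta'_\Theta\in\Theta$ by the same continuity-plus-compactness reasoning.

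I do not expect a genuine obstacle in this proof; it is a deterministic perturbation estimate wrapped around the a.s.\ uniform convergence lemma. The only points demanding minor care are recording that the Hessians are genuinely symmetric so that Weyl's inequality applies, and noting that the interchange of $\inf_{\theta\in\Theta}$ with $\liminf_{n\to\infty}$ (and of $\sup_{\theta\in\Theta}$ with $\limsup_{n\to\infty}$) produces an inequality only in the favourable direction, which is precisely what the statement asserts.
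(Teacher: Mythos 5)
Your proof is correct and follows essentially the same route as the paper's: both rest on the a.s.\ uniform convergence of $\nabla^2 F_n$ to $\nabla^2 F$ from Lemma \ref{lemma:uniform_conv}, an eigenvalue perturbation bound (the paper derives $\lambda_{\min}(\nabla^2 F_n(\theta))\geq \lambda_{\min}(\nabla^2 F(\theta))-\|\nabla^2 F_n(\theta)-\nabla^2 F(\theta)\|$ directly from the min-max theorem, which is exactly your Weyl inequality), and continuity of $\theta\mapsto\lambda_{\min}(\nabla^2 F(\theta))$ plus compactness of $\Theta$ to attain the infimum at some $\theta_\Theta$ (the paper invokes the maximum theorem where you invoke Lipschitz continuity of the eigenvalue functional, a negligible difference).
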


Under \ref{assumption1} and by combining Lemma \ref{lemma:assume2} and Lemma \ref{lemma:e-value}, we readily obtain the following result which notably shows that if   $\Theta\subset\R^d$ is a   compact and convex set then the random function $F_n$ is strictly convex on $\Theta$ with $\P$-probability tending to one.

\begin{corollary}\label{cor:convex}
Assume that \ref{assumption1}-\ref{assumption2} hold. Then, for all  compact   set $\Theta\subset\R^d$  there exists a constant $c_\Theta\in(0,1)$ such that
\begin{align*}
\liminf_{n\rightarrow\infty}\Big(\inf_{\theta\in\Theta}\lambda_{\min}\big(\nabla^2 F_n(\theta)\big)\Big)\geq c_\Theta,\quad \limsup_{n\rightarrow\infty}\Big(\sup_{\theta\in\Theta}\lambda_{\max}\big(\nabla^2 F_n(\theta)\big)\Big)\leq\frac{1}{c_\Theta},\quad\P-a.s.
\end{align*}
\end{corollary}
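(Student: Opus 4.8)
The plan is to assemble the three ingredients the surrounding discussion already flags: Assumption \ref{assumption1}, Lemma \ref{lemma:assume2}, and Lemma \ref{lemma:e-value}. First I would invoke Lemma \ref{lemma:e-value} to obtain points $\theta_\Theta,\theta'_\Theta\in\Theta$ for which the $\liminf$ and $\limsup$ appearing in the statement are bounded, respectively from below and from above, by $\lambda_{\min}\big(\E[\nabla^2 f(\theta_\Theta,Z)]\big)$ and $\lambda_{\max}\big(\E[\nabla^2 f(\theta'_\Theta,Z)]\big)$, $\P$-almost surely. By Lemma \ref{lemma:assume2} these expectations equal $\nabla^2 F(\theta_\Theta)$ and $\nabla^2 F(\theta'_\Theta)$, so the task reduces to producing a single constant $c_\Theta\in(0,1)$ such that $\lambda_{\min}\big(\nabla^2 F(\theta_\Theta)\big)\geq c_\Theta$ and $\lambda_{\max}\big(\nabla^2 F(\theta'_\Theta)\big)\leq 1/c_\Theta$.

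For the upper bound the argument is immediate: since $F$ is twice continuously differentiable by Lemma \ref{lemma:assume2}, the map $\theta\mapsto\lambda_{\max}\big(\nabla^2 F(\theta)\big)$ is continuous and therefore attains a finite maximum $L_\Theta:=\sup_{\theta\in\Theta}\lambda_{\max}\big(\nabla^2 F(\theta)\big)<\infty$ on the compact set $\Theta$, whence $\lambda_{\max}\big(\nabla^2 F(\theta'_\Theta)\big)\leq L_\Theta$. (The same finiteness follows alternatively from the local $L$-smoothness of $F$ noted just after Lemma \ref{lemma:assume2}, which yields $\|\nabla^2 F(\theta)\|\leq\E[M'_\Theta(Z)]$ on $\Theta$.)

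The one step requiring a little care is the lower bound, namely extracting a strictly positive lower bound on $\lambda_{\min}\big(\nabla^2 F(\theta_\Theta)\big)$ from Assumption \ref{assumption1}, because $\Theta$ need not be convex and $\theta_\Theta$ may sit on its boundary. To handle this I would enlarge $\Theta$ to a compact convex set $\tilde\Theta\supseteq\Theta$ (for instance a closed ball containing $\Theta$, which exists as $\Theta$ is bounded) and apply Assumption \ref{assumption1} to $\tilde\Theta$ to obtain its strong convexity constant $c_{\tilde\Theta}>0$. On the convex set $\tilde\Theta$ the standard second-order argument, taking $a=1/2$ with the symmetric pair $\theta\pm tv$, dividing by $t^2$ and letting $t\to 0$ while using that $F$ is $C^2$, converts the strong convexity inequality into $v^\top\nabla^2 F(\theta)v\geq c_{\tilde\Theta}\|v\|^2$ for every $\theta\in\tilde\Theta$ and every $v\in\R^d$, so that $\nabla^2 F(\theta)\succeq c_{\tilde\Theta}I$ throughout $\tilde\Theta\supseteq\Theta$; boundary points of $\Theta$ are covered since $\tilde\Theta$ has nonempty interior and $\nabla^2 F$ is continuous. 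In particular $\lambda_{\min}\big(\nabla^2 F(\theta_\Theta)\big)\geq c_{\tilde\Theta}$.

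Finally I would set $c_\Theta:=\min\{c_{\tilde\Theta},\,1/L_\Theta,\,1/2\}$, which lies in $(0,1)$ (note $L_\Theta\geq c_{\tilde\Theta}>0$, so $1/L_\Theta$ is well defined) and simultaneously satisfies $c_\Theta\leq\lambda_{\min}\big(\nabla^2 F(\theta_\Theta)\big)$ and $1/c_\Theta\geq L_\Theta\geq\lambda_{\max}\big(\nabla^2 F(\theta'_\Theta)\big)$. Chaining these with the two almost-sure displays from Lemma \ref{lemma:e-value} delivers both claimed inequalities. The only genuine obstacle is the possible non-convexity of $\Theta$ in the lower-bound step, which the passage to a convex superset $\tilde\Theta$ resolves cleanly; everything else is a direct substitution using the cited lemmas, which is why the statement can be said to follow \emph{readily}.
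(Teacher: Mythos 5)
Your proposal is correct and follows exactly the route the paper intends: the paper gives no separate proof of Corollary \ref{cor:convex}, asserting that it is ``readily obtained'' by combining Assumption \ref{assumption1} with Lemmas \ref{lemma:assume2} and \ref{lemma:e-value}, which is precisely the combination you carry out. Your additional details --- passing to a compact convex superset $\tilde\Theta$ to extract the positive lower bound on $\lambda_{\min}\big(\nabla^2 F\big)$ from local strong convexity, and using continuity of $\nabla^2 F$ on the compact set for the finite upper bound --- are exactly the routine verifications the paper leaves implicit, and they are handled correctly.
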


The following lemma  establishes that,  under \ref{assumption1} and \ref{assumption2}, by using the first $n$ elements of $(Z_i)_{i\geq 1}$ we can learn $\theta_\star$  at rate  $n^{-\frac{\alpha}{1+\alpha}}$, with $\alpha\in (0,1]$ as in \ref{assumption2}. In addition to be used to establish our main results, this lemma will allow us to discuss    the convergence rate of the proposed estimators.

\begin{lemma}\label{lemma:MLE}
Assume that \ref{assumption1}-\ref{assumption2} hold and let $\Theta\subset\R^d$ be a  compact and convex   set such that $\theta_\star\in\mathring{\Theta}$. Then, $\|\theta_\star-\hat{\theta}_{\Theta,n}\|=\bigO_\P(n^{-\frac{\alpha}{1+\alpha}})$, with $\alpha\in  (0,1]$ as in \ref{assumption2}.
\end{lemma}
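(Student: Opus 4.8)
The plan is to reduce the estimation error $\|\theta_\star-\hat{\theta}_{\Theta,n}\|$ to the magnitude of the random gradient $\nabla F_n(\theta_\star)$ at the true optimum, and then to control that gradient by a moment inequality tailored to the $(1+\alpha)$-moment granted by \ref{assumption2}. The first ingredient is consistency, $\hat{\theta}_{\Theta,n}\to\theta_\star$ $\P$-a.s. Under \ref{assumption1} the function $F$ is strictly convex with unique global minimiser $\theta_\star$, and since $\theta_\star\in\mathring{\Theta}\subset\Theta$ with $\Theta$ compact, $\theta_\star$ is the unique, hence well-separated, minimiser of $F$ over $\Theta$, i.e.\ $\inf\{F(\theta):\theta\in\Theta,\ \|\theta-\theta_\star\|\ge\epsilon\}>F(\theta_\star)$ for every $\epsilon>0$. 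By Lemma \ref{lemma:uniform_conv} with $k=0$ we have $F_n\to F$ uniformly on $\Theta$, $\P$-a.s., so the standard argmin-consistency argument yields $\hat{\theta}_{\Theta,n}\to\theta_\star$, $\P$-a.s. Because $\theta_\star$ is interior, it follows that $\P$-a.s.\ $\hat{\theta}_{\Theta,n}\in\mathring{\Theta}$ for all $n$ large enough, whence the first-order optimality condition $\nabla F_n(\hat{\theta}_{\Theta,n})=0$.

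Next I would exploit the asymptotic strong convexity of $F_n$. By Corollary \ref{cor:convex} there is a constant $c\in(0,1)$ such that, $\P$-a.s., $\inf_{\theta\in\Theta}\lambda_{\min}\big(\nabla^2 F_n(\theta)\big)\ge c$ for all $n$ large enough. Since $\Theta$ is convex the segment joining $\theta_\star$ and $\hat{\theta}_{\Theta,n}$ stays in $\Theta$, so writing $\nabla F_n(\theta_\star)-\nabla F_n(\hat{\theta}_{\Theta,n})=\big(\int_0^1\nabla^2 F_n(\hat{\theta}_{\Theta,n}+t(\theta_\star-\hat{\theta}_{\Theta,n}))\,\dd t\big)(\theta_\star-\hat{\theta}_{\Theta,n})$ and taking the inner product with $\theta_\star-\hat{\theta}_{\Theta,n}$ gives, using $\nabla F_n(\hat{\theta}_{\Theta,n})=0$, the inequality $\langle\nabla F_n(\theta_\star),\theta_\star-\hat{\theta}_{\Theta,n}\rangle\ge c\,\|\theta_\star-\hat{\theta}_{\Theta,n}\|^2$. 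By Cauchy--Schwarz this collapses to $\|\theta_\star-\hat{\theta}_{\Theta,n}\|\le c^{-1}\|\nabla F_n(\theta_\star)\|$, valid $\P$-a.s.\ for all $n$ large enough.

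It then remains to bound $\|\nabla F_n(\theta_\star)\|$. By Lemma \ref{lemma:assume2} we have $\nabla F(\theta_\star)=\E[\nabla f(\theta_\star,Z)]=0$, so $\nabla F_n(\theta_\star)=\frac1n\sum_{i=1}^n\nabla f(\theta_\star,Z_i)$ is the empirical mean of i.i.d.\ centred random vectors whose $(1+\alpha)$-moment is finite by \ref{assumption2}. Applying the von Bahr--Esseen inequality coordinatewise with exponent $r=1+\alpha\in(1,2]$ and recombining via $\|v\|^{1+\alpha}\le d^{\alpha}\sum_{j=1}^d|v_j|^{1+\alpha}$ yields $\E\big[\|\nabla F_n(\theta_\star)\|^{1+\alpha}\big]\le C n^{-\alpha}$ for some finite constant $C$; Markov's inequality at the threshold $M n^{-\alpha/(1+\alpha)}$ then gives $\|\nabla F_n(\theta_\star)\|=\bigO_\P(n^{-\alpha/(1+\alpha)})$. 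Combining this with the strong-convexity bound of the previous step proves the claim, the almost-sure-eventually events (interior membership and strong convexity) being spliced with the $\bigO_\P$ gradient bound in the routine way, as the former have $\P$-probability tending to one.

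The main obstacle is the heavy-tailed regime $\alpha<1$, where only a $(1+\alpha)$-moment of the noisy gradient is available, so that the usual $L^2$/Chebyshev or CLT arguments fail; the von Bahr--Esseen inequality is precisely the device that delivers the sharp $n^{-\alpha/(1+\alpha)}$ rate uniformly over $\alpha\in(0,1]$ (reducing to the familiar $n^{-1/2}$ rate at $\alpha=1$). A secondary, purely technical point is that the strong convexity and interior-minimiser properties hold only for $n$ beyond a random threshold, which is why the concluding argument must combine almost-sure-eventually events with an in-probability bound rather than work deterministically.
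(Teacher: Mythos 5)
Your proposal is correct and follows essentially the same route as the paper: both reduce $\|\theta_\star-\hat{\theta}_{\Theta,n}\|$ to $\lambda^{-1}\|\nabla F_n(\theta_\star)\|$ via the eventual uniform strong convexity of $F_n$ on $\Theta$ (Corollary \ref{cor:convex}) together with the interior first-order condition $\nabla F_n(\hat{\theta}_{\Theta,n})=0$, and then control $\|\nabla F_n(\theta_\star)\|$ by a heavy-tail concentration bound of order $n^{-\alpha/(1+\alpha)}$. The only cosmetic differences are that the paper invokes a deviation inequality from \citet{bubeck2013bandits} where you derive the equivalent bound from von Bahr--Esseen plus Markov, and that you use the integral form of Taylor's theorem with Cauchy--Schwarz where the paper inverts the Hessian at an intermediate point.
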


Finally, the following proposition  shows that, under   \ref{assumption1}-\ref{assumption2} and in the context of this work (i.e.~noisy optimization under fixed computations budget) the best possible convergence rate for an estimator of $\theta_\star$ is $B^{-\frac{\alpha}{1+\alpha}}$, with $\alpha\in(0,1]$ as in \ref{assumption2}.

\begin{proposition}\label{prop:rate}
Let $\alpha'\in (0,1]$. Then, for any sequence  $(\hat{\theta}_B)_{B\geq 1}$  of estimators of $\theta_\star$ such that, for all $B\in\mathbb{N}$,  $\hat{\theta}_B$ depends on $(Z_i)_{i\geq 1}$ only through $k_B\in\mathbb{N}_0$ evaluations of $f(\cdot,\cdot)$ and $k_B'\in\mathbb{N}_0$ evaluations of $\nabla f(\cdot,\cdot)$, where $k_B C_{\mathrm{eval}}+k_B'C_{\mathrm{grad}}\leq B$, there is a noisy optimization problem \eqref{eq:optim_prob} such that   \ref{assumption1}-\ref{assumption2} hold with $\alpha=\alpha'$ and such that,   for some constants $\delta\in(0,1)$ and $C\in(0,\infty)$, we have
\begin{align*}
\limsup_{B\rightarrow\infty}\P\Big(\|\hat{\theta}_{B}-\theta_\star\|> C B^{-\frac{\alpha}{1+\alpha}}\Big)\geq \delta.
\end{align*}
\end{proposition}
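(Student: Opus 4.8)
I would first reduce to the scalar case $d=1$ and to an $f$ for which the optimum is a population mean. Take $\setZ=\R$ and $f(\theta,z)=\tfrac12\theta^2-z\theta$, so that $F(\theta)=\tfrac12\theta^2-\mu\theta$ with $\mu:=\E[Z]$ and $\theta_\star=\mu$. Here $\nabla^2 f\equiv 1$, so \ref{assumption1} holds with $c_\Theta=1$, the Hessian is trivially Lipschitz, $\E[\|\nabla^2 f(\theta_\star,Z)\|]=1<\infty$, and $\E[|\nabla f(\theta_\star,Z)|^{1+\alpha}]=\E[|Z-\mu|^{1+\alpha}]$, which is finite precisely when the law of $Z$ has a finite $(1+\alpha)$-th central moment; thus \ref{assumption2} holds with $\alpha=\alpha'$ for any such law. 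Note that one evaluation of $f$ or of $\nabla f$ at any $\theta\neq0$ determines $Z_i$ exactly, so under the budget constraint $\hat\theta_B$ is a (possibly adaptive, randomized) measurable function of at most $B$ i.i.d.\ copies of $Z$ (since $C_{\mathrm{eval}},C_{\mathrm{grad}}\ge1$). It therefore suffices to lower bound, for the given estimator, the estimation of $\mu$ from at most $B$ samples of a law with a finite $(1+\alpha)$-th central moment.

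\textbf{Two-point bound at a fixed budget.} At a given budget $B$ I would apply Le Cam's two-point method to two candidate laws for $Z$: a null $P_0=\delta_0$ and a spiked alternative $P_1=(1-p)\delta_0+p\,\delta_a$, tuned with $p\asymp 1/B$ and $a\asymp B^{1/(1+\alpha)}$. Then the means differ by $pa\asymp B^{-\alpha/(1+\alpha)}$; the total variation between the $B$-fold products obeys $\mathrm{TV}(P_0^{\otimes B},P_1^{\otimes B})\le Bp\asymp 1$, which can be kept bounded below $1$ by the choice of constants; and the $(1+\alpha)$-th central moments are controlled by $pa^{1+\alpha}\asymp 1$. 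Le Cam's inequality then gives $\max_{j\in\{0,1\}}\P_{P_j}\big(|\hat\theta_B-\mu_j|>\tfrac12 pa\big)\ge\tfrac12\big(1-\mathrm{TV}\big)\ge c_0>0$, i.e.\ a per-budget minimax lower bound at the rate $B^{-\alpha/(1+\alpha)}$.

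\textbf{Passage to a single instance along the limsup.} The statement asks for one fixed problem (hence one fixed law of $Z$) that is hard at infinitely many budgets, whereas $(P_0,P_1)$ above depends on $B$. I would exploit the quantifier order (the instance is chosen after the estimator) and build the law of $Z$ by a multiscale, diagonal argument along a rapidly increasing sequence $B_1\ll B_2\ll\cdots$ (say $B_{k+1}\ge B_k^2$). At stage $k$ I commit the local structure of $Z$ near the scale $a_k\asymp B_k^{1/(1+\alpha)}$ to whichever branch (spike vs.\ no spike, masses $p_k\asymp 1/B_k$) the estimator \emph{fails} on at budget $B_k$. The rapid growth of $B_k$ decouples the scales: future spikes are hit with probability $o(1)$ in $B_k$ draws, and past spikes enter both branches identically and so cancel in the total variation, so at budget $B_k$ the constructed law reproduces the committed (failing) branch up to an $o(1)$ error. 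This would yield $\limsup_{B}\P\big(|\hat\theta_B-\theta_\star|> C B^{-\alpha/(1+\alpha)}\big)\ge c_0-o(1)=:\delta>0$, with $\theta_\star$ the mean of the single limiting law.

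\textbf{Main obstacle.} The delicate point, and the step I expect to cost the most, is reconciling failure at \emph{infinitely many} scales with the requirement in \ref{assumption2} that the single constructed law have a \emph{finite} $(1+\alpha)$-th central moment: each spike actually included contributes an order-one amount $p_k a_k^{1+\alpha}\asymp 1$ to that moment, so a naive sum over scales diverges. Resolving this requires arranging the commitments so that the costly (spiked) branch is selected only on a sufficiently sparse set of scales with a summable mass/moment budget — e.g.\ committing to the cheap, spike-free branch whenever the estimator already fails there, placing the remaining spikes at geometrically spaced scales, and absorbing the induced drift of $\theta_\star$ — while still certifying failure infinitely often; making this bookkeeping rigorous is the crux of the argument. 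As a sanity check I would record the clean special case $\alpha=1$, where a single fixed instance already suffices (take $Z$ Gaussian, for which the two-point/Cram\'er--Rao bound directly forces the $B^{-1/2}$ rate at every budget), confirming that the multiscale machinery is only needed to handle the genuinely heavy-tailed regime $\alpha<1$.
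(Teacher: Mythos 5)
Your reduction to scalar mean estimation is exactly the paper's (it takes $f(\theta,z)=\theta^2-2\theta z$, you take $\tfrac12\theta^2-z\theta$), and your per-budget two-point bound is correct: with $p\asymp 1/B$, $a\asymp B^{1/(1+\alpha)}$ the means separate by $\asymp B^{-\alpha/(1+\alpha)}$, the total variation of the $B$-fold products stays bounded away from $1$, the $(1+\alpha)$-th moments stay bounded, and Le Cam gives a minimax lower bound at each fixed $B$. The gap is your third step, and it is genuine: the paper closes precisely this step by invoking the proof of Theorem 3.1 of Devroye et al., whereas you attempt an explicit multiscale diagonalization and yourself flag its moment bookkeeping as unresolved. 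Unfortunately the problem is not bookkeeping: within the quadratic reduction the step \emph{cannot} be completed when $\alpha'<1$. Consider the fixed oracle strategy that spends its whole budget querying $\nabla f(0,Z_i)$ for $i\le\lfloor B/C_{\mathrm{grad}}\rfloor$ and outputs the sign-corrected average; for your $f$ this is exactly the empirical mean $\bar Z_{n_B}$ with $n_B\asymp B$. By the Marcinkiewicz--Zygmund strong law, for \emph{every} law with $\E[|Z-\mu|^{1+\alpha}]<\infty$ and $\alpha<1$ one has $|\bar Z_n-\mu|=o\big(n^{-\frac{\alpha}{1+\alpha}}\big)$ almost surely, hence $\limsup_{B\rightarrow\infty}\P\big(|\hat\theta_B-\theta_\star|>CB^{-\frac{\alpha}{1+\alpha}}\big)=0$ for every $C>0$ and every fixed admissible $P$. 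So against this particular estimator no single law, multiscale or otherwise, can witness the claimed bound: you are attempting to swap $\sup_P$ and $\limsup_B$, and with quadratic $f$ that swap is false for $\alpha'<1$. This is exactly why, as you computed, every ``active'' scale is forced to cost $\Theta(1)$ of moment, and no sparsification scheme can help: against the empirical mean the spike-free branch eventually never fails (quantitatively, an old spike $(p_j,a_j)$ contributes fluctuation of order $(B_j/n)^{\frac{1-\alpha}{2(1+\alpha)}}n^{-\frac{\alpha}{1+\alpha}}$ at sample size $n\gg B_j$), so your commitment rule must select the spiked branch at all large scales, and the moment diverges.

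Two further points. First, your $\alpha=1$ sanity check is also incorrect as stated: for a \emph{fixed} Gaussian the constant estimator $\hat\theta_B\equiv\mu$ (zero queries, trivially within budget) beats every rate, so no single pre-specified instance is hard for all estimators even in the finite-variance case; the instance must be chosen after the estimator, via the classical almost-everywhere superefficiency / local asymptotic minimax arguments (the reason $\alpha=1$ is genuinely different is that Marcinkiewicz--Zygmund requires $p<2$, and for $p=2$ the CLT/LIL give a positive limsup for sample-mean-type estimators). Second, on the comparison with the paper: the paper's proof is your step 1 plus a citation that is asserted to cover your step 3, so your blind attempt stalls exactly where the paper outsources; the obstruction above indicates that any self-contained argument for $\alpha'<1$ must do something materially different from ``fixed quadratic $f$ plus one hard heavy-tailed law'' --- for instance exploiting that the estimator in Proposition \ref{prop:rate} does not know the problem, i.e.\ letting the adversary vary $f$ and not only $P$, or settling for a per-budget minimax formulation of the lower bound.
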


\subsection{Gradient descent with backtracking line search\label{sub:prelim_gd}}

Algorithm \ref{algo:GD} provides a generic description  of a gradient descent algorithm that can be used to minimize a differentiable function $g:\R^d\rightarrow\R$. The convergence behaviour of Algorithm \ref{algo:GD} depends  on the value of the step-size $v_t$ chosen on its Line \ref{SS}, and our theoretical analysis of gradient descent applied to  noisy optimization problems builds on the following  result.

\begin{proposition}\label{prop:GD_result_0}
Let $g:\R^d\rightarrow\R$ be a  differentiable function,    $x_0\in\R^d$,  and  for all $T\in\mathbb{N}$  let $x_T$ be as computed by Algorithm \ref{algo:GD} with $(v_t)_{t\geq 1}$ such that,  for some constant $c>0$, 
\begin{align}\label{eq:vt}
g(x_t)\leq g(x_{t-1}) -\frac{v_t}{2}\|\nabla g(x_{t-1})\|^2,\quad v_t\geq c,\quad\forall t\geq 1.
\end{align}
Assume that there exists a convex  set $K\subseteq\R^d$ such that (i)  $x_t\in K$ for all $t\geq 0$, (ii) the function $g$ is strictly convex on $K$ and (iii) there exists an $x_\star\in K$ such that $g(x_\star)= \inf_{x\in K}g(x)$.   Then,
$$
g(x_T)- g(x_\star)\leq \frac{\|x_0-x_\star\|^2}{2 c  T},\quad\forall T\geq 1.
$$
\end{proposition}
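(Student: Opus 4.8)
The plan is to run the classical $\mathcal{O}(1/T)$ analysis of gradient descent, the only nonstandard ingredient being that convexity is available merely on the set $K$ rather than on all of $\R^d$, and that no upper bound on the step sizes $v_t$ (nor any global smoothness of $g$) is assumed. Throughout I would write $\Delta_t:=g(x_t)-g(x_\star)$ for the objective gap and recall that on its step-size line Algorithm \ref{algo:GD} produces the iterate $x_t=x_{t-1}-v_t\nabla g(x_{t-1})$. Conditions (i) and (iii) guarantee that $x_t\in K$ and that $x_\star$ minimises $g$ over $K$, hence $\Delta_t\ge 0$ for every $t\ge 0$; moreover the descent inequality in \eqref{eq:vt} immediately gives $g(x_t)\le g(x_{t-1})$, so the sequence $(\Delta_t)_{t\ge 0}$ is nonincreasing. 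I will use these two facts only at the very end.

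First I would expand the squared distance to the optimum along one step. Using the update rule,
$$\|x_t-x_\star\|^2=\|x_{t-1}-x_\star\|^2-2v_t\langle\nabla g(x_{t-1}),x_{t-1}-x_\star\rangle+v_t^2\|\nabla g(x_{t-1})\|^2.$$
Because $g$ is (strictly, hence) convex on the convex set $K$ by (ii) and both $x_{t-1}$ and $x_\star$ lie in $K$, the first-order characterisation of convexity applied along the segment $[x_{t-1},x_\star]\subseteq K$ yields $\Delta_{t-1}\le\langle\nabla g(x_{t-1}),x_{t-1}-x_\star\rangle$, which I substitute into the cross term. For the quadratic term I rewrite $v_t^2\|\nabla g(x_{t-1})\|^2=2v_t\cdot\tfrac{v_t}{2}\|\nabla g(x_{t-1})\|^2$ and bound $\tfrac{v_t}{2}\|\nabla g(x_{t-1})\|^2\le g(x_{t-1})-g(x_t)=\Delta_{t-1}-\Delta_t$ using \eqref{eq:vt}. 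Putting the two substitutions together, the $\Delta_{t-1}$ contributions cancel and one is left with the clean telescoping bound $2v_t\Delta_t\le\|x_{t-1}-x_\star\|^2-\|x_t-x_\star\|^2$.

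Finally, since $v_t\ge c$ and $\Delta_t\ge0$, this gives $2c\,\Delta_t\le\|x_{t-1}-x_\star\|^2-\|x_t-x_\star\|^2$; summing over $t=1,\dots,T$ the right-hand side telescopes to $\|x_0-x_\star\|^2-\|x_T-x_\star\|^2\le\|x_0-x_\star\|^2$. Invoking the monotonicity of $(\Delta_t)$ to bound $T\Delta_T\le\sum_{t=1}^T\Delta_t$ then yields $2cT\Delta_T\le\|x_0-x_\star\|^2$, which is exactly the claim. The argument is essentially routine; the only point that demands genuine care is the bookkeeping in the second paragraph, namely controlling the quadratic gradient term by the per-step decrease \eqref{eq:vt} with precisely the right factor of $v_t$ so that the $\Delta_{t-1}$ terms cancel, so that the conclusion needs nothing beyond \eqref{eq:vt} together with $v_t\ge c$ (in particular, no upper bound on the step sizes and no global $L$-smoothness of $g$).
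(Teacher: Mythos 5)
Your proof is correct and follows essentially the same route as the paper's: both run the classical $\mathcal{O}(1/T)$ analysis, combining first-order convexity on $K$ with the descent inequality \eqref{eq:vt} to obtain the per-step telescoping bound $2v_t\big(g(x_t)-g(x_\star)\big)\leq \|x_{t-1}-x_\star\|^2-\|x_t-x_\star\|^2$, then using $v_t\geq c$ together with nonnegativity, telescoping, and monotonicity of the gaps to conclude. The only difference is cosmetic bookkeeping (you expand the squared distance and substitute, the paper starts from the function-value side and completes the square), so nothing further is needed.
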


If the function $g$ is $L$-smooth (assuming without loss of generality that $L>1$) then \eqref{eq:vt} holds for instance when $v_t=v$ for all $t\geq 1$ and  some $v\in(0,1/L]$ \citep[see e.g.~the proof of][Theorem 2.1.14, page 80]{nesterov2018lectures}. In  practice, the value of $L$ is however rarely known and   backtracking line search, described in Algorithm \ref{algo:Back}, is a simple  procedure that can be used to automatically compute, at each iteration $t$, a value for $v_t$ which ensures that  \eqref{eq:vt} holds with $c= \beta/L$, where  $\beta\in (0,1)$ is a  parameter of Algorithm \ref{algo:Back}. This property  of backtracking line search is formalized in the following proposition, which plays a key role in the proofs of our main results.

\begin{proposition}\label{prop:GD_result}
Let $g:\R^d\rightarrow\R$ be a   differentiable function,   $x_0\in\R^d$, $\beta\in (0,1)$, and  for all $T\in\mathbb{N}$  let $x_T$ be as computed by Algorithm \ref{algo:GD} where Algorithm \ref{algo:Back} is used to compute $v_t$ for all $t\geq 1$. In addition, let   $\tilde{K}\subseteq\R^d$ be a  convex set containing the set $\{x\in\R^d:\, g(x)\leq g(x_0)\}$ and let
$$
K=\Big\{x\in\R^d:\,\exists x'\in \tilde{K}\text{ such that }\|x-x'\|\leq \sup_{\tilde{x}\in \tilde{K}}\|\nabla g(\tilde{x})\|\Big\}. 
$$
Assume that the function $g$ is strictly convex on $K$ and that there exists a constant $L_K\in[1,\infty)$  such that  $\|\nabla g(x)-\nabla g(x')\|\leq L_K\|x-x'\|$ for all $(x,x')\in K^2$. Then, \eqref{eq:vt} holds with $c=\beta/L_K$.
\end{proposition}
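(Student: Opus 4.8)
The plan is to separate the two requirements in \eqref{eq:vt}: the sufficient-decrease inequality $g(x_t)\le g(x_{t-1})-\tfrac{v_t}{2}\|\nabla g(x_{t-1})\|^2$, and the uniform lower bound $v_t\ge\beta/L_K$. The first of these is essentially free: by construction, backtracking line search (Algorithm \ref{algo:Back}) returns $v_t$ as the first value in a geometrically decreasing sequence $\bar v,\beta\bar v,\beta^2\bar v,\dots$, starting from an initial step-size $\bar v=1$, for which the Armijo condition $g(x_{t-1}-v\nabla g(x_{t-1}))\le g(x_{t-1})-\tfrac{v}{2}\|\nabla g(x_{t-1})\|^2$ holds. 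Since the gradient step sets $x_t=x_{t-1}-v_t\nabla g(x_{t-1})$, this is exactly the first inequality in \eqref{eq:vt}, and in particular it gives $g(x_t)\le g(x_{t-1})$. Iterating, I would conclude by induction that every iterate lies in the sublevel set $\{x:g(x)\le g(x_0)\}\subseteq\tilde K\subseteq K$.

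The substantive part is the lower bound $v_t\ge\beta/L_K$, which I would obtain from the descent lemma applied on $K$. First I would check that the only evaluations of $g$ needed for this argument occur at points of $K$, which is precisely what the fattening in the definition of $K$ guarantees: for any $x_{t-1}\in\tilde K$ and any $v\in(0,1]$, each point of the segment $s\mapsto x_{t-1}-sv\nabla g(x_{t-1})$, $s\in[0,1]$, lies within distance $v\|\nabla g(x_{t-1})\|\le\|\nabla g(x_{t-1})\|\le\sup_{\tilde x\in\tilde K}\|\nabla g(\tilde x)\|$ of $x_{t-1}\in\tilde K$, hence belongs to $K$ by definition. Since $L_K\ge 1$, this covers all $v\le 1/L_K$. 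On such a segment I can invoke the $L_K$-Lipschitzness of $\nabla g$ through the standard integral form of the descent lemma to get $g(x_{t-1}-v\nabla g(x_{t-1}))\le g(x_{t-1})-v\bigl(1-\tfrac{L_K v}{2}\bigr)\|\nabla g(x_{t-1})\|^2$, so that for every $v\in(0,1/L_K]$ the factor $1-\tfrac{L_K v}{2}\ge\tfrac12$ and the Armijo condition is satisfied.

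It then remains to read off the bound from the structure of backtracking. The accepted $v_t$ is the largest element of $\{\bar v,\beta\bar v,\dots\}$ satisfying Armijo. If the first trial $\bar v=1$ is accepted then $v_t=1\ge\beta/L_K$ (as $\beta<1\le L_K$); otherwise the previously rejected value $v_t/\beta$ violates Armijo, which by the previous paragraph forces $v_t/\beta>1/L_K$, i.e.\ $v_t>\beta/L_K$. Either way $v_t\ge\beta/L_K$, which is the claimed bound with $c=\beta/L_K$. The one delicate point I would be careful about---and the reason $K$ is defined as a $\sup\|\nabla g\|$-neighbourhood of $\tilde K$ rather than as $\tilde K$ itself---is the verification that the trial points entering the descent-lemma estimate never leave the region where $g$ is $L_K$-smooth; the degenerate case $\nabla g(x_{t-1})=0$ is harmless since then $x_t=x_{t-1}$ and \eqref{eq:vt} holds trivially.
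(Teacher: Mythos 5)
Your proposal is correct and follows essentially the same route as the paper's proof: the descent lemma on the convex fattened set $K$ shows the Armijo condition holds for every trial step $v\in(0,1/L_K]$ (the fattening by $\sup_{\tilde{x}\in\tilde{K}}\|\nabla g(\tilde{x})\|$ guaranteeing the trial points stay in $K$), the descent property of the accepted steps keeps the iterates in the sublevel set $\{x:\,g(x)\leq g(x_0)\}\subseteq\tilde{K}$, and the geometric structure of backtracking then forces the accepted step to satisfy $v_t\geq\beta/L_K$. Your explicit treatment of the rejected step $v_t/\beta$ and of the segment between $x_{t-1}$ and the trial point is just a slightly more detailed rendering of the same argument.
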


Given the large literature on convex optimization, the conclusions of Propositions \ref{prop:GD_result_0}-\ref{prop:GD_result} are probably known. We however  failed to find a reference establishing   similar results and therefore the two propositions are proved in the appendix.

\begin{algorithm} [t]
\begin{algorithmic}[1]
\Require Starting value $x_0\in\R^d$ and number of iterations $T\in\mathbb{N}$.
\vspace{0.01cm}

\For{$t=1,\dots,T$}
\State\label{SS}Choose a set-size $v_t$
\State $x_{t}=x_{t-1}-v_t \nabla g(x_{t-1})$
\EndFor

\hspace{-1.1cm}\textbf{Return:} $x_T$
\end{algorithmic}
\caption{(Generic gradient descent algorithm)\label{algo:GD}}
\end{algorithm}

\begin{algorithm} [!t] 
\begin{algorithmic}[1]
\Require $x_{t-1}\in\R^d$ and backtracking parameter $\beta\in(0,1)$.
\vspace{0.01cm}

\State $g_{t-1}= \nabla g(x_{t-1})$
\State $v_t=1$
\While {$g\big(x_{t-1}-v_t\, g_{t-1} \big)>g(x_{t-1})-\frac{v_t}{2}\|g_{t-1}\|^2$}
\State $v_t\gets \beta v_t$
\EndWhile

\State \textbf{return:} $v_t$
\end{algorithmic}
\caption{ (Backtracking line search)\label{algo:Back}}
\end{algorithm}

\begin{algorithm}[!t]
\begin{algorithmic}[1]
\Require Starting value $\theta_0\in\R^d$, sample size $n\in\mathbb{N}_0$, computational budget $B\in\mathbb{N}_0$, tolerance

parameter $\tau\in[0,\infty)$  and backtracking parameter $\beta\in(0,1)$.
\vspace{0.2cm}

\State Let  $t=0$, $\theta_{B,t}=\theta_0$ and $\tilde{B}=B$
\If{$\tilde{B}\geq nC_{\mathrm{grad}}$}
\State Let $G_t=\nabla F_n(\theta_{B,t})$
\State $\tilde{B}\gets\tilde{B}-nC_{\mathrm{grad}}$
\EndIf
\If{$\tilde{B}\geq nC_{\mathrm{eval}}$}
\State  Let  $g_t=F_n(\theta_{B,t})$
\State $\tilde{B}\gets \tilde{B}-n C_{\mathrm{eval}}$
\EndIf
\While{   $\|G_t\|> \tau $  and $\tilde{B}\geq  nC_{\mathrm{eval}}$}
\State Let   $g'_t=F_n(\theta_{B,t}-G_t)$ and $v_{t+1}=1$
\State $\tilde{B}\gets \tilde{B}-  nC_{\mathrm{eval}}$
\While {$g'_t>g_t-\frac{v_{t+1}}{2}\|G_t\|^2$ and $\tilde{B}\geq n C_{\mathrm{eval}}$}
\State $v_{t+1}\gets \beta v_{t+1}$
\State $g'_t\gets F_n(\theta_{B,t}-v_{t+1}G_t)$
\State $\tilde{B}\gets \tilde{B}-n C_{\mathrm{eval}}$
\EndWhile
\If{$g'_t\leq g_t-\frac{v_{t+1}}{2}\|G_t\|^2$}
\State\label{compute} Let $\theta_{B,t+1}=\theta_{B,t}-v_{t+1} G_t$ and $g_{t+1}= g'_t$
\State $t\gets t+1$
\If{$\tilde{B}\geq nC_{\mathrm{grad}}$}
\State Let $G_t\gets \nabla F_n(\theta_{B,t})$
\State $\tilde{B}\gets\tilde{B}-nC_{\mathrm{grad}}$
\Else
\State \textbf{break}
\EndIf
\EndIf
\EndWhile 

\State \textbf{return:} $( \theta_{B,t}, \tilde{B}\vee 0)$

\end{algorithmic}
\caption{\label{algo:GD1}}
\end{algorithm}

\section{Using gradient descent for noisy optimization\label{sec:GD1}}

\subsection{A simple application of gradient descent to noisy optimization\label{sub:GD_rate}}

Recall that it is assumed in this work   that there is a computational cost  $C_{\mathrm{grad}}\in \mathbb{N}$ for computing $\nabla f(\theta,z)$  and a computational cost $C_{\mathrm{eval}}\in \mathbb{N}$  for computing  $f(\theta,z)$  for a given pair $(\theta,z)\in\R^d\times\setZ$.  Then, for a given  computational budget $B\in\mathbb{N}$, a simple way to estimate $\theta_\star$ is to apply Algorithm \ref{algo:GD} to minimize the function  $g=F_{n(B)}$ for some $n(B)\in\mathbb{N}$,  using at each iteration the backtracking procedure described in Algorithm \ref{algo:Back} for choosing the step-size  and running the algorithm  until the computational resources are exhausted. Algorithm \ref{algo:GD1} (with $\tau=0$) provides a precise description of  this procedure.

 In order for the resulting estimator $\tilde{\theta}_{B}$ of $\theta_\star$ to be consistent as $B\rightarrow\infty$, it is clear that we must have (i) $n(B)\rightarrow\infty$, so that $F_{n(B)}\rightarrow F$, and (ii) $n(B)/B\rightarrow 0$, so that the number of iterations of the algorithm converges to infinity. The following theorem formalizes this intuitive reasoning and provides an upper bound on the convergence rate of $\|\tilde{\theta}_{B}-\theta_\star\|$.

\begin{theorem}\label{thm:rate_GD} 
Assume that \ref{assumption1}-\ref{assumption2} hold. Let $\theta_0\in\R^d$, $\beta\in(0,1)$ and $n:\mathbb{N}\rightarrow \mathbb{N}$ be such that  $\lim_{B\rightarrow\infty} n(B)/B=0$ and such that $\lim_{B\rightarrow\infty} n(B)=\infty$. In addition, for all $B\in\mathbb{N}$ let 
$$
(\tilde{\theta}_{B}, \tilde{B})=\mathrm{Algorithm\, \ref{algo:GD1} }\big(\theta_0,n(B), B, 0, \beta\big),\quad r(B)=\max\big(\sqrt{n(B)/B}, n(B)^{-\frac{\alpha}{1+\alpha}}\big)
$$
with $\alpha\in (0,1]$ as in \ref{assumption2}. Then, $\|\tilde{\theta}_{B}-\theta_\star\|=\bigO_\P\big(r(B)\big)$ and $\liminf_{B\rightarrow\infty} B^{\frac{\alpha}{1+3\alpha}}\,r(B)>0$.
\end{theorem}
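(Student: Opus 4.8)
The plan is to decompose the error into an optimization term and a statistical term. Writing $\hat\theta_{n(B)}$ for the minimizer of $F_{n(B)}$ over a fixed compact convex set (to be chosen below), the triangle inequality gives $\|\tilde\theta_B-\theta_\star\|\le \|\tilde\theta_B-\hat\theta_{n(B)}\|+\|\hat\theta_{n(B)}-\theta_\star\|$. I would bound the statistical term $\|\hat\theta_{n(B)}-\theta_\star\|$ by $\bigO_\P(n(B)^{-\frac{\alpha}{1+\alpha}})$ directly via Lemma \ref{lemma:MLE}, and bound the optimization term $\|\tilde\theta_B-\hat\theta_{n(B)}\|$ by $\bigO_\P(\sqrt{n(B)/B})$ by combining the descent guarantee of Propositions \ref{prop:GD_result_0}--\ref{prop:GD_result} with strong convexity. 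Since $r(B)=\max(\sqrt{n(B)/B},\,n(B)^{-\frac{\alpha}{1+\alpha}})$, the sum of the two terms is $\bigO_\P(r(B))$.

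First I would fix the geometry. Because $F$ is strictly convex (a consequence of \ref{assumption1}) and attains its minimum at $\theta_\star$, its sublevel sets are bounded, so $\tilde K_0:=\{\theta:F(\theta)\le F(\theta_0)+2\}$ is compact, convex, and contains both $\theta_0$ and $\theta_\star$ in its interior. Setting $G^\ast:=1+\sup_{\theta\in\tilde K_0}\|\nabla F(\theta)\|<\infty$, let $K_0:=\{\theta:\mathrm{dist}(\theta,\tilde K_0)\le G^\ast\}$, which is again compact. By Lemmas \ref{lemma:uniform_conv}--\ref{lemma:e-value} and Corollary \ref{cor:convex}, on a probability-one event and for all $B$ large enough (using $n(B)\to\infty$) the random function $F_{n(B)}$ is $\mu$-strongly convex and $L$-smooth on $K_0$ for fixed constants $0<\mu\le L<\infty$, satisfies $\sup_{\tilde K_0}\|\nabla F_{n(B)}\|\le G^\ast$ and $F_{n(B)}(\theta_0)\le F(\theta_0)+1$, and has its constrained minimizer $\hat\theta_{n(B)}:=\argmin_{\tilde K_0}F_{n(B)}$ in $\mathring{\tilde K_0}$ (so $\nabla F_{n(B)}(\hat\theta_{n(B)})=0$). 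The key structural step is a confinement argument by induction on $t$: every backtracking trial step $\theta_{B,t}-vG_t$ with $v\le1$ has length at most $\|\nabla F_{n(B)}(\theta_{B,t})\|\le G^\ast$, so all evaluated points lie in $K_0$; and since the Armijo rule enforced in Algorithm \ref{algo:GD1} gives $F_{n(B)}(\theta_{B,t+1})\le F_{n(B)}(\theta_0)\le F(\theta_0)+1$, uniform convergence on $K_0$ yields $F(\theta_{B,t+1})\le F(\theta_0)+2$, i.e.\ $\theta_{B,t+1}\in\tilde K_0$. Thus all iterates remain in $\tilde K_0$ and all evaluations in $K_0$.

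On this confined region the $L$-smoothness makes the backtracking loop terminate after a bounded number $N_\beta=\bigO(\log L)$ of evaluations with a step obeying \eqref{eq:vt} for $c=\beta/L$; this is the content of Proposition \ref{prop:GD_result}, now applied on $K_0$. Hence each outer iteration costs at most $n(B)(C_{\mathrm{grad}}+N_\beta C_{\mathrm{eval}})$ units, the number of completed iterations satisfies $T_B\ge B/(\kappa\,n(B))-\bigO(1)$ for a constant $\kappa$, and $T_B\to\infty$ with $T_B\gtrsim B/n(B)$ because $n(B)/B\to0$. Applying Proposition \ref{prop:GD_result_0} with $K=\tilde K_0$ and $x_\star=\hat\theta_{n(B)}$ gives $F_{n(B)}(\tilde\theta_B)-F_{n(B)}(\hat\theta_{n(B)})\le \|\theta_0-\hat\theta_{n(B)}\|^2/(2cT_B)$; since $\|\theta_0-\hat\theta_{n(B)}\|\to\|\theta_0-\theta_\star\|$ and $c^{-1}=L/\beta$ are bounded, the right-hand side is $\bigO_\P(n(B)/B)$. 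Feeding this into the $\mu$-strong-convexity lower bound $F_{n(B)}(\tilde\theta_B)-F_{n(B)}(\hat\theta_{n(B)})\ge\frac{\mu}{2}\|\tilde\theta_B-\hat\theta_{n(B)}\|^2$ yields $\|\tilde\theta_B-\hat\theta_{n(B)}\|=\bigO_\P(\sqrt{n(B)/B})$, and combining with Lemma \ref{lemma:MLE} completes the upper bound $\|\tilde\theta_B-\theta_\star\|=\bigO_\P(r(B))$. For the lower bound, which is purely deterministic: $\sqrt{n/B}$ increases and $n^{-\frac{\alpha}{1+\alpha}}$ decreases in $n$, so their maximum is never below its value at the crossing point $n=B^{\frac{1+\alpha}{1+3\alpha}}$, where both equal $B^{-\frac{\alpha}{1+3\alpha}}$; hence $r(B)\ge B^{-\frac{\alpha}{1+3\alpha}}$ for every choice of $n(B)$, giving $\liminf_{B\to\infty}B^{\frac{\alpha}{1+3\alpha}}r(B)\ge1>0$.

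I expect the main obstacle to be the confinement step together with the uniform-in-$B$ control of the constants $\mu$, $L$, $N_\beta$ and of $\sup_{\tilde K_0}\|\nabla F_{n(B)}\|$. Because \ref{assumption1}--\ref{assumption2} are only local, $F_{n(B)}$ need not be globally convex and its sublevel set $\{F_{n(B)}\le F_{n(B)}(\theta_0)\}$ need not be bounded, so Proposition \ref{prop:GD_result} cannot be invoked with a global set $\tilde K$; instead every quantity must be re-anchored to the fixed compact $K_0$ and shown to hold simultaneously for all large $B$ on a single probability-one event, which is exactly what the almost-sure convergence in Lemmas \ref{lemma:uniform_conv}--\ref{lemma:e-value} and Corollary \ref{cor:convex} supplies once $n(B)\to\infty$.
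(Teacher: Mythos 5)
Your proposal is correct and follows essentially the same route as the paper's own proof, which simply assembles its appendix lemmas (confinement of the iterates via uniform convergence and the descent property, a bounded per-iteration cost giving $T_B\gtrsim B/n(B)$, the conversion of the objective gap into a parameter distance, and the statistical rate of Lemma \ref{lemma:MLE}) applied with $j=0$, $x=\theta_0$, $\tau_B=0$ -- precisely the steps you re-derive inline, together with the same deterministic crossing-point argument for $\liminf_B B^{\frac{\alpha}{1+3\alpha}}r(B)\geq 1$. The only cosmetic difference is that you pass from $F_{n(B)}(\tilde{\theta}_B)-F_{n(B)}(\hat{\theta}_{n(B)})$ to $\|\tilde{\theta}_B-\hat{\theta}_{n(B)}\|$ by the strong-convexity inequality directly, whereas the paper's Lemma \ref{lemma:Error_T} uses a Taylor expansion with a Lipschitz-Hessian remainder; both are valid, and your re-anchoring of Proposition \ref{prop:GD_result} to the fixed compact set (rather than to the possibly unbounded sublevel set of $F_{n(B)}$) is exactly how the paper's Lemmas \ref{lemma:stay_C}--\ref{lemma:Error_F} handle the same point.
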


The expression for the rate $r(B)$ has a very intuitive and simple explanation. Letting    $\Theta$ be a compact set containing $\theta_\star$,  we have
\begin{align}\label{eq:rate_B}
\|\tilde{\theta}_{B}-\theta_\star\|\leq \|\tilde{\theta}_{B}-\hat{\theta}_{\Theta,n(B)}\|+\|\hat{\theta}_{\Theta,n(B)}-\theta_\star\| 
\end{align}
where the first term on the right-hand side   is the numerical error resulting from using GD-BLS to minimize  $F_{n(B)}$. Under the assumptions of the theorem, if $T$ iterations of  GD-BLS  are performed then the numerical error is  of size $T^{-1/2}$. In the proof of the theorem we show that, under \ref{assumption1}-\ref{assumption2}, the cost per iteration of Algorithm \ref{algo:GD1} is approximatively constant, implying that $\tilde{\theta}_{B}$ is computed by running $T\approx B/n(B)$ iterations of GD-BLS. Consequently, the   numerical error $\|\tilde{\theta}_{B}-\hat{\theta}_{\Theta,n(B)}\|$ is of size $\sqrt{B/n(B)}$, which is the first term appearing in the definition of $r(B)$. The second term in \eqref{eq:rate_B} is the statistical error which, by Lemma \ref{lemma:MLE}, is of size $\bigO_\P(n(B)^{-\frac{\alpha}{1+\alpha}})$. Therefore, the rate $r(B)$ given in Theorem \ref{thm:rate_GD} is simply the maximum between the numerical error and the statistical error. 

The last part of Theorem \ref{thm:rate_GD} follows from the fact that the fastest rate at which $r(B)$ can decrease to zero is obtained when the numerical error and the statistical error vanish at the same speed. It is direct to see that this happens when $n(B)\approx B^{\frac{1+\alpha}{1+3\alpha}}$, in which case $r(B)=\bigO(B^{-\frac{\alpha}{1+3\alpha}})$. This result is formalized in the following corollary.
\begin{corollary}\label{cor:1step}
Consider the set-up of Theorem \ref{thm:rate_GD} with $n:\mathbb{N}\rightarrow \mathbb{N}$ such that
$$
0<\liminf_{B\rightarrow\infty}\frac{n(B)}{B^{\frac{1+\alpha}{1+3\alpha}}}\leq \limsup_{B\rightarrow\infty}\frac{n(B)}{B^{\frac{1+\alpha}{1+3\alpha}}}<\infty.
$$
Then,  $\|\tilde{\theta}_{B}-\theta_\star\|=\bigO_\P\big(n(B)^{-\frac{\alpha}{1+\alpha}}\big)=\bigO_\P(B^{-\frac{\alpha}{1+3\alpha}})$.
\end{corollary}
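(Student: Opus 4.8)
The plan is to derive Corollary \ref{cor:1step} directly from Theorem \ref{thm:rate_GD} by substituting the prescribed order of $n(B)$ into the expression for the rate $r(B)$ and checking that the two competing terms balance. Writing $\gamma:=\frac{1+\alpha}{1+3\alpha}$, the hypothesis of the corollary says precisely that $n(B)\asymp B^{\gamma}$, i.e.\ that there exist constants $0<c_1\leq c_2<\infty$ with $c_1 B^{\gamma}\leq n(B)\leq c_2 B^{\gamma}$ for all $B$ large enough.

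First I would verify that this $n$ is admissible in Theorem \ref{thm:rate_GD}. Since $\alpha>0$ we have $1+\alpha<1+3\alpha$ and hence $\gamma\in(0,1)$, so $n(B)\to\infty$ and $n(B)/B\asymp B^{\gamma-1}\to 0$; both hypotheses of the theorem therefore hold, and we may conclude that $\|\tilde{\theta}_B-\theta_\star\|=\bigO_\P(r(B))$ with $r(B)=\max\big(\sqrt{n(B)/B},\,n(B)^{-\alpha/(1+\alpha)}\big)$.

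The heart of the argument is then a short exponent computation. Substituting $n(B)\asymp B^{\gamma}$, the statistical term satisfies $n(B)^{-\frac{\alpha}{1+\alpha}}\asymp B^{-\gamma\frac{\alpha}{1+\alpha}}=B^{-\frac{\alpha}{1+3\alpha}}$, using $\gamma\frac{\alpha}{1+\alpha}=\frac{\alpha}{1+3\alpha}$, while the numerical term satisfies $\sqrt{n(B)/B}\asymp B^{(\gamma-1)/2}=B^{-\frac{\alpha}{1+3\alpha}}$, since $\gamma-1=-\frac{2\alpha}{1+3\alpha}$. The exponent $\gamma$ is in fact defined so that these two coincide: equating $(\gamma-1)/2=-\gamma\alpha/(1+\alpha)$ gives exactly $\gamma(1+3\alpha)=1+\alpha$, the defining relation for $\gamma$. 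Thus both arguments of the maximum are $\asymp B^{-\frac{\alpha}{1+3\alpha}}$, whence $r(B)\asymp B^{-\frac{\alpha}{1+3\alpha}}$, and both claimed equalities $\|\tilde{\theta}_B-\theta_\star\|=\bigO_\P(n(B)^{-\alpha/(1+\alpha)})=\bigO_\P(B^{-\alpha/(1+3\alpha)})$ follow at once.

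There is essentially no obstacle here: the corollary is a purely algebraic specialization of the theorem, and the only mild care needed is to propagate the constants $c_1,c_2$ through the order relations so that the $\asymp$ statements are rigorous rather than heuristic. The integer-valuedness of $n$ causes no difficulty, because the hypothesis is already phrased via the $\liminf$ and $\limsup$ of the ratio $n(B)/B^{\gamma}$ lying in $(0,\infty)$, which absorbs any rounding. The genuine content is simply the observation---already flagged in the discussion preceding the corollary---that the fastest achievable $r(B)$ is obtained when the numerical and statistical errors vanish at the same speed, and the computation above makes that balancing explicit.
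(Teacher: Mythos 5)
Your proposal is correct and follows exactly the paper's (implicit) argument: the paper gives no separate proof of this corollary, treating it as the formalization of the balancing computation in the paragraph preceding it, which is precisely the exponent calculation you carry out. Your additional checks (that $\gamma\in(0,1)$ makes $n$ admissible in Theorem \ref{thm:rate_GD}, and that the $\liminf$/$\limsup$ hypothesis absorbs rounding) are sound and only make explicit what the paper leaves to the reader.
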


Algorithm \ref{algo:GD1} belongs to the well-known class of sample average approximation   algorithms  \citep[see][for a  survey of sample average approximation methods]{kim2015guide}, which aim at computing an estimate of $\theta_\star$ by minimizing the function $F_{n(B)}$ with a deterministic optimization routine $\mathcal{R}$. The  optimal allocation of the available computational resources (i.e.~of the optimal choice of $n(B)$) is studied in \citet{royset2013optimal} under general conditions on the optimizer  $\mathcal{R}$ but, unlike Theorem \ref{thm:rate_GD} and Corollary \ref{cor:1step}, the results presented in this reference assume that the search space is compact and they are limited to the case where $\alpha=1$.

\subsection{An improved application of gradient descent to noisy optimization\label{sec:GD2}}

\subsubsection{The algorithm\label{sub:algo}}

The procedure sketched in the introductory section to improve  upon the  convergence rate of $\tilde{\theta}_{B}$ obtained in Corollary  \ref{cor:1step} is presented in Algorithm \ref{algo:GD2}. Before providing, in Theorem \ref{thm:rate_GD_main} below, a convergence result for this algorithm  some comments are in order.

Firstly,  in Algorithm \ref{algo:GD2}, for all $j$ a specific form is imposed on  the number  $n_j(B)$ of elements of $(Z_i)_{i\geq 1}$  and on the tolerance parameter $\tau_j(B)$ that are used in the $j$-th call of Algorithm \ref{algo:GD1}. This is only to simplify the presentation of the algorithm and  the conclusions of Theorem \ref{thm:rate_GD_main}  hold  for any alternative functions $\tilde{n}_j$ and $\tilde{\tau}_j$ satisfying
$$
0<\liminf_{B\rightarrow\infty}\frac{n_j(B)}{\tilde{n}_j(B)}\leq \limsup_{B\rightarrow\infty}\frac{n_j(B)}{\tilde{n}_j(B)}<\infty,\quad  0<\liminf_{B\rightarrow\infty}\frac{\tau_j(B)}{\tilde{\tau}_j(B)}\leq \limsup_{B\rightarrow\infty}\frac{\tau_j(B)}{\tilde{\tau}_j(B)}<\infty.
$$

Secondly,  in Algorithm \ref{algo:GD2} we have $\tau_j(B)>0$ even when $j=J$, that is even in the last call of Algorithm \ref{algo:GD1} the optimization of the function $F_{n_j(B)}$ is stopped earlier when its gradient is small. However,  there is no reasons for stopping earlier the minimization of  $F_{n_J(B)}$ and our convergence result (Theorem \ref{thm:rate_GD_main})   remains valid when, for $j=J$, line 5 of  Algorithm \ref{algo:GD2} is replaced by
\begin{quote}
5:\hspace*{2cm} $(\hat{\theta}_{j,B}, B_j)=\mathrm{Algorithm\, \ref{algo:GD1} }\Big(\hat{\theta}_{j-1,B},\lceil \kappa B^{\gamma_j}\rceil , B_{j-1}, 0, \beta\Big)$.
\end{quote}

Nevertheless, in its current form, Algorithm \ref{algo:GD2} has the following advantage: if we let $J=J_1$ for some $J_1>1$ but, for some   $J_2\in\{1,\dots,J_1-1\}$,  the computational budget is exhausted after the $J_2$-th call of   Algorithm \ref{algo:GD1} then the estimate of $\theta_\star$ returned by Algorithm \ref{algo:GD2} is the one we would have obtained if we had chosen $J=J_2$. From a practical point of view, this means that we can choose a large value for $J$  and let the algorithm determines how many calls to Algorithm \ref{algo:GD1} can be made given the available computational resources.

\subsubsection{Convergence result\label{sub:conv}}

In fact, expanding on the latter point, the only reason why Algorithm \ref{algo:GD2} imposes an  upper bound $J$ on the number of calls of Algorithm \ref{algo:GD1} is to prove the following result:

\begin{algorithm}[t]
\begin{algorithmic}[1]
\Require Starting value $\theta_0\in\R^d$, computational budget $B\in\mathbb{N}$, rate parameters  $\alpha'\in(0,1]$ and 

$\delta\in [0,1)$, maximum number of GD-BLS calls $J\in\mathbb{N}$, sample size parameters  $\kappa\in(0,\infty)$, 

tolerance parameter $\tau\in(0,\infty)$  and backtracking parameter $\beta\in(0,1)$.
\vspace{0.2cm}

\State Let $j=0$, $B_0=B$ and $\hat{\theta}_{0,B}=\theta_0$

\While{$B_{j}>0$ and $j<J$}
\State Let  $j=j+1$ and $\gamma_j=1- \delta^j$
 
\State Let $(\hat{\theta}_{j,B}, B_j)=\mathrm{Algorithm\, \ref{algo:GD1} }\Big(\hat{\theta}_{j-1,B},\lceil \kappa B^{\gamma_j}\rceil , B_{j-1},  \tau B^{-  \frac{\alpha'}{1+ \alpha'}\gamma_j}, \beta\Big)$

\EndWhile
\State \textbf{return:} $\hat{\theta}_{j,B}$

\end{algorithmic}
\caption{\label{algo:GD2}}
\end{algorithm}

\begin{theorem}\label{thm:rate_GD_main}
Assume that \ref{assumption1}-\ref{assumption2} hold. Let $\alpha'\in(0,1]$, $\theta_0\in\R^d$,   $J\in\mathbb{N}$, $(\kappa,\tau)\in (0,\infty)^2$, $\beta\in(0,1)$, $\delta\in \big( 2\alpha'/(1+3\alpha'),1\big)$ and 
$$
\big(\hat{\theta}_{J,B}, \tilde{B}\big)=\mathrm{Algorithm\, \ref{algo:GD2} }\big(\theta_0,B,\alpha',\delta, J,\kappa,\tau,\beta\big),\quad\forall B\in\mathbb{N}.
$$
Then,  $\|\hat{\theta}_{J,B}-\theta_\star\|=\smallo_\P(1)$ and if \ref{assumption2} holds for $\alpha=\alpha'$ then
\begin{align*}
\|\hat{\theta}_{J,B}-\theta_\star\|=\bigO_\P\Big(B^{-\frac{\alpha'}{1+\alpha'}(1-\delta^J)}\Big).
\end{align*}
\end{theorem}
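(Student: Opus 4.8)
The plan is to prove both assertions by a single induction on the call index $j\in\{1,\dots,J\}$, tracking simultaneously the accuracy $\|\hat\theta_{j,B}-\theta_\star\|$ and the residual budget $B_j$. Write $a'=\alpha'/(1+\alpha')$ and recall $\gamma_j=1-\delta^j$, so that the $j$-th call of Algorithm \ref{algo:GD1} uses sample size $n_j(B)=\lceil\kappa B^{\gamma_j}\rceil$, tolerance $\tau_j(B)=\tau B^{-a'\gamma_j}$ and warm start $\hat\theta_{j-1,B}$. Before the induction I would fix, once and for all, a compact convex $\Theta\subset\R^d$ with $\theta_\star\in\mathring\Theta$ large enough to contain all iterates of all $J$ calls with probability tending to one; on $\Theta$, Corollary \ref{cor:convex} then supplies, eventually $\P$-a.s., a common strong-convexity constant $c_\Theta$ and gradient-Lipschitz bound $1/c_\Theta$ valid for every $F_{n_j}$ with $j\le J$, and for $n_j$ large the constrained minimizer $\hat\theta_{\Theta,n_j}$ is interior, so $\nabla F_{n_j}(\hat\theta_{\Theta,n_j})=0$. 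Granting this confinement, each call inherits the deterministic guarantees of Propositions \ref{prop:GD_result_0}--\ref{prop:GD_result}: the backtracking step-sizes satisfy \eqref{eq:vt} with $c\ge\beta c_\Theta$, the number of backtracking evaluations per iteration is bounded by a constant (because $v_t\ge\beta c_\Theta$), and hence the cost of one iteration of the $j$-th call is at most a fixed multiple of $n_j(B)$.

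Two elementary conversions drive the induction. First, whenever a call stops through its tolerance criterion, strong convexity on $\Theta$ together with $\nabla F_{n_j}(\hat\theta_{\Theta,n_j})=0$ gives $\|\hat\theta_{j,B}-\hat\theta_{\Theta,n_j}\|\le \tau_j(B)/c_\Theta$; combined with Lemma \ref{lemma:MLE}, which yields $\|\hat\theta_{\Theta,n_j}-\theta_\star\|=\bigO_\P(n_j(B)^{-a})$ with $a=\alpha/(1+\alpha)$, this controls $\|\hat\theta_{j,B}-\theta_\star\|$. Second, to show the tolerance is actually reached within budget I bound the number $T_j$ of iterations needed: by Proposition \ref{prop:GD_result_0}, after $T$ iterations $F_{n_j}(\theta_{B,T})-\min F_{n_j}\le \|\hat\theta_{j-1,B}-\hat\theta_{\Theta,n_j}\|^2/(2cT)$; strong convexity on $\Theta$ turns this into a bound on $\|\theta_{B,T}-\hat\theta_{\Theta,n_j}\|^2$, and the Lipschitz bound $\|\nabla F_{n_j}(\theta_{B,T})\|\le (1/c_\Theta)\|\theta_{B,T}-\hat\theta_{\Theta,n_j}\|$ then shows that $T_j=\bigO_\P\big(\|\hat\theta_{j-1,B}-\hat\theta_{\Theta,n_j}\|^2/\tau_j(B)^2\big)$ iterations suffice to push the gradient below $\tau_j(B)$. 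In the rate case ($\alpha=\alpha'$), feeding in the induction hypothesis $\|\hat\theta_{j-1,B}-\theta_\star\|=\bigO_\P(B^{-a'\gamma_{j-1}})$ and using $\gamma_{j-1}<\gamma_j$ gives $T_j=\bigO_\P(B^{2a'(\gamma_j-\gamma_{j-1})})$, so the cost of the $j$-th call is $n_j(B)\,T_j=\bigO_\P(B^{\rho_j})$ with $\rho_j=\gamma_j+2a'(\gamma_j-\gamma_{j-1})$.

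The crux is then a short computation: since $\gamma_j-\gamma_{j-1}=\delta^{j-1}(1-\delta)$ one finds $\rho_j<1\iff\delta>2a'/(1+2a')=2\alpha'/(1+3\alpha')$, which is exactly the standing assumption on $\delta$. Consequently the cumulative cost of calls $1,\dots,j$ is $\bigO_\P(B^{\max_i\rho_i})=\smallo_\P(B)$, so $B_{j-1}\ge B/2$ with probability tending to one, and the $j$-th call terminates through its tolerance criterion rather than by budget exhaustion; this closes the budget half of the induction and, through the first conversion, upgrades the accuracy to $\|\hat\theta_{j,B}-\theta_\star\|=\bigO_\P(B^{-a'\gamma_j})$. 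Taking $j=J$ gives the stated rate $\bigO_\P\big(B^{-\frac{\alpha'}{1+\alpha'}(1-\delta^J)}\big)$. For the consistency statement (general $\alpha'$, no rate) the same induction runs with $a$ replaced by $\min(a,a')$; when $\alpha'>\alpha$ the tolerance may be too stringent for the later calls, but then one argues directly that the first call whose budget is exhausted still performs $T\gtrsim B^{1-\gamma_j}\to\infty$ iterations on a sample of size $n_j(B)\to\infty$, so both its numerical error ($\bigO_\P(T^{-1/2})$) and its statistical error ($\bigO_\P(n_j(B)^{-a})$) vanish, whence $\|\hat\theta_{J,B}-\theta_\star\|=\smallo_\P(1)$.

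I expect the main obstacle to be the uniform confinement claim used to fix $\Theta$ and the constant $c_\Theta$: one must bootstrap between \emph{iterates stay in $\Theta$} and \emph{the constants governing descent are uniform}, simultaneously for all $J$ calls and uniformly in $B$, while also accommodating the gradient-inflation of the sublevel set that appears in the definition of $K$ in Proposition \ref{prop:GD_result}. I would handle this using coercivity of $F$ (derived from Assumption \ref{assumption1}) to bound the first call's sublevel set, the warm starts $\hat\theta_{j-1,B}\to\theta_\star$ to confine the later calls to a small neighbourhood of $\theta_\star$, and the uniform convergence $\nabla^k F_n\to\nabla^k F$ of Lemma \ref{lemma:uniform_conv} to transfer the resulting bounds from $F$ to the random functions $F_{n_j}$.
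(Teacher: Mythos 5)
Your proof of the rate statement (the case $\alpha=\alpha'$) follows essentially the same route as the paper: an induction over the $J$ calls that simultaneously tracks accuracy and residual budget, with the same two conversions (tolerance plus strong convexity controls the distance to the empirical minimizer; the $1/\sqrt{T}$ decay of the gradient norm bounds the number of iterations needed to reach the tolerance), and your crux computation $\rho_j<1\iff\delta>2\alpha'/(1+3\alpha')$ is exactly the content of the paper's Lemma \ref{lemma:gamma}. The confinement issue you flag at the end is real, and it is what the paper resolves with the nested sublevel sets $\Theta_{j,x}$ of Lemma \ref{lemma:stay_C} and the inflated sets of Lemmas \ref{lemma:Error_F}--\ref{lemma:budget}; your sketch (coercivity, warm starts, Lemma \ref{lemma:uniform_conv}) is the right toolkit for it. So that half of the theorem is in good shape.

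The consistency statement is where you have a genuine gap. When $\alpha<\alpha'$ you correctly note that the induction can break, because the tolerance $\tau B^{-\frac{\alpha'}{1+\alpha'}\gamma_j}$ is then smaller than the statistical error $n_j(B)^{-\frac{\alpha}{1+\alpha}}$, so the iteration-count bound $T_j\lesssim\|\hat\theta_{j-1,B}-\hat\theta_{\Theta,n_j}\|^2/\tau_j(B)^2$ no longer yields cost $\smallo_\P(B)$ per call. But your fallback --- \emph{the first call whose budget is exhausted still performs $T\gtrsim B^{1-\gamma_j}$ iterations} --- implicitly assumes that this call starts with a non-vanishing fraction of $B$. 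That does not follow: the calls preceding it terminate via tolerance by definition, but in the mismatched regime the only available upper bounds on their tolerance-reaching times are precisely the bounds that may exceed $B$, so nothing prevents, say, call $1$ from stopping via tolerance only after consuming all but $\smallo(B)$ of the budget, leaving the next call with $\smallo(B^{1-\gamma_2})$ iterations (possibly $O(1)$), in which case its numerical error need not vanish by your reasoning. The paper closes exactly this hole by a different mechanism: it never lower-bounds the iteration counts of calls $2,\dots,J$, and instead uses only the descent property. Since each call starts from the previous output and can only decrease the current empirical objective, strong convexity and Lemma \ref{lemma:Error_F} give $\|\hat\theta_{j,B}-\hat\theta_{K,n_{j,B}}\|^2\lesssim \|\hat\theta_{j-1,B}-\hat\theta_{K,n_{j,B}}\|^2$, i.e.~the error can grow by at most a fixed constant factor per stage, plus the statistical drift $\|\hat\theta_{K,n_{j,B}}-\hat\theta_{K,n_{j-1,B}}\|$ between successive empirical minimizers controlled by Lemma \ref{lemma:MLE}; unrolling this recursion over at most $J$ (finitely many) stages reduces consistency to the consistency of $\hat\theta_{1,B}$, with no control whatsoever on how many iterations the later calls perform. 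You should replace your fallback with this descent-propagation argument (it is the recursion with the $4D_J/\lambda_J$ factors in the paper's proof of the first part of the theorem).
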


Assuming that \ref{assumption2} holds for $\alpha=\alpha'$, it follows from Theorem \ref{thm:rate_GD_main} that taking a small value for $J$ can only make slower the speed at which $\hat{\theta}_{J,B}$ converges to $\theta_\star$. However, as mentioned in the introductory section,  to observe in practice the $\bigO\big(B^{-\frac{\alpha'}{1+\alpha'}(1-\delta^J)}\big)$ convergence rate it is needed to minimize the function  $F_{n_J(B)}$, which may not be the case if $J$ is too large given the available computational budget. For this reason, we expect the observed convergence rate to be  $B^{-\frac{\alpha'}{1+\alpha'}(1-\delta^{J_{B}})}$, with $J_{B}$ the number of calls of Algorithm \ref{algo:GD1} needed for computing $\hat{\theta}_{J,B}$, that is with  $J_{B}$ the smallest value of $j\in\{1,\dots,J\}$ such that $\hat{\theta}_{J_{B},B}=\hat{\theta}_{J,B}$. A sensible approach is therefore to let the algorithm decides how many calls to Algorithm \ref{algo:GD1} are needed by choosing    $J$ large enough so that   $J_{B}<J$ with probability close to one. The numerical experiments presented in the next section suggest that $J_{B}$ increases with $B$ at speed $\log(B)$, and verifying theoretically this observation is left for future research.

Noting that, for any $j\geq 1$, increasing $\delta$ makes the $j$-th call of  Algorithm \ref{algo:GD1} computationally less expensive, it follows that $J_{B}$    increases with $\delta$. Hence, the larger $\delta$ the slower is the convergence rate of $\hat{\theta}_{j,B}$ for any given $j\geq 1$, but the larger the number of calls  to  Algorithm \ref{algo:GD1} can be made.  The numerical experiments of Section \ref{sec:example} suggest that the choice of $\delta$ has little impact on the convergence behaviour of  $\hat{\theta}_{J,B}$.

Finally, if   \ref{assumption2} does not hold for $\alpha=\alpha'$ then the proof of Theorem \ref{thm:rate_GD_main} shows that  $\|\hat{\theta}_{J,B}-\theta_\star\|$ converges to zero at least at speed $B^{-\frac{\alpha}{1+\alpha}(1-\delta)}$, for any $\alpha$ such that  \ref{assumption2} holds.  Studying more precisely the convergence rate of $\hat{\theta}_{J,B}$ in this context is left for future research, but we expect this rate to be slower than the one obtained  when \ref{assumption2} holds for the chosen value of $\alpha'$. To understand why, remark that in Algorithm \ref{algo:GD2} the parameter $\alpha'$ only impacts the value of the tolerance parameter used in each call of Algorithm \ref{algo:GD1}, and that the larger $\alpha'$ the smaller the tolerance parameter. Then, if $\alpha'$ is too large (i.e.~if \ref{assumption2} does not hold for $\alpha=\alpha'$), for any $j\in\{1,\dots,J\}$ too much computational budget is allocated to the $j$-th call of Algorithm \ref{algo:GD1}, resulting in a waste of computational resources and thus in a slower convergence rate as $B\rightarrow\infty$. In particular, it may be the case that even for large values of $B$ we have $J_B<J$ with high probability.

\subsubsection{Understanding  where the fast convergence rate comes from\label{subsubsec:fast}}

To understand why Algorithm \ref{algo:GD2} allows to significantly improve upon the convergence rate obtained in Section \ref{sub:GD_rate}, consider the set-up of Theorem \ref{thm:rate_GD} with the fixed starting value $\theta_0$ replaced by a sequence $(\theta_{0,B})_{B\geq 1}$ of starting values such that $\|\theta_{0,B}-\theta_\star\|=\bigO(B^{-c})$ for some constant $c\geq 0$, that is let
$$
(\tilde{\theta}_{c,B}, \tilde{B})=\mathrm{Algorithm\, \ref{algo:GD1} }\big(\theta_{0,B},n_c(B), B, 0, \beta\big),\quad\forall B\in\mathbb{N}
$$
for some   function $n_c$ verifying the assumptions of Theorem \ref{thm:rate_GD}. In this case, our computations show that
\begin{align}\label{eq:crate}
\|\tilde{\theta}_{c,B}-\theta_\star\|=\bigO_\P(r_c(B)),\quad r_c(B)=\max\Big(B^{-c}\sqrt{n_c(B)/B}, n_c(B)^{-\frac{\alpha}{1+\alpha}}\Big)
\end{align}
where, as in Section \ref{sub:GD_rate} and for some $\Theta\subset\R^d$, the first term in the definition of $r_c(B)$ is the numerical error  $\|\tilde{\theta}_{c,B}-\hat{\theta}_{\Theta,n_c(B)}\|$ and the second one is the statistical error $\|\hat{\theta}_{\Theta,n_c(B)}-\theta_\star\|$. From \eqref{eq:crate} we observe that the value of $c$  modifies the  trade-off computational error versus statistical error we are facing when choosing $n_c(B)$, and these two sources of errors converge  to zero at the same speed when 
\begin{align}\label{eq:cond_nj}
0<\liminf_{B\rightarrow\infty}\frac{n_c(B)}{B^{\frac{(1+\alpha)(1+2c)}{1+3\alpha}}}\leq \limsup_{B\rightarrow\infty}\frac{n_c(B)}{B^{\frac{(1+\alpha)(1+2c)}{1+3\alpha}}}<\infty
\end{align}
in which case 
$$
\|\tilde{\theta}_{c,B}-\theta_\star\|=\bigO_\P\Big(B^{-\frac{\alpha(1+2c)}{1+3\alpha}}\Big).
$$
Therefore, initializing  Algorithm \ref{algo:GD1} with a starting value that converges to $\theta_\star$ (i.e.~having $c>0$) allows to improve the rate at which we learn $\theta_\star$. In particular, for $c=0$ we recover the result of Corollary \ref{cor:1step}, and the larger $c$ the faster is the convergence rate of $\tilde{\theta}_{c,B}$.

\subsubsection{Understanding the convergence rate given in Theorem \ref{thm:rate_GD_main}}

The convergence rate  given in Theorem \ref{thm:rate_GD_main} is more easily understood  by studying an alternative (but less practical, see below) estimator of $\theta_\star$.

To define this estimator let $\delta_{\alpha'}=2\alpha'/(1+3\alpha')$,   $c_j=(1-\delta_{\alpha'}^j)\alpha'/(1+\alpha')$ for all $j\geq 0$ and let $\theta_0\in\R^d$. In addition, let $J\in\mathbb{N}$ and $\{\kappa_{J,j}\}_{j=1}^J$ be such that $\sum_{j=1}^J\kappa_{J,j}=1$ and such that $\kappa_{J,j}>0$ for all $j\in\{1,\dots,J\}$. Finally, for all $j\in \{1,\dots,J\}$ and using the convention that $\tilde{\theta}_{j-1,B,J}=\theta_0$ for all $B\in\mathbb{N}$ when $j=1$, let
$$
(\tilde{\theta}_{j,B,J}, \tilde{B}_j)=\mathrm{Algorithm\, \ref{algo:GD1} }\Big(\tilde{\theta}_{j-1,B,J},n_{c_{j-1}}(B),\lfloor \kappa_{J,j} B\rfloor, 0, \beta\Big),\quad\forall B\in\mathbb{N}
$$
with the function $n_{c_{j-1}}$ such that \eqref{eq:cond_nj}  holds (with $c=c_{j-1}$). Then, assuming that \ref{assumption2} holds for $\alpha=\alpha'$ and using the   computations of Section \ref{subsubsec:fast}, it is easily checked that for all $j\in\{1,\dots,J\}$ we have $\|\tilde{\theta}_{j,B,J}-\theta_\star\|=\bigO_\P(B^{-c_j})$, where
$$
B^{-c_j}=\max\Big(B^{-c_{j-1}}\sqrt{n_{c_{j-1}}(B)/B}, n_{c_{j-1}}(B)^{-\frac{\alpha'}{1+\alpha'}}\Big),\quad\forall B\in\mathbb{N}.
$$
Informally speaking, and following the reasoning of Section \ref{subsubsec:fast}, this implies that  $\tilde{\theta}_{j,B,J}$ is computed from   $\tilde{\theta}_{j-1,B,J}$ using the number $n_{c_{j-1}}(B)$ of elements of $(Z_i)_{i\geq 1}$ that maximizes the speed at which $\|\tilde{\theta}_{j,B,J}-\theta_\star\|$ converges to zero.  Using the definition of $(c_j)_{j\geq 1}$, it is readily checked   that, for all $j\in\{1,\dots,J\}$, 
$$
\|\tilde{\theta}_{j,B,J}-\theta_\star\|=\bigO_\P\Big(B^{-\frac{ \alpha'}{1+\alpha'}(1-\delta_{\alpha'}^j)}\Big)
$$
while Theorem \ref{thm:rate_GD_main}  establishes that $\|\hat{\theta}_{j,B}-\theta_\star\|=\bigO_\P\big( B^{-\frac{\alpha'}{1+\alpha'}(1-\delta^i)}\big)$ for $\delta\in (\delta_{\alpha'},1)$.

In addition to give some insight on this latter result, the above calculations show that   $\tilde{\theta}_{j,B,J}$ converges to $\theta_\star$ (slightly) faster than   $\hat{\theta}_{j,B}$ for any $j\in\{1,\dots,J\}$. However, these alternative estimators have the important drawback to require that the computational budget  allocated to each call  of Algorithm \ref{algo:GD1}  is chosen beforehand,  and in a way that  depends on the chosen value of $J$. By contrast, in Algorithm \ref{algo:GD2} this allocation  is done automatically and does not depend on $J$, and if only $j<J$ calls to Algorithm \ref{algo:GD1} are made then the algorithm returns the estimate of $\theta_\star$ we would have obtained if we had chosen $J=j$. In our calculations, this desirable properties of  Algorithm \ref{algo:GD2} are obtained at the price of imposing that $\delta$ is strictly bigger than $\delta_{\alpha'}$.

\subsection{Validity of GD-BLS for solving noisy optimization problems: sketch of proof}

We end this section with a sketch of proof of the validity of  GD-BLS for solving noisy optimization problems, the goal being to provide the reader with some insights about how  Assumptions \ref{assumption1}-\ref{assumption2} and  the properties of the GD-BLS algorithm are used to prove our main results. Specifically, in this subsection we let $\theta_0\in\R^d$ and consider   the problem of solving the optimization problem \eqref{eq:optim_prob} by applying the GD-BLS algorithm (i.e.~Algorithms \ref{algo:GD}-\ref{algo:Back})  on the random function $F_n$, using  $\theta_0$ as starting value. We denote by $(\theta^{(n)}_t)_{t\geq 1}$ the resulting sequence of iterates and below we show, in an informal way, that the results of Sections \ref{sub:prelim_assume}-\ref{sub:prelim_gd} imply that $\|\lim_{t\rightarrow\infty}\theta_t^{(n)}- \theta_\star\|=\smallo_\P(1)$.

To this aim remark first that, by using the descent property of GD-BLS, all the elements of $(\theta^{(n)}_t)_{t\geq 1}$  belong to the (random) set $\tilde{\Theta}_{0,n}:=\{\theta\in\R^d:\, F_n(\theta)\leq F_n(\theta_0)\}$. Next, by using Lemma \ref{lemma:uniform_conv} and under \ref{assumption1}, it can be proved  that there exists a compact and convex set $\tilde{\Theta}_0$ such that   $\tilde{\Theta}_{0,n}\subset \tilde{\Theta}_0$ with $\P$-probability tending to one. To proceed further we let 
\begin{align*}
&\Theta_{0,n}=\Big\{\theta\in\R^d:\,\exists \theta'\in \tilde{\Theta}_{0}\text{ such that }\|\theta-\theta'\|\leq \sup_{\tilde{\theta}\in \tilde{\Theta}_{0}}\|\nabla   F_n(\tilde{\theta})\|\Big\}\\
&\Theta_0=\Big\{\theta\in\R^d:\,\exists \theta'\in \tilde{\Theta}_0\text{ such that }\|\theta-\theta'\|\leq \sup_{\tilde{\theta}\in \tilde{\Theta}_0}\| \nabla   F(\tilde{\theta})\|+1\Big\}
\end{align*}
and remark that, by Lemma   \ref{lemma:uniform_conv}, with $\P$-probability tending to one we have $\Theta_{0,n}\subset \Theta_0$. Under \ref{assumption1} the set $\Theta_0$ is compact and convex  and thus, by   Lemma  \ref{lemma:assume1} and Corollary \ref{cor:convex}, it follows that with $\P$-probability tending to one the function $F_n$ is strictly convex and $L$-smooth on $\Theta_0$ (with $L\in(0,\infty)$ a finite constant). Without loss of generality we can assume that $\Theta_0$ is sufficiently large so that $\theta_\star$ belongs to the interior of this set, in which case   $\|\hat{\theta}_{\Theta_0,n}-\theta_\star\|=\smallo_\P(1)$ by Lemma \ref{lemma:MLE}. To sum-up, with $\P$-probability tending to one (i) the function $F_n$ is strictly convex and $L$-smooth on the compact and convex set $\Theta_{0}$, (ii)  $ F_n(\hat{\theta}_{\Theta_0,n})=\inf_{\theta\in\Theta_0}F_n(\theta)$ and (iii)  the sequence of iterates $(\theta^{(n)}_t)_{t\geq 1}$ generated by the GD-BLS algorithm applied on $F_n$ is a sequence in $\tilde{\Theta}_0$. Then, by using Propositions \ref{prop:GD_result_0}-\ref{prop:GD_result}, it readily follows  that  $\|\lim_{t\rightarrow\infty}\theta_t^{(n)}- \hat{\theta}_{\Theta_0,n}\|=\smallo_\P(1)$, and since $\|\hat{\theta}_{\Theta_0,n}-\theta_\star\|=\smallo_\P(1)$   we can conclude that   $\|\lim_{t\rightarrow\infty}\theta_t^{(n)}- \theta_\star\|=\smallo_\P(1)$.

\section{Numerical experiments\label{sec:example}}

In this section we study the behaviour of $\E[\|\hat{\theta}_{J,B}-\theta_\star\|]$ and of $\E[J_{B}]$ as $B$ increases, as estimated from 100 independent realizations of $(Z_i)_{i\geq 1}$. In all the experiments presented below,  Algorithm \ref{algo:GD2} is implemented with $\beta=0.5$ and with $C_{\mathrm{grad}}=C_{\mathrm{eval}}=\kappa=\tau=1$, and with   $\lceil\kappa B^{\gamma_j}\rceil$ replaced by $\max(100, \lceil\kappa B^{\gamma_j}\rceil)$ for all $j\geq 1$. It is trivial that Theorem \ref{thm:rate_GD_main} remains valid with this  modification of the algorithm, which is introduced to prevent to minimize  $F_{n_j(B)}$ for a value of $n_j(B)$ that is  too small. In addition, following the discussion of  Section \ref{sec:GD2},   we let $J=10^4$ so that  $J_{B}<J$ with probability close to one. Finally, in what follows we denote by $\rho_{J_B}$ the coefficient  of correlation between the variables $(\E[J_{B}])_{B\geq 1}$ and $(\log(B))_{B\geq 1}$, as     estimated from 100 independent realizations of $(Z_i)_{i\geq 1}$. Remark that a value of $\rho_{J_B}$ close to one implies that $\E[J_{B}]\approx a+b\log(B)$ for some constants $a\in\R$ and $b\in(0,\infty)$.

\subsection{Example of Proposition \ref{prop:Poisson}\label{sub:ex1}}

We first consider the univariate optimization problem defined in Proposition \ref{prop:Poisson}, which we recall amounts to solving the $d=1$ dimensional noisy optimization problem \eqref{eq:optim_prob} with  $Z=(X,Y)$, where $X$ and $Y$ are two independent Poisson random variables such that $\E[X]=\E[Y]=1$, and with the function $f:\R^d\times\setZ\rightarrow\R$  defined by
$$
f(\theta,z)=-z_2z_1\theta+\exp(\theta z_1),\quad \theta\in\R,\quad z\in\setZ:=\R^2.
$$
As shown above in Proposition \ref{prop:Poisson3},  for this optimization problem Assumptions \ref{assumption1}-\ref{assumption2} hold with $\alpha=1$    and, to the best of our knowledge, SG is not guaranteed to converge  when used to solve it (see Propositions \ref{prop:Poisson}-\ref{prop:Poisson22}). Simple computations show that the global minimizer of the resulting objective function $F$ is $\theta_\star=0$. For this example we let $\theta_0=1$ and  $\alpha'=1$, in which case Theorem \ref{thm:rate_GD_main} imposes to have $\delta\in (0.5,1)$.

\begin{figure}
\centering
\includegraphics[scale=0.25]{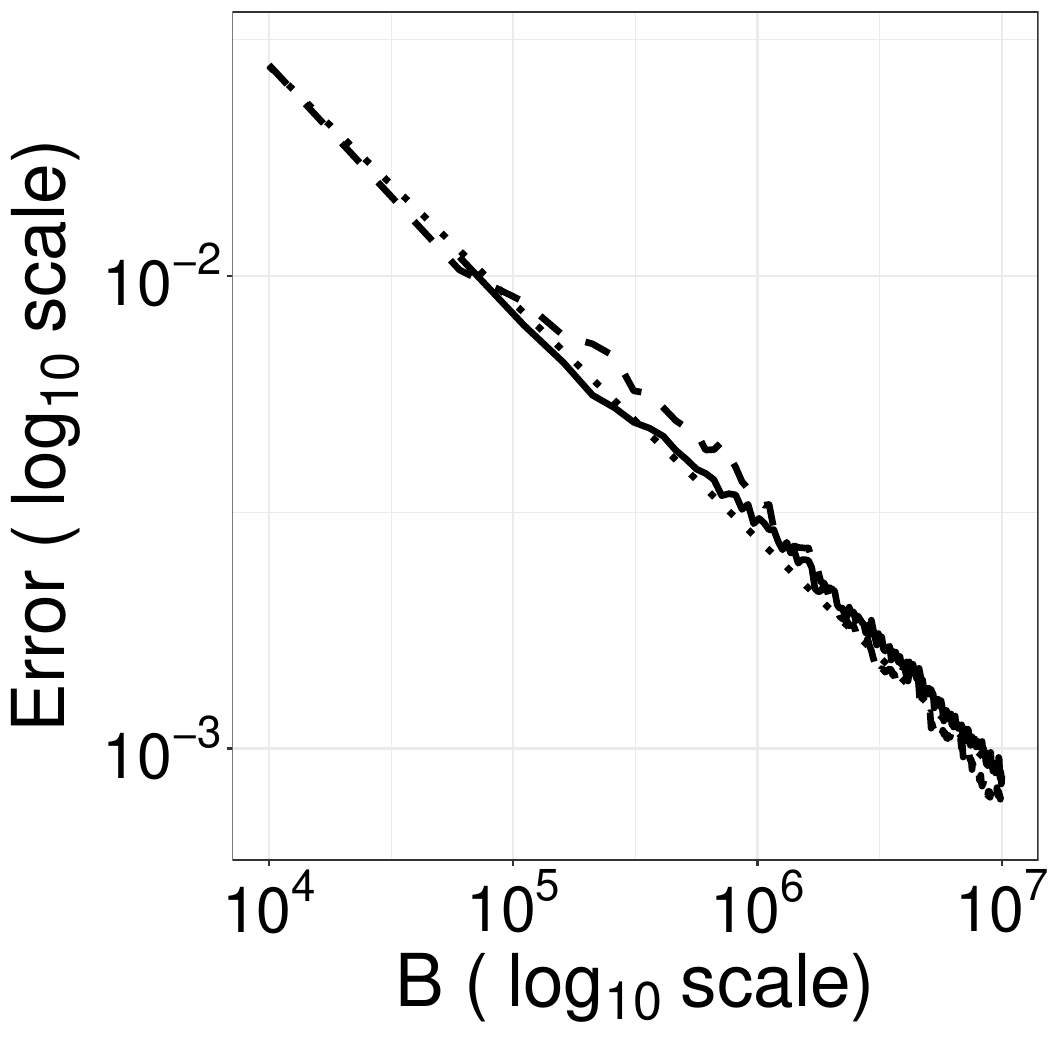} \includegraphics[scale=0.25]{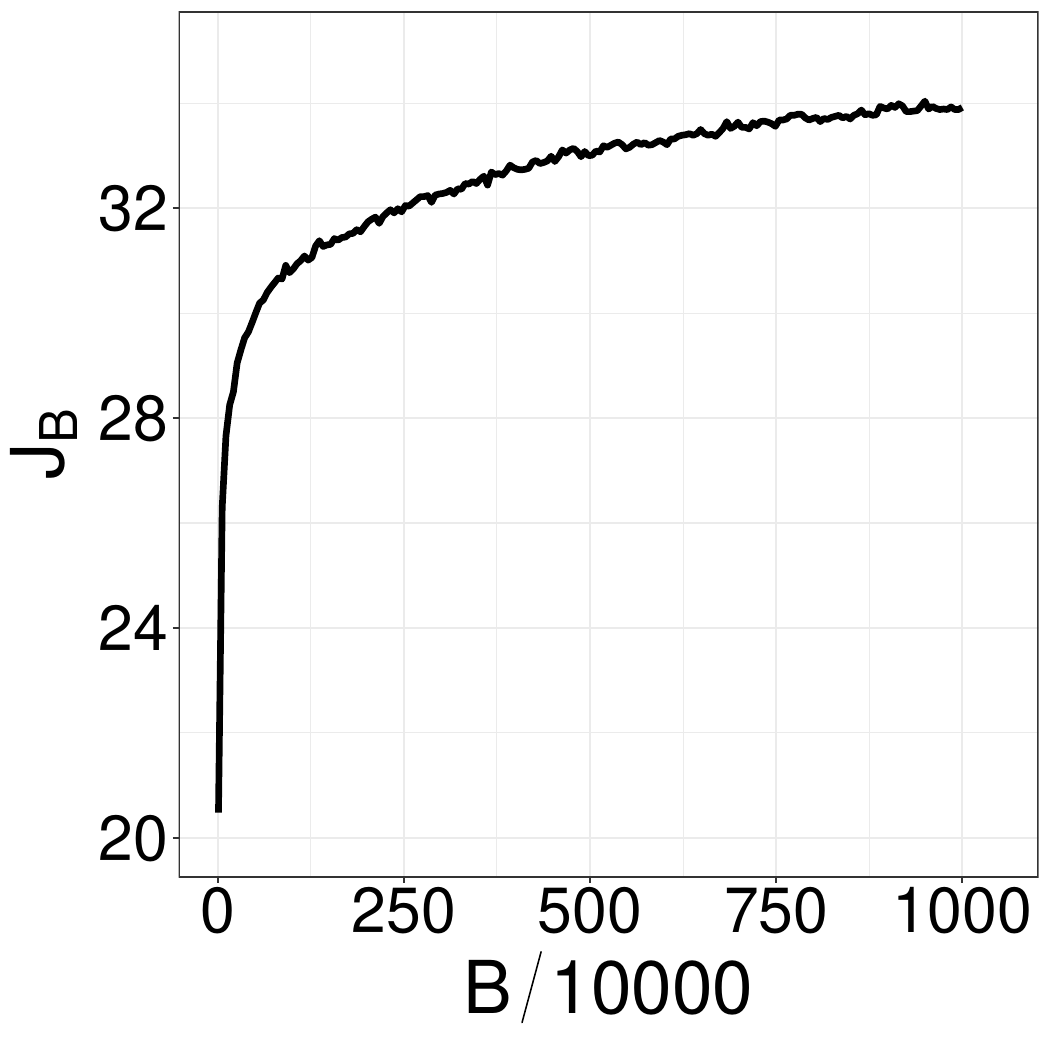} \includegraphics[scale=0.25]{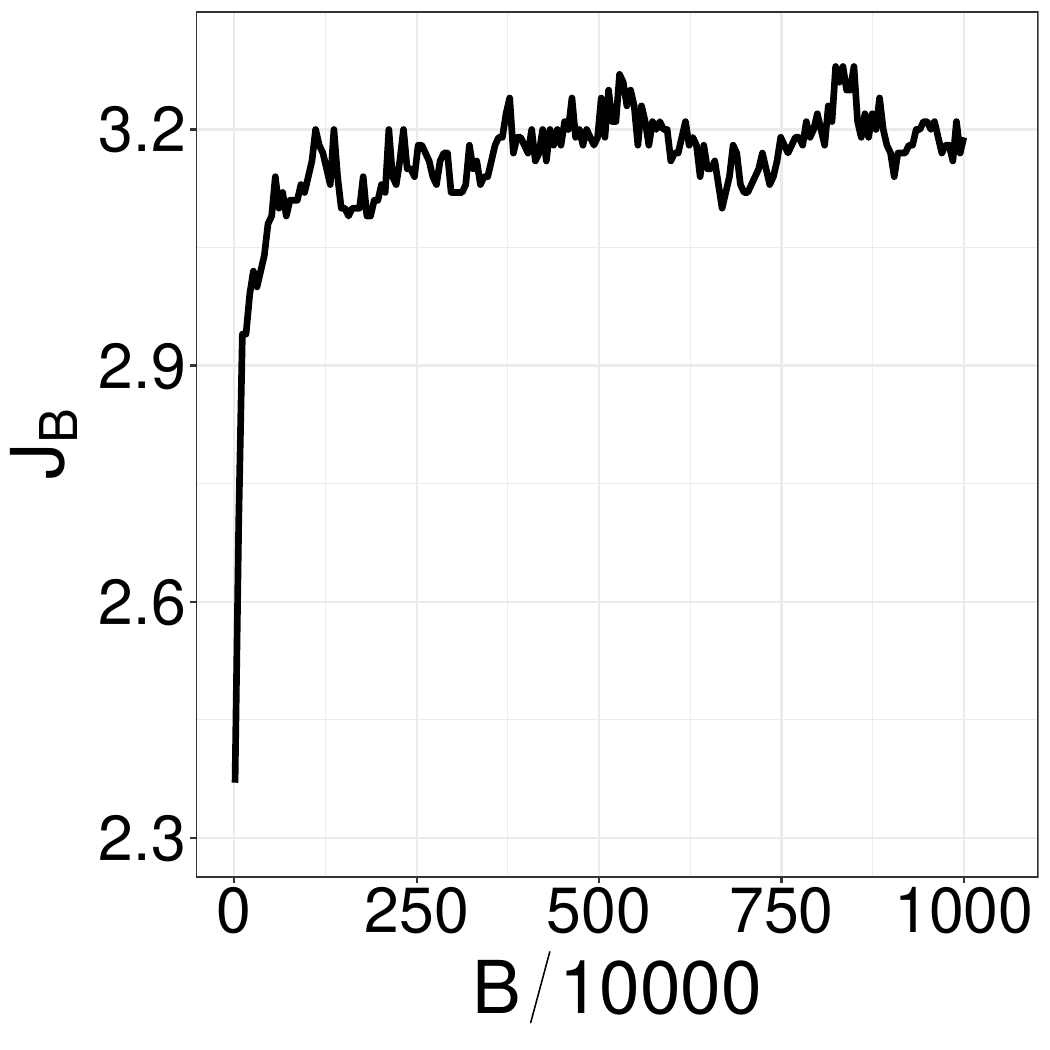} 
\caption{Results for the example of Section \ref{sub:ex1}. The left plot shows $\E[\|\hat{\theta}_{J,B}-\theta_\star\|]$ as a function of $B$, where the solid line is for $\delta=0.95$ and the dashed line for $\delta=0.51$, and with the two dotted  line representing the $B^{-1/2}$ convergence rate. The middle and right plots show the evolution of $\E[J_{B}]$  as a function of $B$ for $\delta=0.95$ (middle plot)  and  for $\delta=0.51$ (right plot). All the results are obtained from 100 independent  realizations of $(Z_i)_{i\geq 1}$. \label{Figure:example1}}
\end{figure}

Figure \ref{Figure:example1} shows,  for $\delta=0.51$ and for $\delta=0.95$, the evolution of $\E[\|\hat{\theta}_{J,B}-\theta_\star\|]$ and of $\E[J_{B}]$ as  $B$ increases. For the two considered values of $\delta$  the observed   rate at which $\E[\|\hat{\theta}_{J,B}-\theta_\star\|]$  decreases to zero is  indistinguishable from the  optimal $B^{-1/2}$ convergence rate. In addition, in  this experiment the value of $\delta$ appears to have little impact on $\E[\|\hat{\theta}_{J,B}-\theta_\star\|]$. As expected, we see in Figure \ref{Figure:example1} that the number of calls to Algorithm \ref{algo:GD1} is much larger for $\delta=0.95$ than for $\delta=0.51$, with a value of $\E[J_{B}]$ ranging from 20.48 to 34.04 for the former choice of $\delta$ and  ranging from 2.37 to 3.28 for the latter. Visually, we observe (at least for $\delta=0.95$) that  $\E[J_{B}]$ grows with $B$ as $\log (B)$. The coefficient of correlation between the variables $(\E[J_{B}])_{B\geq 1}$ and $(\log (B))_{B\geq 1}$ confirms this observation,  since for this experiment we have  $\rho_{J_B}=0.99$ when   $\delta=0.95$ and  $\rho_{J_B}=0.78$ when $\delta=0.51$.

\subsection{A multivariate extension of the example of Proposition \ref{prop:Poisson}\label{sub:ex2}}

We now consider  the $d=20$ dimensional extension of the example of Proposition \ref{prop:Poisson} obtained by letting, in  \eqref{eq:optim_prob},    $Z=(W,X,Y)$ be such that 
$$
W\sim\mathrm{Poisson}(1),\quad X|W\sim\mathcal{U}([-1,1]^{d-1}),\quad Y|(W,X)\sim \mathrm{Poisson}(a^\top _\star X)
$$
for some $a_\star\in\R^{d-1}$, and  the function $f:\R^d\times\setZ\rightarrow\R$  be defined by
$$
f(\theta,z)=-z_2\theta^\top z_1+\exp(\theta^\top z_1),\quad \theta\in\R^d,\quad z_1\in\R^d,\quad z_2\in\R,\quad   z=(z_1,z_2)\in\setZ:=\R^{d+1}.
$$

It is easily checked that the resulting objective function $F$ is such that \ref{assumption1}-\ref{assumption2} hold with $\alpha=1$ and that $\theta_\star=(0,a_\star)$.  Compared to the example of Proposition \ref{prop:Poisson}, we expect that by increasing the dimension of the search space we make the optimization problem more challenging and thus that the conclusions of Proposition \ref{prop:Poisson}-\ref{prop:Poisson22} hold  for this example. For this experiment the value $a_\star$ is a random draw  from the $\mathcal{N}_{d-1}(0,\ind_{d-1})$ distribution and, as in the previous subsection, the results are presented for $(\alpha',\delta)=(1,0.51)$ and for $(\alpha',\delta)=(1,0.95)$. Finally, Algorithm \ref{algo:GD2} is implemented using $\theta_0=(1,\dots,1)$ as starting value.

From Figure \ref{Figure:example2}, which shows the evolution of $\E[\|\hat{\theta}_{J,B}-\theta_\star\|]$ and of $\E[J_{B}]$ as  $B$ increases, we see that the behaviour of these two quantities is similar to what we observed in the previous subsection. The main difference is however that, because of the larger dimension of $\theta$, the $B^{-1/2}$ convergence rate for $\hat{\theta}_{J,B}$ is observed only for $B$ large enough (roughly for $B>10^{6.5}$). The study of the dependence of the performance of Algorithm \ref{algo:GD2} to the dimension $d$ of $\theta$ is left for future research. Finally,  in Figure \ref{Figure:example2} we also observe that $\E[J_{B}]$ appears to increase  with $B$ as $\log(B)$,  and indeed,  for this example, the coefficient of correlation between $(\E[J_{B}])_{B\geq 1}$ and $(\log(B))_{B\geq 1}$  is $\rho_{J_B}=0.98$ for   $\delta=0.95$ and  $\rho_{J_B}=0.70$ for $\delta=0.51$.

\begin{figure}
\centering
\includegraphics[scale=0.25]{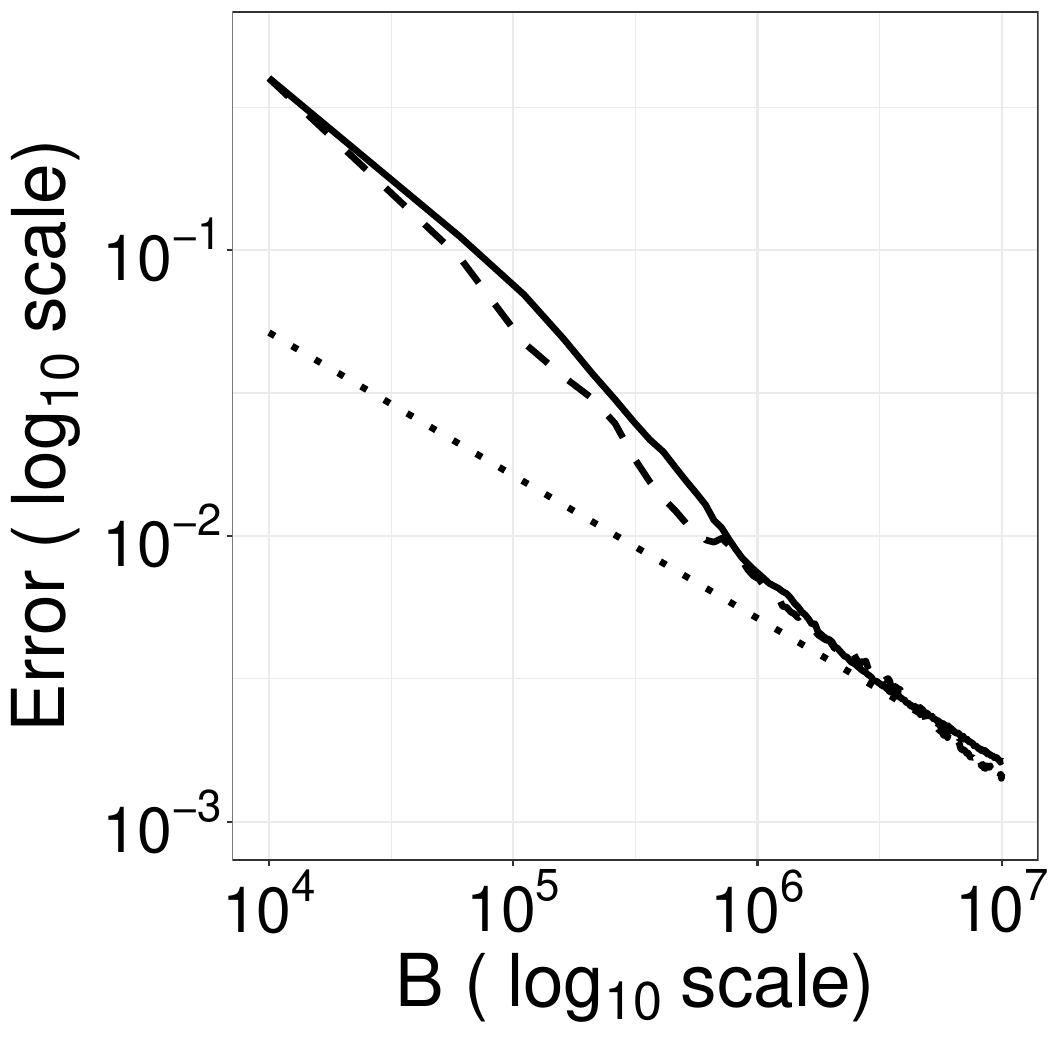}\includegraphics[scale=0.25]{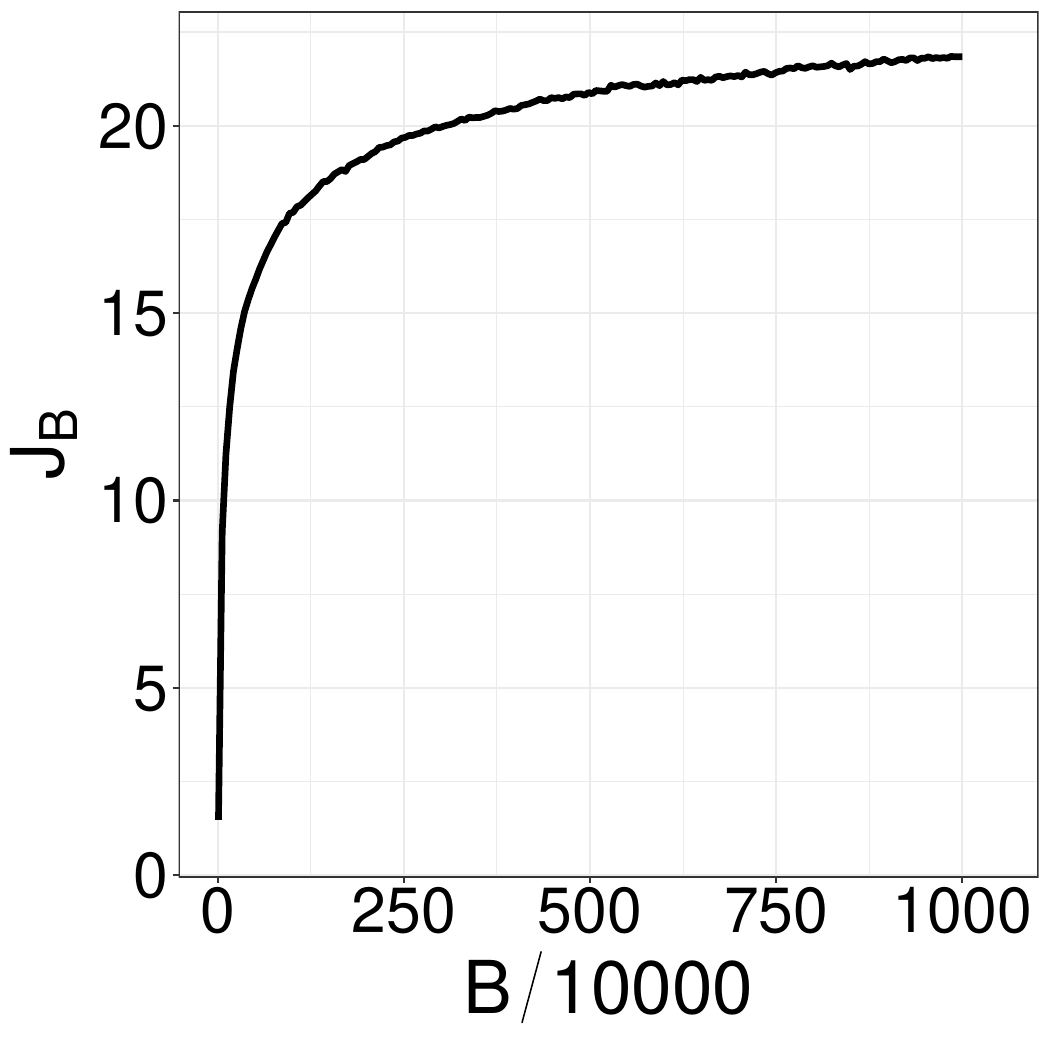}\includegraphics[scale=0.25]{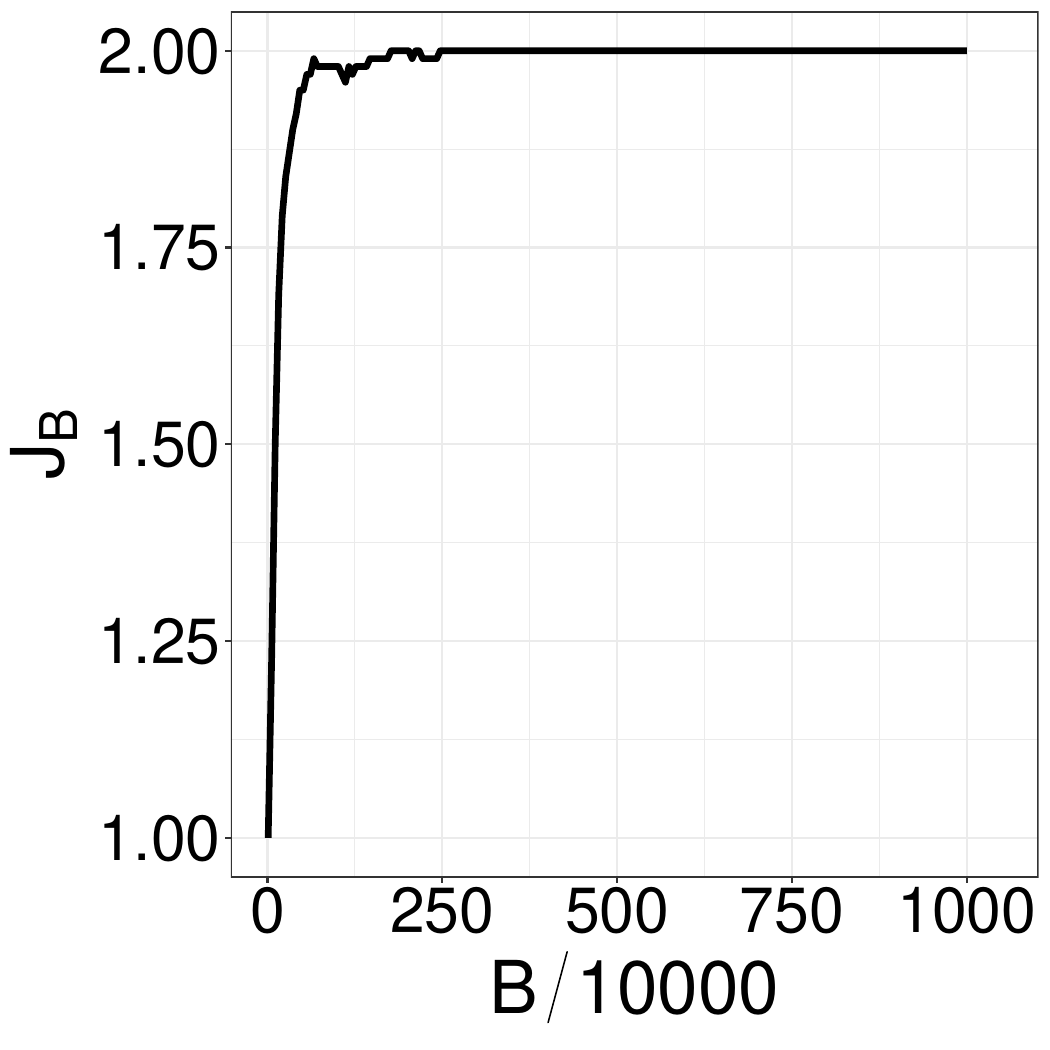} 
\caption{Results for the example of Section \ref{sub:ex2}.  The left plot shows $\E[\|\hat{\theta}_{J,B}-\theta_\star\|]$ as a function of $B$, where the solid line is for $\delta=0.95$ and the dashed line for $\delta=0.51$, and with the  dotted  line  representing the $B^{-1/2}$ convergence rate. The middle and right plots show the evolution of $\E[J_{B}]$  as a function of $B$ for $\delta=0.95$ (middle plot)  and  for $\delta=0.51$ (right plot). All the results are obtained from 100 independent realizations of $(Z_i)_{i\geq 1}$ \label{Figure:example2}}
\end{figure}

\begin{figure}[t]
\centering
\includegraphics[scale=0.25]{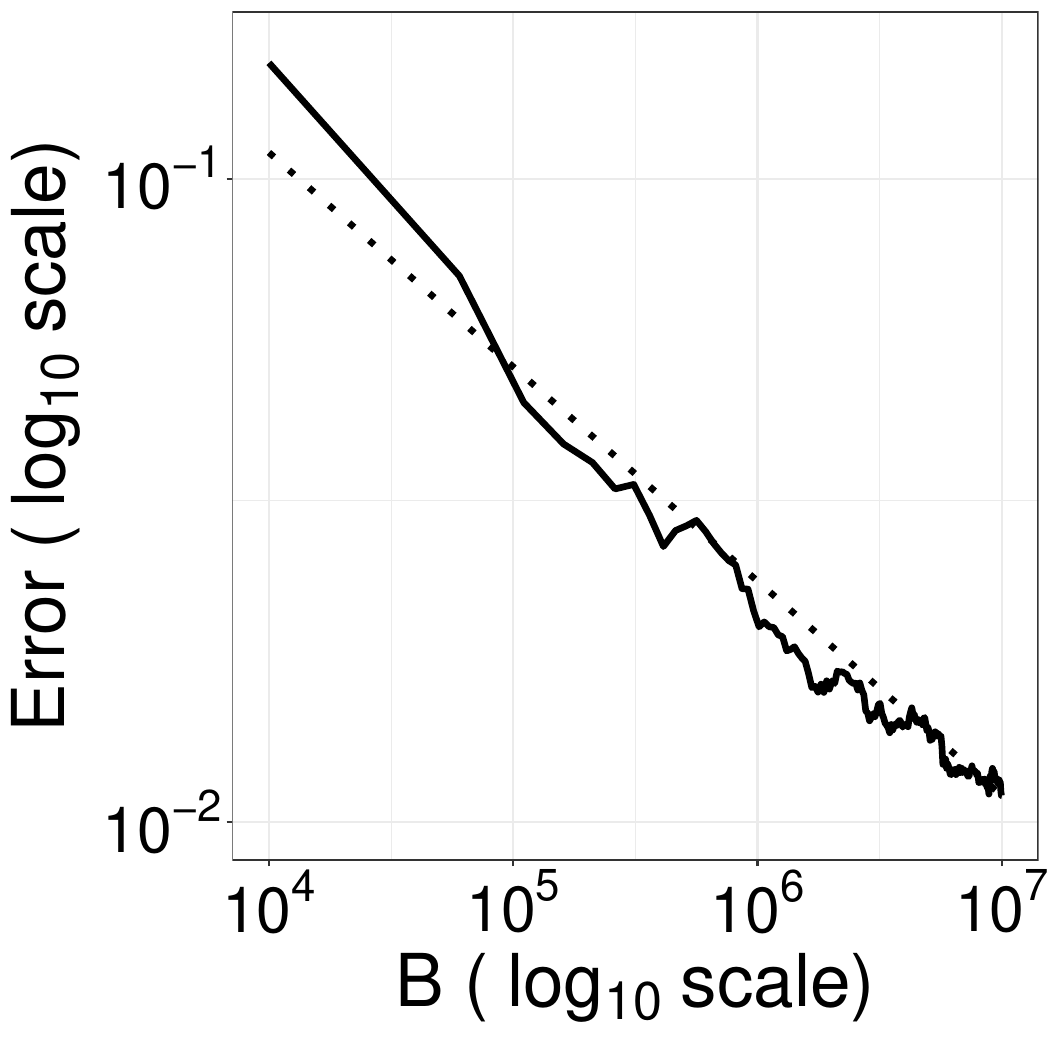}\includegraphics[scale=0.25]{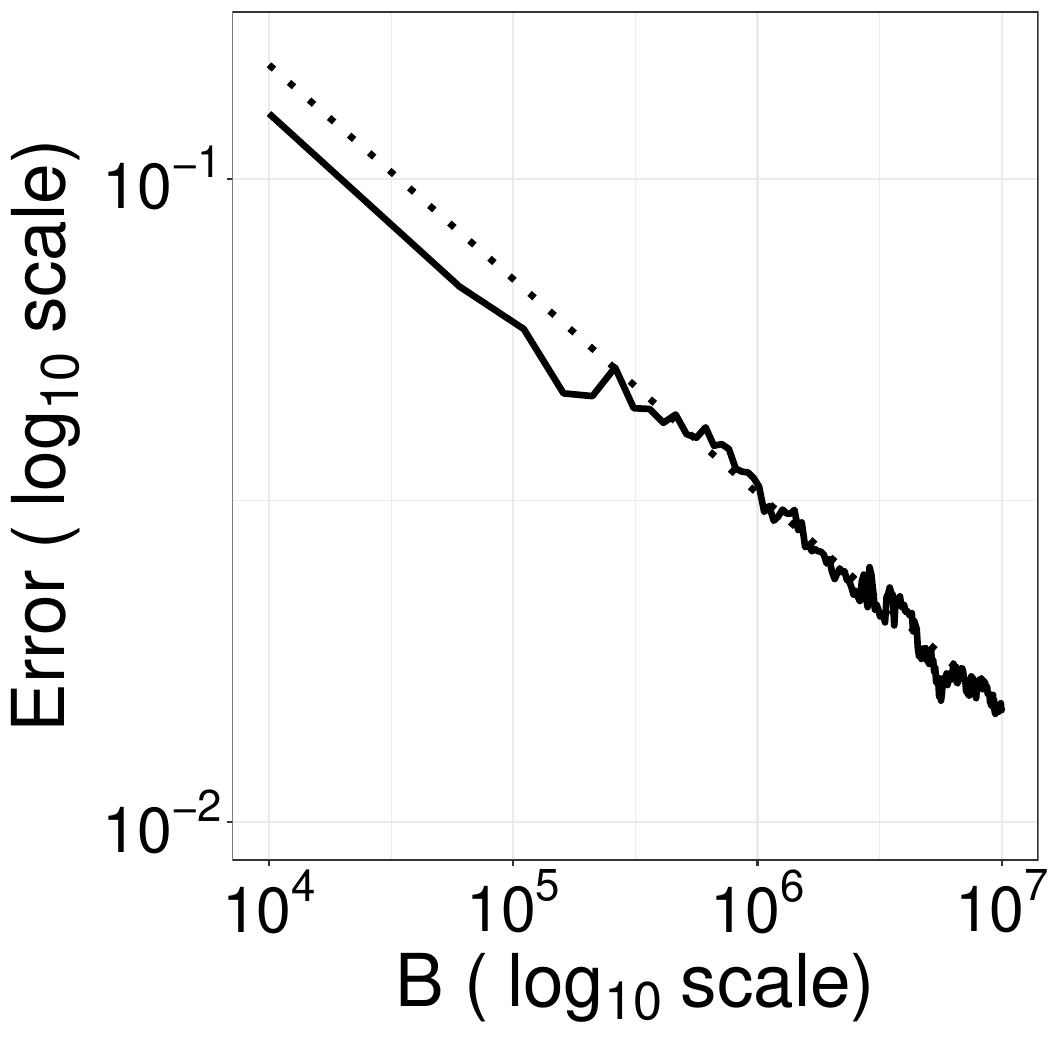}\includegraphics[scale=0.25]{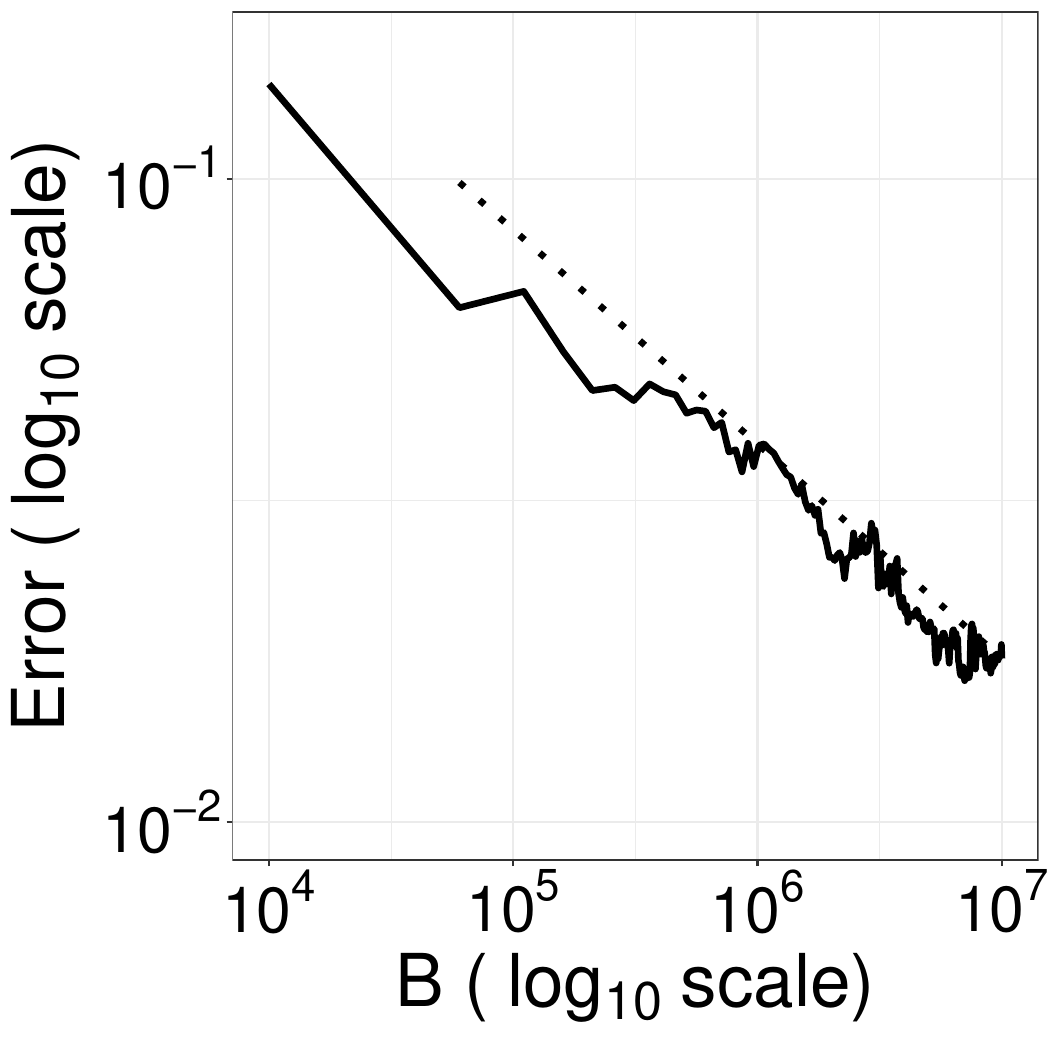} 

\includegraphics[scale=0.25]{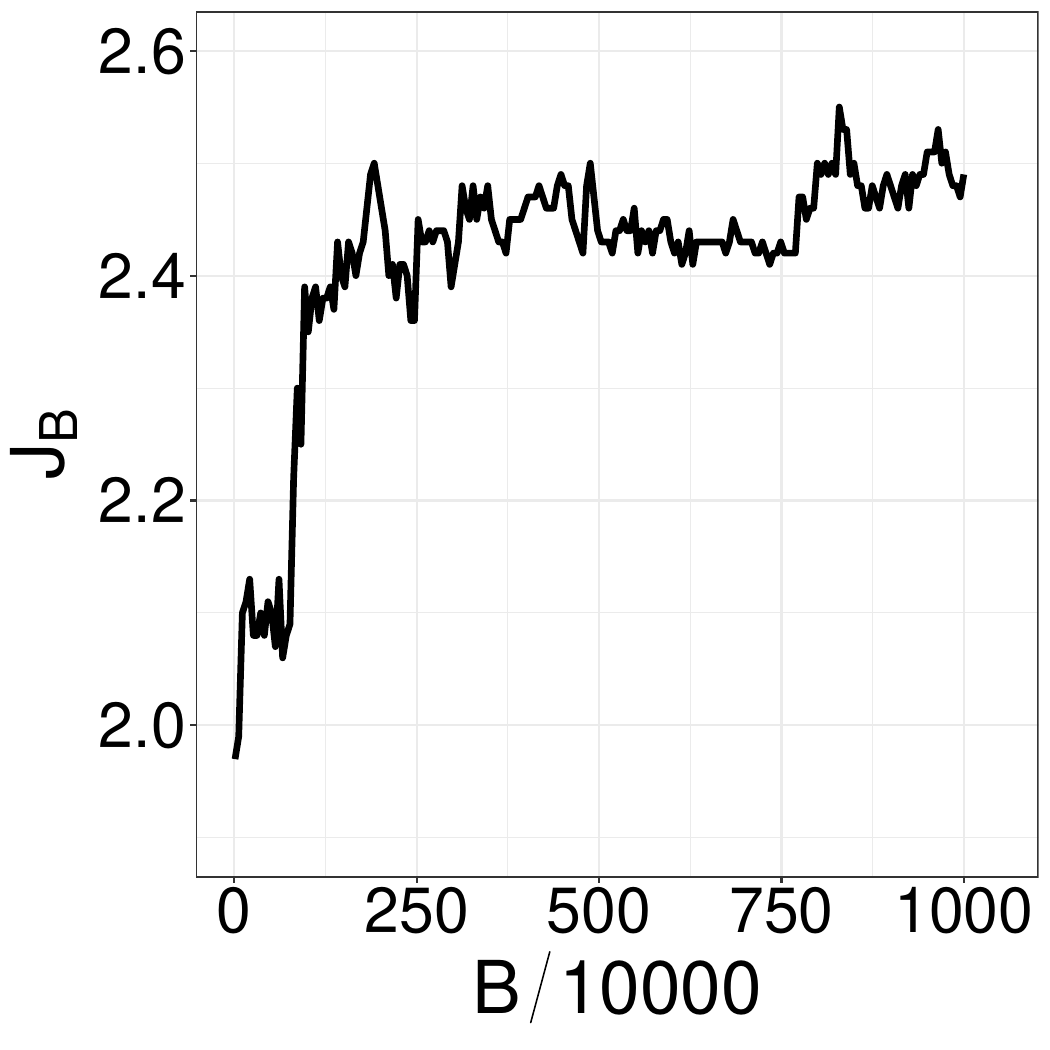}\includegraphics[scale=0.25]{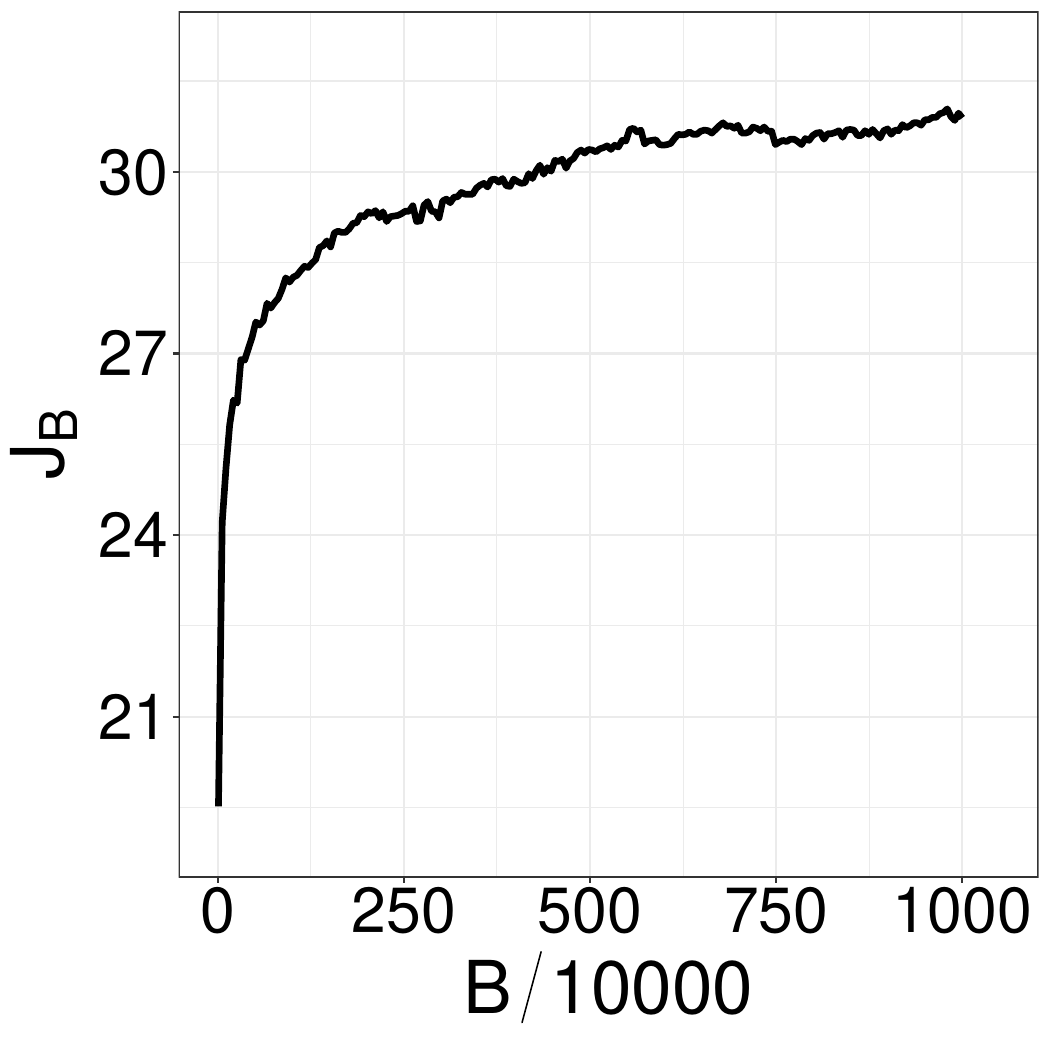} \includegraphics[scale=0.25]{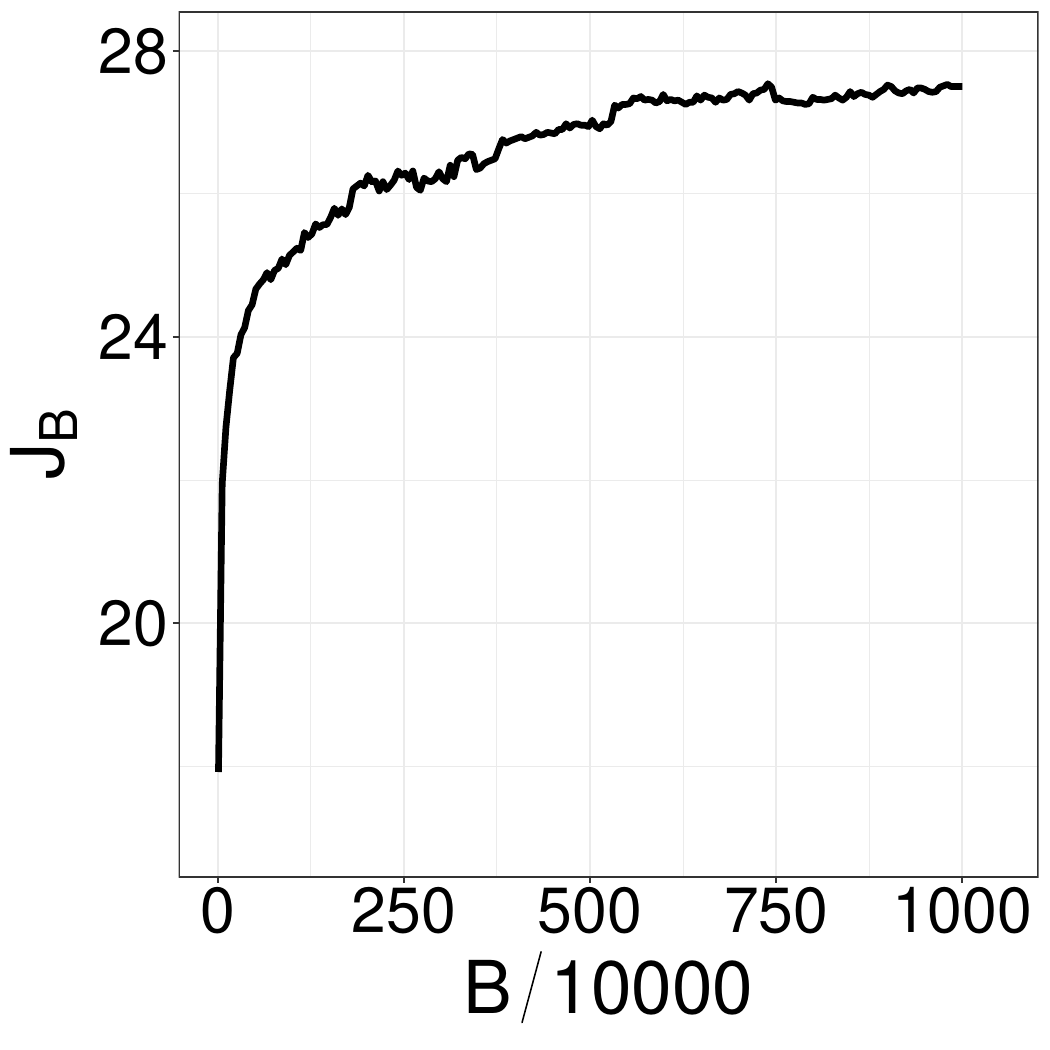} 
\caption{Results for the example of Section \ref{sub:ex3}.  The top plots shows $\E[\|\hat{\theta}_{J,B}-\theta_\star\|_{0.1}]$ as a function of $B$ and  the bottom   plots  show    $\E[J_{B}]$ as a function of $B$. The left plots are for $(\alpha',\delta)=(0,5,0.41)$, the middle plots  for $(\alpha',\delta)=(0.5,0.95)$ and the right plots for $(\alpha',\delta)=(1,0.95)$. In the top plots the dotted  lines  represent  the   $B^{-1/3}$ convergence rate and all the results are obtained  from 100 independent realizations of $(Z_i)_{i\geq 1}$ \label{Figure:example3}}
\end{figure}

\subsection{Example of Proposition \ref{prop:Poisson} with  an extra heavy tailed noise\label{sub:ex3}}

We finally consider the example of Proposition \ref{prop:Poisson} with an extra heavy tailed noise. More precisely, we aim at solving the $d=1$  noisy optimization problem \eqref{eq:optim_prob} with  $Z=(W, X,Y)$, where $W\sim t_\nu(0,1)$ and where $X$ and $Y$ are  two independent Poisson random variables which are independent  of $W$ and such that $\E[X]=\E[Y]=1$,  and with the function $f:\R^d\times\setZ\rightarrow\R$  defined by
$$
f(\theta,z)=-z_3z_2\theta+\exp(\theta z_2)+z_1\theta,\quad \theta\in\R,\quad z\in\setZ:=\R^3.
$$

We let $\nu=1.501$ so that \ref{assumption2} holds for any $\alpha<\nu-1=0.501$. Remark that since $\E[W]=0$ it follows that the resulting objective function  $F$ is the same as for the example of Proposition \ref{prop:Poisson}, and thus $\theta_\star=0$. It is also immediate to see that the conclusions of Proposition \ref{prop:Poisson}-\ref{prop:Poisson22} remain  valid for this example, and therefore to be best of our knowledge SG is not guaranteed to converge when applied to solve it. In addition, the results of \citet{nguyen2024improved}  for clipped SG do not apply to this optimization problem, notably because the function $F$ is not $L$-smooth (see Proposition \ref{prop:Poisson}). For this example we implement Algorithm \ref{algo:GD2}   with $\theta_0=1$ and the results are presented for  $\alpha'=0.5$ and for $\alpha'=1$. Remark that \ref{assumption2} holds with $\alpha=\alpha'$ for the former value of $\alpha'$ but not for the latter. Remark also that, by Theorem \ref{thm:rate_GD_main},   we must have $\delta\in (0.4,1)$ when $\alpha'=0.5$ and $\delta\in(0.5,1)$ when $\alpha'=1$.

Due to the heavy tailed noise, for this example it is not guaranteed that we have $\E[\|\hat{\theta}_{J,B}\|]<\infty$ and the numerical experiments suggest that this may indeed not be the case. For this reason, below we study, in Figure \ref{Figure:example3}, the behaviour  as $B$ increases   of $\E[\|\hat{\theta}_{J,B}-\theta_\star\|_{0.1}]$, the expected 10\% trimmed mean of  $\|\hat{\theta}_{J,B}-\theta_\star\|$.

The plots in the first two columns of Figure \ref{Figure:example3} are obtained for $(\alpha',\delta)=(0.5,0.41)$ and for $(\alpha',\delta)=(0.5,0.95)$. From these plots  we see that  the convergence rate of $\hat{\theta}_{J,B}$ is undistinguishable from the $B^{-\frac{\alpha'}{1+\alpha'}}=B^{-1/3}$ convergence rate, confirming the result of Theorem \ref{thm:rate_GD_main}. As for the examples of Sections \ref{sub:ex1}-\ref{sub:ex2}, the choice of $\delta$ appears to have only a limited impact on the estimation error. We also observe that for the two considered values  of $\delta$ the quantity  $\E[J_{B}]$ seems to grow with $B$ as $\log (B)$, an observation which is confirmed  by the fact that $\rho_{J_B}= 0.85$ for $\delta=0.41$ and  $\rho_{J_B}=0.98$ for  $\delta=0.95$. The plots in last column  of Figure \ref{Figure:example3} study the behaviour of $\hat{\theta}_{J,B}$ when $(\alpha',\delta)=(1,0.95)$, in which case   \ref{assumption2} does not hold for $\alpha=\alpha'$. If    $\hat{\theta}_{J,B}$ appears to roughly converge to $\theta_\star$ at speed $B^{-1/3}$, we observe that the convergence becomes slower for values of $B$ greater than $10^{6.7}$. Remark that, from the   discussion of Section \ref{sub:conv}, it was expected that the convergence rate of $\hat{\theta}_{J,B}$ is slower when \ref{assumption2} does not hold for the chosen value of $\alpha'$ than when it does. Finally, for this value of $(\alpha',\delta)$ the quantity   $\E[J_{B}]$ also seems to grow as $\log (B)$, an observation which is confirmed by the large value $\rho_{J_B}=0.98$ obtained for the coefficient of correlation between $(\E[J_{B}])_{B\geq 1}$ and $(\log (B))_{B\geq 1}$.

\section{Concluding remarks\label{sec:conclusion}}

In this paper we show, theoretically and empirically, that gradient descent with backtracking line search can be used to solve noisy optimization problems with a nearly optimal convergence rate, and without assuming that the objective function is globally $L$-smooth.

The proposed algorithm (Algorithm \ref{algo:GD2}) requires that GD-BLS can be called at most $J$ times, with $J$ a user-specified parameter. We conjecture that this condition can be removed and, assuming that \ref{assumption2} holds for $\alpha=\alpha'$, that the resulting estimator $\hat{\theta}_B$ of the global  minimiser $\theta_\star$  converges at speed $B^{-\frac{\alpha'}{1+\alpha'}(1-j_B)}$ for some sequence $(j_B)_{B\geq 1}$ such that $\lim_{B\rightarrow\infty}j_B=\infty$. Based on our numerical experiments, we predict that $j_B=a+b\log(B)$ for some constants $a\in[0,\infty)$ and $b\in(0,\infty)$ depending on the choice of   $\delta$. Confirming these results theoretically is left for future research.

Another interesting question that deserves further work  is the possibility to replace, in  Algorithm \ref{algo:GD2},   GD-BLS   by   GD-BLS with  Nesterov momentum (GD-BLS-NM). We recall the reader that, while the convergence of the former algorithm for computing $\min_{x\in \R^d}g(x)$ for a convex and $L$-smooth function $g$ is $1/T$, with $T$ the number of iterations,    GD-BLS-NM converges to the global minimum of $g$ at the faster (and optimal) $1/T^2$ rate \citep[see][Section 2.2]{nesterov2018lectures}. Based on this result, it is reasonable to expect that replacing GD-BLS  by   GD-BLS-NM will improve the non-asymptotic behaviour of the proposed procedure for computing $\theta_\star$. The extension of our convergence results to the case where GD-BLS-NM is used  is however non-trivial, notably because  it is  unclear if the aforementioned fast convergence rate of this algorithm remains  valid when the function $g$ is only locally $L$-smooth. The  $1/T$ convergence rate of GD-BLS  holds for such functions because  the backtracking procedure ensures that the estimate of $\min_{x\in \R^d}g(x)$ improves from one iteration to the next, and thus that the sequence in $\R^d$   generated by the algorithm stays in a compact set. By contrast,  GD-BLS-NM is not a descent algorithm, and proving that the sequence in $\R^d$ it generates remains in compact set is non-trivial.

Finally,  it is worth mentioning that, instead of backtracking line search, the procedure introduced by \citet{malitsky2019adaptive} can be used for choosing step-sizes $(v_t)_{t\geq 1}$ which guarantee that the resulting gradient descent algorithm generates a   sequence in $\R^d$ that  stays in a compact set, when the  objective function is convex. Under  the additional assumption that the function $f(\cdot,Z)$ is $\P$-a.s.~convex, it is  easily   checked that all the theoretical results presented in this work are valid if, instead of performing backtracking line search,  in  Algorithm \ref{algo:GD1} the step-sizes $(v_t)_{t\geq 1}$ are chosen as proposed by \citet{malitsky2019adaptive}. The key advantage of this approach is that, in the context of this work, it does not consume any computational budget and therefore allows to reduce the cost per iteration of Algorithm \ref{algo:GD1}. However, this procedure has the drawback to require to choose the initial element $v_1$ of the sequence of $(v_t)_{t\geq 1}$ which usually (and as confirmed by some unreported experiments) has a large impact on the behaviour of the algorithm.

\bibliographystyle{apalike}
\bibliography{complete}

\begin{thebibliography}{}

\bibitem[Amann et~al., 2008]{amann2005analysis}
Amann, H., Escher, J., and Brookfield, G. (2008).
\newblock {\em Analysis II}.
\newblock Springer.

\bibitem[Asi and Duchi, 2019]{asi2019stochastic}
Asi, H. and Duchi, J.~C. (2019).
\newblock Stochastic (approximate) proximal point methods: Convergence,
  optimality, and adaptivity.
\newblock {\em SIAM Journal on Optimization}, 29(3):2257--2290.

\bibitem[Bottou et~al., 2018]{bottou2018optimization}
Bottou, L., Curtis, F.~E., and Nocedal, J. (2018).
\newblock Optimization methods for large-scale machine learning.
\newblock {\em SIAM review}, 60(2):223--311.

\bibitem[Bubeck et~al., 2013]{bubeck2013bandits}
Bubeck, S., Cesa-Bianchi, N., and Lugosi, G. (2013).
\newblock Bandits with heavy tail.
\newblock {\em IEEE Transactions on Information Theory}, 59(11):7711--7717.

\bibitem[Devroye et~al., 2016]{Devroye}
Devroye, L., Lerasle, M., Lugosi, G., and Oliveira, R.~I. (2016).
\newblock {Sub-Gaussian mean estimators}.
\newblock {\em The Annals of Statistics}, 44(6):2695 -- 2725.

\bibitem[Faw et~al., 2022]{faw2022power}
Faw, M., Tziotis, I., Caramanis, C., Mokhtari, A., Shakkottai, S., and Ward, R.
  (2022).
\newblock The power of adaptivity in sgd: Self-tuning step sizes with unbounded
  gradients and affine variance.
\newblock In {\em Conference on Learning Theory}, pages 313--355. PMLR.

\bibitem[Fehrman et~al., 2020]{fehrman2020convergence}
Fehrman, B., Gess, B., and Jentzen, A. (2020).
\newblock Convergence rates for the stochastic gradient descent method for
  non-convex objective functions.
\newblock {\em The Journal of Machine Learning Research}, 21(1):5354--5401.

\bibitem[Garrigos and Gower, 2023]{garrigos2023handbook}
Garrigos, G. and Gower, R.~M. (2023).
\newblock Handbook of convergence theorems for (stochastic) gradient methods.
\newblock {\em arXiv preprint arXiv:2301.11235}.

\bibitem[Khaled and Richt{\'a}rik, 2020]{khaled2020better}
Khaled, A. and Richt{\'a}rik, P. (2020).
\newblock Better theory for sgd in the nonconvex world.
\newblock {\em arXiv preprint arXiv:2002.03329}.

\bibitem[Kim et~al., 2015]{kim2015guide}
Kim, S., Pasupathy, R., and Henderson, S.~G. (2015).
\newblock A guide to sample average approximation.
\newblock {\em Handbook of simulation optimization}, pages 207--243.

\bibitem[Malitsky and Mishchenko, 2019]{malitsky2019adaptive}
Malitsky, Y. and Mishchenko, K. (2019).
\newblock Adaptive gradient descent without descent.
\newblock {\em arXiv preprint arXiv:1910.09529}.

\bibitem[Nesterov, 2018]{nesterov2018lectures}
Nesterov, Y. (2018).
\newblock {\em Lectures on convex optimization}, volume 137.
\newblock Springer.

\bibitem[Newton et~al., 2024]{newton2024retrospective}
Newton, D., Bollapragada, R., Pasupathy, R., and Yip, N.~K. (2024).
\newblock A retrospective approximation approach for smooth stochastic
  optimization.
\newblock {\em Mathematics of Operations Research}.

\bibitem[Nguyen et~al., 2024]{nguyen2024improved}
Nguyen, T.~D., Nguyen, T.~H., Ene, A., and Nguyen, H. (2024).
\newblock Improved convergence in high probability of clipped gradient methods
  with heavy tailed noise.
\newblock {\em Advances in Neural Information Processing Systems}, 36.

\bibitem[Patel, 2022]{patel2022stopping}
Patel, V. (2022).
\newblock Stopping criteria for, and strong convergence of, stochastic gradient
  descent on bottou-curtis-nocedal functions.
\newblock {\em Mathematical Programming}, 195(1-2):693--734.

\bibitem[Patel et~al., 2022]{patel2022global}
Patel, V., Zhang, S., and Tian, B. (2022).
\newblock Global convergence and stability of stochastic gradient descent.
\newblock {\em Advances in Neural Information Processing Systems},
  35:36014--36025.

\bibitem[Royset and Szechtman, 2013]{royset2013optimal}
Royset, J.~O. and Szechtman, R. (2013).
\newblock Optimal budget allocation for sample average approximation.
\newblock {\em Operations Research}, 61(3):762--776.

\bibitem[Toulis and Airoldi, 2017]{Toulis2017}
Toulis, P. and Airoldi, E.~M. (2017).
\newblock {Asymptotic and finite-sample properties of estimators based on
  stochastic gradients}.
\newblock {\em The Annals of Statistics}, 45(4):1694 -- 1727.

\bibitem[Van~der Vaart, 2000]{VanderVaart2000}
Van~der Vaart, A.~W. (2000).
\newblock {\em Asymptotic {S}tatistics}.
\newblock Cambrige series in statistical and probabilistic mathematics.

\bibitem[Wang et~al., 2021]{wang2021convergence}
Wang, H., Gurbuzbalaban, M., Zhu, L., Simsekli, U., and Erdogdu, M.~A. (2021).
\newblock Convergence rates of stochastic gradient descent under infinite noise
  variance.
\newblock {\em Advances in Neural Information Processing Systems},
  34:18866--18877.

\bibitem[Zhang et~al., 2019]{zhang2019gradient}
Zhang, J., He, T., Sra, S., and Jadbabaie, A. (2019).
\newblock Why gradient clipping accelerates training: A theoretical
  justification for adaptivity.
\newblock {\em arXiv preprint arXiv:1905.11881}.

\end{thebibliography}

\appendix

\section{Proofs}

\subsection{Proof of Proposition   \ref{prop:Poisson22}}

\begin{proof}[Proof of Proposition \ref{prop:Poisson22}]

For all $\theta\in\R$ we have
\begin{equation}\label{eq:poisson}
\begin{split}
&F(\theta)=-\theta+\exp\big(\exp(\theta)-1\big)\\
&\nabla F(\theta)=-1+\exp\big(\exp(\theta)+\theta-1\big)\\
&\nabla^2 F(\theta)=\big(\exp(\theta)+1\big)\exp\big(\exp(\theta)+\theta-1\big).
\end{split}
\end{equation}

We now show that there exists no $\alpha\in(0,1]$ such that  Assumption 5 of \citet{patel2022global} holds. To this aim, for all  $\alpha\in(0,1]$ we let  
\begin{align}\label{eq:def_g}
G_\alpha(\theta)=\E[|\nabla f(\theta,Z)|^{1+\alpha}\big],\quad\forall \theta\in\R
\end{align}
and, for all $\epsilon>0$ and all $\theta\in\R$, we let $\mathcal{L}_{\alpha,\epsilon}(\theta)\in(0,\infty)$ be the smallest constant $L\in(0,\infty)$ such that
\begin{align*}
\big|\nabla F(\theta)-\nabla F(\theta')\big|\leq L\|\theta-\theta'\|^{1+\alpha},\quad\forall\theta'\in B_{r_{\alpha,\epsilon,\theta}}(\theta)
\end{align*}
where  $r_{\alpha,\epsilon,\theta}=(G_\alpha(\theta)\vee \epsilon)^{\frac{1}{1+\alpha}}$ and where    $B_r(\theta)$ denote the closed ball centred at $\theta\in\R$ and with radius $r>0$.

Then, to show that Assumption 5 of \citet{patel2022global} does not hold it is enough to show that 
\begin{align}\label{eq:ToShow_F}
\liminf_{\theta\rightarrow\infty}\frac{\mathcal{L}_{\alpha,\epsilon}(\theta)G_\alpha(\theta)}{F(\theta)+|\nabla F(\theta)|^2}=\infty,\quad\forall \epsilon>0,\quad\forall\alpha\in (0,1].
\end{align}

To show \eqref{eq:ToShow_F} let $\alpha\in (0,1]$ and $\epsilon>0$, and we first  compute a lower bound for  $G_\alpha(\theta)$ and for $\mathcal{L}_{\alpha,\epsilon}(\theta)$. Using H\"older inequality, for all $\theta\in\R$ we have
\begin{align*}
\E[|\nabla f(\theta,Z)|\big]&\leq \E[|\nabla f(\theta,Z)|^{1+\alpha}\big]^{\frac{1}{1+\alpha}},\quad
\end{align*}
where 
\begin{align*}
 \E[|\nabla f(\theta,Z)|\big] =\E\big[|X\exp(\theta X)-X Y|\big]&\geq \E[X\exp(\theta X)\big]-\E[X Y]\\
 &= \E[X\exp(\theta X)\big]-1\\
 &=\exp\big(\exp(\theta)+\theta-1\big)
\end{align*}
and thus
\begin{align}\label{eq:boundG}
G_\alpha(\theta)\geq \Big(\exp\big(\exp(\theta)+\theta-1\big)-1\Big)^{1+\alpha},\quad\forall \theta\geq 1.
\end{align}

To proceed further let $\theta\in\R$. To obtain a lower bound for $\mathcal{L}_{\alpha,\epsilon}(\theta)$ let $r>\log 2$ and, using the fact that
$$
\exp(\theta')\geq \exp(\theta)+(\theta'-\theta)\exp(\theta),\quad\forall \theta'\in\R,
$$
remark that
$$
\exp\big(\exp(\theta)-\exp(\theta')+\theta-\theta'\big)\leq \frac{1}{2},\quad\forall \theta' \in  (\theta+r,\infty). 
$$
Therefore, for all $\theta'  \in (\theta+r,\infty)$ we have,  using \eqref{eq:poisson},
\begin{align*}
\big|\nabla F(\theta)-\nabla F(\theta')\big|&=\Big|\exp\big(\exp(\theta)+\theta-1\big)-\exp\big(\exp(\theta')+\theta'-1\big)\Big|\\
&= \exp\big(\exp(\theta')+\theta'-1\big)\Big(1-\exp\big(\exp(\theta)-\exp(\theta')+\theta-\theta'\big)\Big)\\
&\geq \frac{1}{2}\exp\big(\exp(\theta')+\theta'-1\big)\\
&= \frac{\exp\big(\exp(\theta')+\theta'-1\big)}{2|\theta'-\theta|^{\alpha}}|\theta'-\theta|^\alpha 
\end{align*}
and thus 
\begin{align}\label{eq:lower_D}
\big|\nabla F(\theta)-\nabla F(\theta')\big|\geq \frac{\exp\big(\exp(\theta+r)+\theta+r-1\big)}{2 (2r)^{\alpha}}|\theta'-\theta|^\alpha,\,\,\forall \theta'\in \Big(B_{2r}(\theta)\setminus B_{r}(\theta)\Big)\cap(\theta,\infty).
\end{align}
This shows that if $D_{r,\alpha}(\theta)\in(0,\infty)$ is such that
$$
\big|\nabla F(\theta)-\nabla F(\theta')\big|\leq D_{r,\alpha}(\theta)|\theta-\theta'|^\alpha,\quad\forall\theta'\in B_{2r}(\theta)
$$
then 
$$
D_{r,\alpha}(\theta)\geq K_{r,\alpha}(\theta):=\frac{\exp\big(\exp(\theta+r)+\theta+r-1\big)}{2 (2r)^{\alpha}}.
$$

To proceed further let $c_{\alpha,\epsilon}\in(1,\infty)$ be such that
$$
\exp\big(\exp(\theta)+\theta-1\big)-1\geq  R(\theta):=\frac{1}{2}\exp\big(\exp(\theta)+\theta-1\big)\geq \epsilon^{\frac{1}{1+\alpha}},\quad\forall \theta\geq c_{\alpha,\epsilon}
$$
and note that, by   \eqref{eq:boundG}, we have  $r_{\alpha,\epsilon,\theta}\geq R(\theta)$ for all $\theta\geq c_{\alpha,\epsilon}$.

We now let $c_{\alpha,\epsilon}'\in\R$ be such that $c'_{\alpha,\epsilon}>c_{\alpha,\epsilon}$ and such that $R(\theta)>2$ for all $\theta\geq c'_{\alpha,\epsilon}$. Then, $r_{\alpha,\epsilon,\theta}>2>\log 2$ for all $\theta\geq c_{\alpha,\epsilon}'$ and thus, using the above computations,
\begin{align}\label{eq:boundL}
\mathcal{L}_{\alpha,\epsilon}(\theta)\geq K_{\frac{1}{2}r_{\alpha,\epsilon,\theta},\alpha}(\theta),\quad\forall \theta\geq c_{\alpha,\epsilon}'.
\end{align}

 Therefore, using \eqref{eq:boundG} and \eqref{eq:boundL}, and noting that for all $\theta>0$ we have $K_{r',\alpha}(\theta)\geq K_{r,\alpha}(\theta)$ for all $r'\geq r\geq 1$, it follows that for all $\theta\geq c_{\alpha,\epsilon}'$ we have
\begin{align*}
\mathcal{L}_{\alpha,\epsilon}(\theta) G_\alpha(\theta)&\geq R(\theta)^{1+\alpha} K_{\frac{1}{2}r_{\alpha,\epsilon,\theta},\alpha}\\
&\geq  R(\theta)^{1+\alpha} K_{\frac{1}{2}R(\theta),\alpha}\\
&= R(\theta)^{1+\alpha}\frac{\exp\big(\exp(\theta+R(\theta)/2)+\theta+R(\theta)/2-1\big)}{2 R(\theta)^{\alpha}}\\
&=\frac{R(\theta)}{2} \exp\big(\exp(\theta+R(\theta)/2)+\theta+R(\theta)/2-1\big)
\end{align*}
and thus
\begin{align*}
\liminf_{\theta\rightarrow\infty}\frac{\mathcal{L}_{\alpha,\epsilon}(\theta)G_\alpha(\theta)}{F(\theta)+|\nabla F(\theta)|^2}=\infty.
\end{align*}
Since $\alpha\in (0,1]$ and $\epsilon>0$ are arbitrary, \eqref{eq:ToShow_F} follows and the proof of the proposition is complete.
\end{proof}

\subsection{Proofs of the results of Section \ref{sec:Intro}}

\subsubsection{Proof of Proposition \ref{prop:Poisson3}}

\begin{proof}[Proof of Proposition \ref{prop:Poisson3}]
From \eqref{eq:poisson}  we observe that $\nabla^2 F(\theta)>0$ for all $\theta\in\R$ and thus the function $F$ satisfies \ref{assumption1}. Remark also that, from \eqref{eq:poisson}, the unique minimizer of $F$ is $\theta_\star=0$.

The function $\theta\mapsto f(\theta,z)$ is twice continuously differentiable on $\R$ for all $z\in\R^2$ where, for all $\theta\in\R$ and $z\in\R^2$, 
\begin{equation}\label{eq:poisson2}
\begin{split}
&\nabla f(\theta,z)=-z_1 z_2+z_1\exp(\theta z_1),\quad \nabla^2 f(\theta,z)= z_1^2\exp(\theta z_1).
\end{split}
\end{equation}
Using \eqref{eq:poisson2} and the fact that $\theta_\star=0$, we obtain
\begin{align}\label{eq:A21}
\E[(\nabla f(\theta_\star,Z))^2\big]=\E\Big[\big(-X Y+X\exp(\theta_\star X)\big)^2\Big]=\E\Big[X^2(1-Y)^2\Big]=2
\end{align}
and
\begin{align}\label{eq:A22}
\E[|\nabla^2 f(\theta_\star,Z)|\big]=\E\big[X^2\exp(\theta_\star X) \big]=2.
\end{align}

We now let $\Theta\subset\R$ be a non-empty compact set, $C_\Theta\in(0,\infty)$ be such that $|\theta|\leq C_\Theta$ and such that $|\theta-\theta'|\leq C_\Theta$ for all $\theta,\theta'\in\Theta$, and we let $M_\Theta:\R^2\rightarrow\R$ be defined by
$$
M_\Theta(z)= |z_1^3|\exp(2C_\Theta |z_1|),\quad z\in\R^2.
$$
Then, using the fact that $|e^x-1|\leq |x|e^{|x|}$ for all $x\in\R$, for all $(\theta,\theta')\in\Theta^2$ and $z\in\R^2$ we have
\begin{align*}
\big|\nabla^2 f(\theta,z)-\nabla^2 f(\theta',z)\big|&=z_1^2\big|\exp(\theta z_1)-\exp(\theta' z_1)\big|\\
&=z_1^2\exp(\theta z_1)\big|1-\exp((\theta'-\theta) z_1)\big|\\
&\leq |z_1^3|\exp(\theta z_1) \big|\theta'-\theta\big|\exp(|\theta'-\theta| |z_1|)\\
&\leq  M_\Theta(z) \big|\theta'-\theta\big| 
\end{align*}
where, using Cauchy-Schwartz inequality,
\begin{align*}
\E\big[M_\Theta(Z)\big]=\E\Big[X^3\exp(2C_\Theta X)\Big]&\leq\Big(\E[X^6]\E\big[\exp(4C_\Theta X)\big]\Big)^{1/2}\\
&=\E[X^6]^{1/2}\exp\Big(\exp(4C_\Theta)-1\Big)^{1/2}\\
&<\infty.
\end{align*}
Together with \eqref{eq:A21}-\eqref{eq:A22} this shows that \ref{assumption2} holds and the proof of Proposition \ref{prop:Poisson3} is complete.

\end{proof}

\subsubsection{Proof of Lemma \ref{lemma:assume1}}

\begin{proof}[Proof of Lemma \ref{lemma:assume1}]

Let $\Theta\subset\R^d$ be a (non-empty) closed ball containing $\theta_\star$ and  let $(\theta,\theta')\in\Theta^2$. 

Then, under \ref{assumption2}, by Taylor's theorem  there exists  a $\Theta$-valued random variable   $\tilde{\theta}$ such that, $\P$-a.s., 
\begin{equation}\label{eq:lemma_as}
\begin{split}
\big\| \nabla f(\theta,Z)-\nabla f(\theta',Z)\big\|&=\big\|\nabla^2 f(\tilde{\theta},Z)(\theta-\theta')\big\|\\
&\leq \|\theta-\theta'\| \,\|\nabla^2 f(\tilde{\theta},Z) \|\\
&\leq \|\theta-\theta'\|\Big(\|\nabla^2 f(\tilde{\theta},Z)-\nabla^2 f(\theta_\star,Z) \|+\|\nabla^2 f(\theta_\star,Z) \|\Big)\\
&\leq \|\theta-\theta'\|\Big(\|\tilde{\theta}-\theta_\star\| M_\Theta(Z)+\|\nabla^2 f (\theta_\star,Z) \|\Big)\\
&\leq  \|\theta-\theta'\| M^{(1)}_\Theta(Z)
\end{split}
\end{equation}
where
$$
M^{(1)}_\Theta(z)=\sup_{\theta_1,\theta_2\in\Theta} \|\theta_1-\theta_2\| M_\Theta(z)+\|\nabla^2 f(\theta_\star,z)\|,\quad\forall z\in \mathsf{Z}.
$$
Under \ref{assumption2} we have both $\E[M_\Theta(Z)]<\infty$ and $\E[\|\nabla^2 f(\theta_\star,Z)\|]<\infty$, and thus $\E[M^{(1)}_\Theta(Z)]<\infty$.

Similarly, under \ref{assumption2}, by Taylor's theorem  there exists  a $\Theta$-valued random variable $\tilde{\theta}$  such that, $\P-a.s.$,
\begin{align*}
\big|f(\theta,Z)-f(\theta',Z)\big|&=\big|(\theta-\theta')^\top \nabla f(\tilde{\theta},Z)\big|\\
&\leq \|\theta-\theta'\| \,\|\nabla f(\tilde{\theta},Z) \|\\
&\leq \|\theta-\theta'\|\Big(\|\nabla f(\tilde{\theta},Z)-\nabla f(\theta_\star,Z) \|+\|\nabla f(\theta_\star,Z) \|\Big)\\
&\leq \|\theta-\theta'\|\Big(\|\tilde{\theta}-\theta_\star\| M^{(1)}_\Theta(Z)+\|\nabla f(\theta_\star,Z) \|\Big)\\
&\leq \|\theta-\theta'\|   M^{(2)}_\Theta(Z)
\end{align*}
where the third inequality uses   \eqref{eq:lemma_as} and where
$$
 M^{(2)}_\Theta(z)=\sup_{\theta_1,\theta_2\in\Theta} \|\theta_1-\theta_2\|  M^{(1)}_\Theta(z)+\|\nabla f(\theta_\star,z)\|,\quad\forall z\in \mathsf{Z}.
$$
Since, as shown above, $\E[ M^{(1)}_\Theta(Z)]<\infty$ while, by assumption,  $\E[\|\nabla f(\theta_\star,Z)\|]<\infty$, it follow that $\E[M^{(2)}_\Theta(Z)]<\infty$. The result of the lemma follows.
\end{proof}

\subsubsection{Proof of Lemma \ref{lemma:assume2}}

\begin{proof}[Proof of Lemma \ref{lemma:assume2}]
Let $\theta_\star$ be as in \ref{assumption2} and $\Theta$ be a (non-empty) closed ball containing $\theta_\star$. Next, let $\theta\in\Theta$, $M'_{\Theta}(\cdot)$ be as in Lemma \ref{lemma:assume1} and
$$
G(z)=\sup_{\theta_1,\theta_2\in\Theta}\|\theta_1-\theta_2\|  M_{\Theta}'(z)+\|\nabla  f(\theta_\star,z)\|+\|\nabla^2 f(\theta_\star,z)\|,\quad\forall z\in\mathsf{Z}.
$$

Then, for all $k\in\{1,2\}$ we  $\P$-a.s.~have,  by Lemma \ref{lemma:assume1},
\begin{align*}
\sup_{\theta\in\Theta}\|\nabla^k f(\theta,Z)\|&\leq \sup_{\theta\in\Theta} \| \nabla^k f(\theta,Z)- \nabla^k f(\theta_\star,Z)\|+ \|\nabla^k f(\theta_\star,Z)\|\\
&\leq \sup_{\theta_1,\theta_2\in\Theta}\|\theta_1-\theta_2\| M_{\Theta}'(Z)+\|\nabla^k f(\theta_\star,Z)\|\\
&\leq G(Z).
\end{align*}
Since by Lemma \ref{lemma:assume1} and under \ref{assumption2} we have $\E[G(Z)]<\infty$, it follows from the dominated convergence theorem that, for all $k\in\{1,2\}$, we have   $\nabla^k F(\theta)=\E\big[\nabla^k f(\theta,Z)\big]$   for all $\theta\in\ring{\Theta}$  and thus, since $\Theta$ is arbitrary, $\nabla^k F(\theta)=\E\big[\nabla^k f(\theta,Z)\big]$ for all $\theta\in\R^d$.
 
Since $F$ is twice differentiable on $\R^d$, the mapping $\nabla F:\R^d\rightarrow\R^d$ is continuous and thus to conclude the proof of the lemma it remains to show that the mapping  $\nabla^2 F:\R^d\rightarrow\R^{d\times d}$ is continuous as well. To this aim, let $\Theta$ be a (non-empty) closed ball containing $\theta_\star$ and let $\theta,\theta'\in\Theta$. Then,  
\begin{align*}
\big\|\nabla^2 F(\theta)-\nabla^2 F(\theta')\|&=\big\|\E\big[\nabla^2 f(\theta,Z)\big]-\E\big[\nabla^2 f(\theta',Z)\big] \big\|\\
&\leq \E \Big[\|\nabla^2 f(\theta,Z)-\nabla^2 f(\theta',Z)\|\Big]\\
&\leq \|\theta-\theta'\|\,  \E[M_\Theta(Z)]
\end{align*}
where the first inequality holds by Jensen's inequality and the second one holds under \ref{assumption2}. Since $\E[M_\Theta(Z)]<\infty$, this shows that the mapping  $\nabla^2 F:\R^d\rightarrow\R^{d\times d}$ is locally Lipschitz and thus continuous. The result of the lemma follows.
\end{proof}

\subsubsection{Proof of Lemma \ref{lemma:uniform_conv}}

\begin{proof}[Proof of Lemma \ref{lemma:uniform_conv}]

Let $k\in\{0,1,2\}$ and note that,  using Lemma \ref{lemma:assume1}, we have  $\E[\|\nabla^k f(\theta,Z)\|]<\infty$ for all $\theta\in\R^d$ under \ref{assumption2}. Therefore, by the law of large numbers, for all $\theta\in\R^d$ we have
$$
\lim_{n\rightarrow\infty}  \Big\|\nabla^k F_n(\theta)-\E\big[\nabla^k f(\theta, Z)\big]\Big\|=0,\quad \P-a.s.
$$
To proceed further let $\Theta\subset\R^d$ be a (non-empty) compact set. Then,  by Lemma \ref{lemma:assume1}, there exists a  measurable function $M'_\Theta(\cdot)$ such that $\E[M'_\Theta(Z)]<\infty$ and such that
$$
\big\|\nabla^k f(\theta, Z)-\nabla^k f(\theta',Z)\big\|\leq  M'_\Theta(Z)\|\theta-\theta'\|,\quad\forall \theta,\theta'\in\Theta,\quad \P-a.s.
$$
Therefore, $\P$-a.s., for all $\theta,\theta'\in\R^d$ and $n\geq 1$ we have
\begin{equation}\label{eq:eigen_lem1}
\begin{split}
   \|\nabla^k F_n(\theta)-\nabla^k F_n(\theta')\|&=\frac{1}{n}\Big\|\sum_{i=1}^n \Big(\nabla^k f(\theta, Z_i)-\nabla^k f(\theta',Z_i)\Big)\Big\|\\
    &\leq\frac{1}{n}\sum_{i=1}^n \|\nabla^k f(\theta, Z_i)-\nabla^k f(\theta', Z_i)\|\\
    &\leq \|\theta-\theta'\|\,  \frac{1}{n}\sum_{i=1}^n  M'_\Theta(Z_i).
\end{split}
\end{equation}
 By the law of large numbers, $\lim_{n\rightarrow\infty} \frac{1}{n}\sum_{i=1}^n  M'_\Theta(Z_i)=\E[ M'_\Theta(Z)]<\infty$, $\P$-.a.s, and thus
\begin{equation}\label{eq:eigen_lem2}
\frac{1}{n}\sum_{i=1}^n  M'_\Theta(Z)=\bigO(1),\quad\P-a.s.
\end{equation}

Using \eqref{eq:eigen_lem1}-\eqref{eq:eigen_lem2}, as well as the fact that $\Theta$ is compact and the fact that, by Lemma \ref{lemma:assume2}, the mapping $\nabla^kF$ is uniformly continuous on the compact set $\Theta$, it is readily checked that
$$
\lim_{n\rightarrow\infty}\sup_{\theta\in\Theta}\Big\|\nabla^k F_n(\theta)-\E\big[\nabla^k f(\theta, Z)\big]\Big\|=0.
$$
The proof of the lemma is complete.
\end{proof}

\subsubsection{Proof of Lemma \ref{lemma:e-value}}

\begin{proof}[Proof of Lemma \ref{lemma:e-value}]

To simplify the notation in what follows for all $\theta\in\R^d$ we let $H(\theta)=\E[\nabla^2 f(\theta,Z)]$ and $H_n(\theta)=\nabla^2 F_n(\theta)$ for all $n\geq 1$. 

Let $\Theta$ be a (non-empty) compact set, $\theta\in\Theta$ and $n\geq 1$. Note that the matrices $H_n(\theta)$ and $H(\theta)$ are symmetric and thus, by the min-max theorem,
\begin{align}\label{eq:val_H}
&\lambda_{\min}\big(H_n(\theta)\big)=\min\Big\{x^\top H_n(\theta) x:\,\|x\|=1\Big\},\notag\\
&\lambda_{\min}\big(H(\theta)\big)=\min\Big\{x^\top H(\theta) x:\,\|x\|=1\Big\}.
\end{align}
Since for all $x\in\R^d$ such that $\|x\|=1$ we have
$$
x^\top H_n(\theta) x=x^\top H(\theta) x+x^\top \big(H_n(\theta)-H(\theta))x\geq \lambda_{\min}\big(H(\theta)\big)-  \|H_n(\theta)-H(\theta)\big\|
$$
it follows from Lemma \ref{lemma:uniform_conv} that
\begin{align*}
\liminf_{n\rightarrow\infty}\inf_{\theta\in\Theta}\lambda_{\min}\big(H_n(\theta)\big)&\geq \inf_{\theta\in\Theta}\lambda_{\min}\big(H(\theta)\big)- \limsup_{n\rightarrow\infty}\sup_{\theta\in\Theta}\|H_n(\theta)-H(\theta)\big\|=\inf_{\theta\in\Theta}\lambda_{\min}\big(H(\theta)\big).
\end{align*}
By Lemma \ref{lemma:assume2}, the mapping $H:\R^d\rightarrow\R^{d\times d}$ is continuous and thus, using \eqref{eq:val_H} and the maximum theorem, it follows that the mapping $\theta\mapsto  \lambda_{\min}\big(H(\theta)\big)$ is continuous. Therefore, since $\Theta$ is compact, there exists a $\theta_\Theta\in\Theta$ such that
$$
\inf_{\theta\in\Theta}\lambda_{\min}\big(H(\theta)\big)=\lambda_{\min}\big(H(\theta_\Theta)\big) 
$$
which concludes the proof of the first part of the lemma.

Using the fact, by the min-max theorem,
\begin{align*}
&\lambda_{\max}\big(H_n(\theta)\big)=\max\Big\{x^\top H_n(\theta) x:\,\|x\|=1\Big\},\notag\\
&\lambda_{\max}\big(H(\theta)\big)=\max\Big\{x^\top H(\theta) x:\,\|x\|=1\Big\} 
\end{align*}
the second part of the lemma is proved in a similar way and its proof is therefore omitted to save space.
\end{proof}

\subsubsection{Proof of Lemma \ref{lemma:MLE}}

\begin{proof}[Proof of  Lemma \ref{lemma:MLE}]
 Let $\Theta$ be a   compact and convex    set such that $\theta_\star\in\mathring{\Theta}$  and remark  that, by Lemma \ref{lemma:assume2}, we have  $\E[\nabla f(\theta_\star,Z)]=0$. Hence, since $\E[\|\nabla f(\theta_\star,Z)\|^{1+\alpha}]<\infty$ by assumption, it follows from \citet[][Lemma 3]{bubeck2013bandits} that there exists a constant $C^*<\infty$ such that
\begin{align}\label{eq:bound_alpha}
\P\big( \|\nabla F_{n}(\theta_\star)\|\geq M\big)\leq  \frac{C^*}{n^\alpha M^{1+\alpha}},\quad\forall M>0,\quad\forall n\geq 1.
\end{align}

Remark also that, since $\sup_{\theta\in\Theta}\|F_n(\theta)-F(\theta)\|=\smallo_\P(1)$ by Lemma \ref{lemma:uniform_conv} while,  under \ref{assumption1},  there exists a constant $c_\Theta>0$ such that $F(\theta)-F(\theta_\star)\geq c_\Theta\|\theta-\theta_\star\|$ for all $\theta\in\Theta$, it follows from \citet[][Theorem 5.7, page 45]{VanderVaart2000} that
\begin{align}\label{eq:c_MLE}
\|\theta_\star-\hat{\theta}_{\Theta,n}\|=\smallo_\P(1).
\end{align}

We now let
$$
\lambda_{\Theta}=\frac{1}{2}\inf_{\theta\in \Theta}\lambda_{\min}\big(\E\big[\nabla^2 f(\theta,Z)\big]\big)
$$
and,  to simplify the notation in what follows, for  all $\theta\in\R^d$   and $n\geq 1$ we   let $H_n(\theta)=\nabla^2 F_n(\theta)$. Finally, for all  $\epsilon>0$ and $n\geq 1$  we let
\begin{align*}
\Omega_{\epsilon,n}=\Big\{\omega\in\Omega:\,\, \hat{\theta}_{\Theta,n}^\omega\in\mathring{\Theta},\,\  \big\|\nabla  F^\omega_{n}(\theta_\star)\big\|\leq n^{-\frac{\alpha}{1+\alpha}} (C^*/\epsilon)^{\frac{1}{1+\alpha}} ,\, \inf_{\theta\in\Theta}\lambda_{\min}\big(H^\omega_{n}(\theta)\big)\geq \lambda_\Theta\Big\}.
\end{align*}
Remark that, by \eqref{eq:bound_alpha}-\eqref{eq:c_MLE} and  Lemma \ref{lemma:e-value}, and since $\theta_\star\in\mathring{\Theta}$, we have $\liminf_{n\rightarrow\infty}\P(\Omega_{\epsilon,n})\geq 1-\epsilon$ for all $\epsilon\in(0,1)$.  Remark also that $\lambda_\Theta>0$ under \ref{assumption1} and by Lemma \ref{lemma:assume2}, and thus for all $n\geq 1$ and $\epsilon>0$ the function $F_n^\omega$ is strictly convex on $\Theta$ for all $\omega\in\Omega_{\epsilon,n}$.

Let $n\geq 1$, $\epsilon\in(0,1)$ and $\omega\in\Omega_{\epsilon,n}$, and note that $\nabla F^\omega_{n}(\hat{\theta}^\omega_{\Theta,{n}})=0$ since $F^\omega_{n}$ is strictly convex on $\Theta$ and $\hat{\theta}^\omega_{\Theta,{n}}\in\mathring{\Theta}$. Therefore, since $\theta_\star\in\mathring{\Theta}$, it follows from Taylor's theorem that there exists a $\check{\theta}^\omega_{n_B}\in\mathring{\Theta}$ such that  
\begin{align}\label{eq:FF}
\nabla F^\omega_{n}(\theta_\star)=H_{n}^\omega( \check{\theta}^\omega_{n})(\theta_\star-\hat{\theta}^\omega_{\Theta,{n}})\Leftrightarrow  \theta_\star-\hat{\theta}^\omega_{\Theta,{n}}=\big(H_{n}^\omega( \check{\theta}^\omega_{n})\big)^{-1}\nabla  F^\omega_{n}(\theta_\star).
\end{align}
Remark that the matrix $H_{n}^\omega( \check{\theta}^\omega_{n})$ is indeed invertible since all its eigenvalues are bounded below
by $\lambda_\Theta$, where $\lambda_\Theta>0$ under \ref{assumption1} and by Lemma \ref{lemma:assume2}.

Using \eqref{eq:FF}, we obtain that
\begin{align*}
\|\theta_\star-\hat{\theta}^\omega_{\Theta,n}\|&\leq \|H_{n}^\omega( \check{\theta}^\omega_{n})\|^{-1}\|\nabla  F^\omega_{n}(\theta_\star)\|\leq \Big(\frac{C*}{\epsilon}\Big)^{\frac{1}{1+\alpha}} \lambda^{-1}_\Theta n^{-\frac{\alpha}{1+\alpha}}
\end{align*}
and the result of the lemma follows.

\end{proof}

\subsubsection{Proof of Proposition \ref{prop:rate}}
\begin{proof}[Proof of Proposition \ref{prop:rate}]

We  consider the optimization problem \eqref{eq:optim_prob} with $d=1$, with $\setZ=\R$,  with $f:\R\times\setZ\rightarrow\R$ defined by $f(\theta,z)=\theta^2-2\theta z$, $(\theta,z)\in\R^2$ and with $Z\sim P$ for some probability distribution $P$. Then, it is direct to check that  for any distribution $P$ such that $\E[|Z|^{1+\alpha'}]<\infty$ Assumptions \ref{assumption1}-\ref{assumption2} hold with $\alpha=\alpha'$, and that we have $\theta_\star=\E[Z]$. For all $B\in\mathbb{N}$ let $\hat{\theta}_B$ be an    estimator  of $\theta_\star$ based on $k_B\leq B$  distinct elements of $(Z_i)_{i\geq 1}$. Then, from the proof of \citet[][Theorem 3.1]{Devroye}, it is direct to see that there  exists a distribution $P$ of $Z$ such that $\E[|Z|^{1+\alpha'}]<\infty$ and such that, for some constants $\delta\in(0,1)$ and $C\in(0,\infty)$, we have 
\begin{align*}
\limsup_{B\rightarrow\infty}\P\Big(|\hat{\theta}_{B}-\E[Z]|> C k_B^{-\frac{\alpha}{1+\alpha}}\big)\geq \delta.
\end{align*}
The result of the proposition follows upon noting that $k_B\leq B$ for all $B\in\mathbb{N}$.
\end{proof}

\subsubsection{Proof of Proposition \ref{prop:GD_result_0}}

\begin{proof}[Proof of Proposition \ref{prop:GD_result_0}]

Since $g$ is convex on the convex set $K$ and $x_t\in K$ for all $t\geq 0$, it follows that
 \begin{align*}
 g(x_\star)\geq g(x_t)+\nabla g(x_t)^\top(x_\star-x_t),\quad\forall t\geq 0
 \end{align*}
 and thus
 \begin{align}\label{eq:gd2}
 g(x_t)\leq g(x_\star)-\nabla g(x_t)^\top(x_\star-x_t)=g(x_\star)+\nabla g(x_t)^\top(x_t-x_\star),\quad\forall t\geq 0.
 \end{align}
Remark that
\begin{align}\label{eq:gd3}
\|x_{t+1}-x_\star\|^2=\|x_t-x_\star\|^2+v_t^2\|\nabla g(x_t)\|^2-2 v_t \nabla g(x_t)^\top (x_t-x_\star),\quad\forall t\geq 0 
\end{align}
and let $t\geq 0$. Then, 
\begin{equation}\label{eq:gd4}
\begin{split}
g(x_{t+1})-g(x_\star)&\leq g(x_t)-g(x_\star)-\frac{v_t}{2}\|g(x_t)\|^2\\
&\leq \nabla g(x_t)^\top(x_t-x_\star)-\frac{v_t}{2}\|g(x_t)\|^2\\
&=  \frac{1}{2v_t}\Big(2v_t\nabla g(x_t)^\top(x_t-x_\star)-v_t^2\|g(x_t)\|^2\Big)\\
&=  \frac{1}{2v_t}\Big(2v_t\nabla g(x_t)^\top(x_t-x_\star)-v_t^2\|g(x_t)\|^2-\|x_t-x_\star\|^2+ \|x_t-x_\star\|^2\Big)\\
&=\frac{1}{2v_t}\big(\|x_t-x_\star\|^2-\|x_{t+1}-x_\star\|^2\big)
\end{split}
\end{equation}
where the first inequality uses \eqref{eq:vt}, the second inequality uses \eqref{eq:gd2} and the third equality uses \eqref{eq:gd3}. 

In addition, since $g(x_{t+1})-g(x_\star)\geq 0$ for all $t\geq 0$, it follows from \eqref{eq:gd4} that 
\begin{equation}\label{eq:gd5}
\|x_t-x_\star\|^2-\|x_{t+1}-x_\star\|^2\geq 0,\quad\forall t\geq 1
\end{equation}
 and thus, since for some constant $c>0$ we have $\inf_{t\geq 1}v_t\geq c>0$ by assumption, it follows from \eqref{eq:gd4}-\eqref{eq:gd5} that
$$
g(x_{t+1})-g(x_\star)\leq  \frac{1}{2c}\Big(\|x_t-x_\star\|^2-\|x_{t+1}-x_\star\|^2\Big),\quad\forall t\geq 0.
$$

To conclude the proof let $T\in\mathbb{N}$. Then, using this latter result, we have
\begin{align*}
\sum_{t=1}^{T}\big(g(x_{t})-g(x_\star)\big)&\leq \frac{1}{2c}\sum_{t=1}^{T}\Big(\|x_{t-1}-x_\star\|^2-\|x_{t}-x_\star\|^2\Big)\\
&=\frac{1}{2c}\big(\|x_0-x_\star\|^2-\|x_T-x_\star\|^2\big)\\
&\leq \frac{\|x_0-x_\star\|^2}{2c}
\end{align*}
and thus, since by \eqref{eq:vt} we have $g(x_{t})-g(x_\star)\leq g(x_{t-1})-g(x_\star)$ for all $t\geq 1$, it follows that
\begin{align*}
g(x_T)-g(x_\star)\leq \frac{1}{T}\sum_{t=1}^{T}\big(g(x_{t})-g(x_\star)\big)\leq \frac{\|x_0-x_\star\|^2}{2cT}
\end{align*}
and the proof of the proposition is complete.
\end{proof}

\subsubsection{Proof of Proposition \ref{prop:GD_result}}

\begin{proof}[Proof of Proposition \ref{prop:GD_result}]

Let $K$ and $\tilde{K}$ be as in the statement of the proposition, and remark that since $g$ is convex and $L_K$-smooth on the convex set $K$  we have, by the descent lemma,
\begin{align}\label{eq:prop4_1}
g(y)\leq g(x)+\nabla g(x)^\top (y-x)+\frac{L_K}{2}\|y-x\|^2,\quad\forall (x,y)\in K^2.
\end{align}
Remark also that $x-v\nabla g(x)\in K$ for all $x\in \tilde{K}$ and all $v\in (0,1]$, and let $x\in \tilde{K}\subset K$ and $v\in (0,1]$. Then,   since by \eqref{eq:prop4_1} we have
$$
g\big(x-v \nabla  g(x) \big)\leq g(x)-v \Big(1-\frac{L_K}{2}v\Big)\|\nabla g(x)\|^2 
$$
where
$$
v \Big(1-\frac{L_K}{2}v\Big)\geq \frac{v}{2}\Leftrightarrow v\leq \frac{1}{L_K},
$$
it follows that if $v=\beta^{k_x}$ with 
$$
k_x=\min\Big\{k\in\mathbb{N}_0: g\big(x-\beta^k\|\nabla g(x)\|\big)\leq g(x)-\frac{\beta^k}{2}\|\nabla g(x)\|^2\Big\}
$$
then $v \geq \beta/L_K$. Since  $x_t\in \tilde{K}$ for all $t\geq 0$, this shows that $v_t\geq \beta/L_K$ for all $t\geq 1$  and the proof of the proposition is complete.
\end{proof}

\subsection{Preliminary results for proving  Theorems \ref{thm:rate_GD}-\ref{thm:rate_GD_main}}

\subsubsection{A  technical lemma}

\begin{lemma}\label{lemma:gamma}
Let  $\alpha'\in(0,1]$, $\delta\in \big(0, 2\alpha'/(1+3\alpha')\big)$ and, for all $j\in\mathbb{N}$, let $\gamma_j= 1-\delta^j $. Next, let $w_{\alpha'}=(1+\alpha')/(1+3\alpha')$ and
$$
\gamma'_j=w_{\alpha'}+(1-w_{\alpha'})\gamma_{j-1},\quad\forall j\in\mathbb{N}\setminus\{1\}.
$$
Then, there exists a constant $\pi\in (0,1)$ such that
$$
\gamma_j=\pi\gamma_j'+(1-\pi)\gamma_{j-1},\quad\forall j\in\mathbb{N}\setminus\{1\}.
$$
In addition, if for some constants   $(\kappa,\tau)\in(0,\infty)^2$ and   all $j\in\mathbb{N}$ and $B\in\mathbb{N}$ we let $n_{j,B}=\lceil  \kappa B^{\gamma_j}\rceil$, $\tau_{j,B}= \tau  B^{- \frac{\alpha'}{1+\alpha'} \gamma_j} $ and $r_{j,B}=B^{- \frac{\alpha'}{1+\alpha'}  \gamma_j}$, then
$$
\lim_{B\rightarrow\infty}\frac{n_{j,B} r_{j-1,B}^2}{\tau_{j,B}^2 B}=\lim_{B\rightarrow\infty}\frac{n_{j,B}}{B}=0,\quad\forall j\in\mathbb{N}
$$
with  the convention  that $r_{j-1,B}=1$ for all $B\in\mathbb{N}$ when $j=1$.

\end{lemma}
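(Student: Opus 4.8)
The plan is to handle the two assertions separately, since the first is a purely affine identity in the sequence $(\gamma_j)_{j\ge 1}$ while the second is a comparison of exponents of $B$. For the existence of $\pi$ (claimed only for $j\ge 2$) I would not guess the value but solve for it: subtracting $\gamma_{j-1}$ from both sides of $\gamma_j=\pi\gamma'_j+(1-\pi)\gamma_{j-1}$ rewrites the identity as $\gamma_j-\gamma_{j-1}=\pi(\gamma'_j-\gamma_{j-1})$, so $\pi$ is forced to equal the ratio $(\gamma_j-\gamma_{j-1})/(\gamma'_j-\gamma_{j-1})$. Both differences are elementary: from $\gamma_j=1-\delta^j$ one gets $\gamma_j-\gamma_{j-1}=\delta^{j-1}(1-\delta)$, and from the definition of $\gamma'_j$ one gets $\gamma'_j-\gamma_{j-1}=w_{\alpha'}(1-\gamma_{j-1})=w_{\alpha'}\delta^{j-1}$. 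The factors $\delta^{j-1}$ cancel, leaving the $j$-independent value $\pi=(1-\delta)/w_{\alpha'}$. I would then check $\pi\in(0,1)$: positivity is immediate from $\delta<1$, while $\pi<1$ is equivalent to $1-\delta<w_{\alpha'}$, that is to $\delta>1-w_{\alpha'}$. The decisive arithmetic fact is $1-w_{\alpha'}=2\alpha'/(1+3\alpha')$, so membership of $\pi$ in $(0,1)$ is governed entirely by the position of $\delta$ relative to the threshold $2\alpha'/(1+3\alpha')$.

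For the two limits I would first normalise the ceiling: from $\kappa B^{\gamma_j}\le n_{j,B}\le \kappa B^{\gamma_j}+1$, the additive $+1$ is asymptotically negligible because $\gamma_j>0$, so $n_{j,B}$ is of exact order $B^{\gamma_j}$ and every limit reduces to reading off an exponent of $B$. The limit $n_{j,B}/B\to 0$ is then immediate for every $\delta\in(0,1)$, since the exponent is $\gamma_j-1=-\delta^j<0$. For the first quantity I would substitute $r_{j-1,B}^2=B^{-2\frac{\alpha'}{1+\alpha'}\gamma_{j-1}}$ and $\tau_{j,B}^2=\tau^2 B^{-2\frac{\alpha'}{1+\alpha'}\gamma_j}$ and collect powers of $B$; the exponent of $B$ is then $e_j=(\gamma_j-1)+\frac{2\alpha'}{1+\alpha'}(\gamma_j-\gamma_{j-1})$. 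Inserting the differences computed above gives the compact form $e_j=\delta^{j-1}\big(\tfrac{2\alpha'}{1+\alpha'}(1-\delta)-\delta\big)$, whose sign is that of the bracket; the bracket is negative exactly when $\frac{2\alpha'}{1+\alpha'}(1-\delta)<\delta$, which rearranges once more to $\delta>2\alpha'/(1+3\alpha')$. Taking the convention $\gamma_0=0$ makes $r_{0,B}=1$ consistent with the general formula, so this single expression for $e_j$ also covers the case $j=1$.

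The main obstacle is not any isolated computation but the bookkeeping that makes both parts of the lemma collapse onto a single scalar inequality: I would have to confirm that $\pi<1$ and that each exponent $e_j$ is negative hold under the same condition, namely $\delta>1-w_{\alpha'}=2\alpha'/(1+3\alpha')$, which is exactly the quantity $2\alpha'/(1+3\alpha')$ entering the hypothesis and the admissible range of $\delta$ in Theorem~\ref{thm:rate_GD_main}. A secondary point needing care is precisely the passage from the ceiling $\lceil\kappa B^{\gamma_j}\rceil$ to the clean power $B^{\gamma_j}$; I would justify it by the two-sided bound above, which shows the $+1$ changes neither the existence and value of $\pi$ nor the two limits.
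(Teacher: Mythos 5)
Your proof is correct and computes exactly what the paper computes: the same constant $\pi=(1-\delta)\tfrac{1+3\alpha'}{1+\alpha'}=(1-\delta)/w_{\alpha'}$, and the same reduction of both limits to the sign of an exponent of $B$. Your execution is slightly more direct -- you solve for $\pi$ from the difference form $\gamma_j-\gamma_{j-1}=\pi(\gamma'_j-\gamma_{j-1})$, with $\gamma_j-\gamma_{j-1}=\delta^{j-1}(1-\delta)$ and $\gamma'_j-\gamma_{j-1}=w_{\alpha'}\delta^{j-1}$, where the paper instead posits this $\pi$ and verifies the identity by unrolling the recursion $\gamma_j=\pi w_{\alpha'}+(1-\pi w_{\alpha'})\gamma_{j-1}$; and your single exponent formula $e_j=\delta^{j-1}\big(\tfrac{2\alpha'}{1+\alpha'}(1-\delta)-\delta\big)$, valid for all $j\geq 1$ under the convention $\gamma_0=0$, replaces the paper's case split between $j=1$ and $j\geq 2$, the latter being handled there via the identity $(1+2c_{\alpha'})\gamma'_j-2c_{\alpha'}\gamma_{j-1}-1=0$ (with $c_{\alpha'}=\alpha'/(1+\alpha')$) together with $\gamma_{j-1}<\gamma'_j$ and $\pi<1$.

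The one substantive issue is the hypothesis on $\delta$, and here your bookkeeping is a diagnosis rather than a defect. As you establish, $\pi\in(0,1)$ and $e_j<0$ hold precisely when $\delta>1-w_{\alpha'}=2\alpha'/(1+3\alpha')$, whereas the lemma as printed assumes $\delta\in\big(0,\,2\alpha'/(1+3\alpha')\big)$. Under the printed hypothesis the conclusions are in fact false: one gets $\pi=(1-\delta)/w_{\alpha'}>1$, and $e_1>0$, so the $j=1$ limit diverges. The printed range is a typo for $\delta\in\big(2\alpha'/(1+3\alpha'),\,1\big)$: that is the range imposed in Theorem \ref{thm:rate_GD_main}, it is the range in force whenever the lemma is invoked (to verify the assumptions of Lemma \ref{lemma:budget} in the proofs of both parts of that theorem), and the paper's own proof of the lemma tacitly assumes it when asserting that ``the conditions on $\delta$ ensure that $\pi\in(0,1)$''. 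So your argument proves the statement the paper actually needs; just state the corrected range for $\delta$ explicitly rather than citing the printed one.
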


\begin{proof}[Proof of Lemma \ref{lemma:gamma}]

We show that the first part of the lemma holds for
$$
\pi=(1-\delta)\frac{1+3\alpha'}{1+\alpha'}.
$$
Remark that the conditions on $\delta$ ensure that $\pi\in (0,1)$ and that, by definition, $\pi$ is such that
\begin{align}\label{eq:pi1}
\frac{2\alpha'+(1-\pi)(1+\alpha')}{1+3\alpha'}=\delta.
\end{align}
Then, the first part of the lemma holds   if and only if for all $j\geq 2$ we have
\begin{equation}\label{eq:gg}
\begin{split}
\gamma_j&=\pi w_{\alpha'}+\pi (1-w_{\alpha'})\gamma_{j-1}+(1-\pi)\gamma_{j-1}\\
&=\pi w_{\alpha'}+(1 -\pi w_{\alpha'} )\gamma_{j-1}\\
&=\pi w_{\alpha'}\sum_{i=0}^{j-2}(1-\pi w_{\alpha'})^i+(1-\pi w_{\alpha'})^{j-1}\gamma_1\\
&=1-(1-\pi w_{\alpha'})^{j-1}\big(1-\gamma_1)\\
&=1-\Big(\frac{2\alpha'+(1-\pi)(1+\alpha')}{1+3\alpha'}\Big)^{j-1}(1-\gamma_1)\\
&=1-\delta^{j-1}(1-\gamma_1)\\
&=1-\delta^j
\end{split}
\end{equation}
where the the penultimate equality uses \eqref{eq:pi1} and the last one the fact that $\gamma_1=1-\delta$. This shows the first part of the lemma.

To show the second part of the lemma note that, by the first part of the lemma, we have
\begin{align}\label{eq:in_gamma}
\gamma_j<\gamma_{j+1}<\gamma_{j+1}',\quad\forall j\in\mathbb{N}
\end{align}
and to simplify the notation in the following let $c_{\alpha'}=\alpha'/(1+\alpha')$.

Firstly, the conditions on $\delta$ ensuring  that
$\gamma_1(1+2c_{\alpha'})-1<0$, it follows that
\begin{align}\label{eq:tb1}
&\lim_{B\rightarrow\infty}\frac{ n_{1,B}  }{\tau_{1,B}^2 B}=0.
\end{align}

Next, using the first part of the lemma, we remark that for all $j\in\mathbb{N}\setminus\{1\}$ we have
\begin{align*}
(1+2c_{\alpha'})\gamma_{j}-2c_{\alpha'}\gamma_{j-1}-1&=(1+2c_{\alpha'})\gamma'_{j}-2c_{\alpha'}\gamma_{j-1}-1+(1+2c_{\alpha'})(1-\pi)(\gamma_{j-1}-\gamma_{j}')\\
&=(1+2c_{\alpha'})(1-\pi)(\gamma_{j-1}-\gamma_{j}')\\
&<0
\end{align*}
where the last inequality uses \eqref{eq:in_gamma}, and thus
\begin{align}\label{eq:tb2}
\lim_{B\rightarrow\infty}\frac{ n_{j+1,B} r_{j,B}^2}{\tau^2_{j+1,B}B}=0,\quad\forall j\in\mathbb{N}.
\end{align}
By combining \eqref{eq:tb1} and \eqref{eq:tb2} we obtain that
\begin{align}\label{eq:tb22}
\lim_{B\rightarrow\infty}\frac{ n_{j,B} r_{j-1,B}^2}{\tau^2_{j,B}B}=0,\quad\forall j\in\mathbb{N}.
\end{align}

Finally, since $\gamma_j<1$ for all $j\in\mathbb{N}$, we have 
\begin{align}\label{eq:tb4}
\lim_{B\rightarrow\infty}\frac{ n_{j,B}}{B}=0, \quad\forall j\in\mathbb{N} 
\end{align}
and the   second part of the lemma follows from \eqref{eq:tb22}-\eqref{eq:tb4}. The proof is complete.

\end{proof}

\subsubsection{A first key property of Algorithm \ref{algo:GD1}}

\begin{lemma}\label{lemma:stay_C}
Assume that  \ref{assumption1}-\ref{assumption2} hold. Let $x\in\R^d$, $\beta\in(0,1)$, $(n_B)_{B\geq 1}$ be a sequence in $\mathbb{N}$,   $(\tau_B)_{B\geq 1}$ be a sequence  in $[0,\infty)$ and $(\tilde{B}_{B})_{B\geq 1}$ be a sequence of $\mathbb{N}_0$-valued random variables. In addition, for all $B\in\mathbb{N}$ and $\theta_0\in\R^d$,   let 
$$
(\hat{\theta}_{\theta_0,B}, \tilde{B}_{\theta_0})=\mathrm{Algorithm\, \ref{algo:GD1} }\big(\theta_0,n_B, \tilde{B}_{ B}, \tau_B,   \beta\big) 
$$
and denote by $\{\theta_{\theta_0,B,t}\}_{t=0}^{T_{\theta_0,B}}$   the sequence in $\R^d$  generated by   Algorithm \ref{algo:GD1} to compute $\hat{\theta}_{\theta_0,B}$ from $\theta_0$ and (which is therefore such that $\theta_{\theta_0,B,0}=\theta_0$ and such that $\theta_{\theta_0,B,T_{\theta_0,B}}=\hat{\theta}_{\theta_0,B}$). Finally, for all  $j\in\mathbb{N}_0$, let $\Theta_{j,x}=\big\{\theta\in\R^d:\,F(\theta)\leq F(x)+j\big\}$. Then, for all   $j\in\mathbb{N}_0$, the set $\Theta_{j+1,x}$ is a (non-empty) compact and convex set such that $\theta_\star\in\mathring{\Theta}_{j+1,x}$, and there exists a sequence $(\Omega^{(1)}_{j,B})_{B\geq 1 }$ in $\F$ such that $\lim_{B\rightarrow\infty}\P(\Omega^{(1)}_{j,B})=1$  and such that  
$$
\theta^\omega_{\theta_0,B,t}\in\Theta_{j+1,x},\quad \forall t\in\{0,\dots,T_{\theta_0,B}^\omega\},\quad \forall \omega\in \Omega^{(1)}_{j,B},\quad\forall \theta_0\in\Theta_{j,x},\quad\forall B\in\mathbb{N}.
$$
\end{lemma}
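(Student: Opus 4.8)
The plan is to dispatch the two assertions in turn: the elementary geometric facts about the level sets first, and then the probabilistic confinement of the iterates.

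For the geometric claim, convexity of $\Theta_{j+1,x}$ is immediate since it is a sublevel set of the convex function $F$ (Assumption~\ref{assumption1}), and $\theta_\star\in\mathring{\Theta}_{j+1,x}$ because $F(\theta_\star)=\min_{\R^d}F\le F(x)<F(x)+(j+1)$, so by continuity of $F$ (Lemma~\ref{lemma:assume2}) the strict inequality places $\theta_\star$ in the interior; this also gives non-emptiness. The only point requiring work is compactness, for which I would show $F$ is coercive. Fixing the closed unit ball $B_1(\theta_\star)$, Assumption~\ref{assumption1} together with $\nabla F(\theta_\star)=0$ yields a constant $c>0$ with $\nabla F(\theta_\star+u)^\top u\ge c$ for every unit vector $u$ (monotonicity of $\nabla F$ from strong convexity on that ball). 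Since $s\mapsto \nabla F(\theta_\star+su)^\top u$ is nondecreasing along each ray by convexity, integration gives $F(\theta_\star+su)\ge F(\theta_\star+u)+c(s-1)\to\infty$, uniformly in $u$ because $c$ is common to the whole unit sphere. Hence every sublevel set is bounded, and being closed it is compact.

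For the confinement claim I would fix the slightly larger compact convex set $\Theta_{j+2,x}$ as a buffer and define $\Omega^{(1)}_{j,B}$ as the intersection of three events on this set: $\sup_{\theta\in\Theta_{j+2,x}}|F_{n_B}(\theta)-F(\theta)|\le \tfrac14$ (Lemma~\ref{lemma:uniform_conv}), the analogous uniform bound for $\nabla F_{n_B}$, and strict convexity of $F_{n_B}$ on $\Theta_{j+2,x}$ (Corollary~\ref{cor:convex}); each has probability tending to one, hence so does their intersection. The driving mechanism is the descent property of Algorithm~\ref{algo:GD1}: the inner loop updates $\theta_{B,t}$ only when the sufficient-decrease test $F_{n_B}(\theta_{B,t}-v_{t+1}G_t)\le F_{n_B}(\theta_{B,t})-\tfrac{v_{t+1}}{2}\|G_t\|^2$ holds, so $F_{n_B}$ is nonincreasing along the whole trajectory and $F_{n_B}(\theta_{\theta_0,B,t})\le F_{n_B}(\theta_0)$ for every $t$. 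On $\Omega^{(1)}_{j,B}$, since $\theta_0\in\Theta_{j,x}\subseteq\Theta_{j+2,x}$ we obtain $F_{n_B}(\theta_0)\le F(\theta_0)+\tfrac14\le F(x)+j+\tfrac14$. The payoff is that the whole statement reduces to a single fact, \emph{the iterates never leave $\Theta_{j+2,x}$}: indeed, whenever $\theta_{\theta_0,B,t}\in\Theta_{j+2,x}$ the uniform bound gives $F(\theta_{\theta_0,B,t})\le F_{n_B}(\theta_{\theta_0,B,t})+\tfrac14\le F_{n_B}(\theta_0)+\tfrac14\le F(x)+j+\tfrac12<F(x)+j+1$, i.e. $\theta_{\theta_0,B,t}\in\Theta_{j+1,x}$, which is exactly the desired conclusion.

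It remains to keep the iterates inside the buffer, and this is where I expect the main obstacle to lie. My plan is a first-exit argument on a single step: from $\theta_{B,t}\in\Theta_{j+2,x}$ the update moves along $-G_t=-\nabla F_{n_B}(\theta_{B,t})$, and on the portion of this segment lying in the convex set $\Theta_{j+2,x}$ the map $s\mapsto F_{n_B}(\theta_{B,t}-sG_t)$ is convex; moreover the uniform bound forces $F_{n_B}\ge F(x)+j+\tfrac74>F_{n_B}(\theta_0)$ on $\partial\Theta_{j+2,x}$, so any crossing of that boundary is a strict ascent, incompatible with the convex behaviour on the buffer and with the descent value $F_{n_B}(\theta_{B,t+1})\le F_{n_B}(\theta_{B,t})$ \emph{provided the endpoint itself still lies in $\Theta_{j+2,x}$}. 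The genuine difficulty is precisely to exclude that the endpoint overshoots the buffer: because $f(\cdot,Z)$ is not assumed convex, $F_{n_B}$ is controlled only on the compact set $\Theta_{j+2,x}$, and nothing a priori prevents $F_{n_B}$ from dipping below $F_{n_B}(\theta_0)$ far from $\theta_\star$, so a backtracking step with $v_{t+1}\le 1$ could in principle jump across the high-value boundary and land in such an uncontrolled region where the sufficient-decrease test is satisfied. The intended resolution is to fold into $\Omega^{(1)}_{j,B}$ an additional high-probability event ruling out this overshoot, exploiting that by convexity on $\Theta_{j+2,x}$ the direction $-G_t$ points toward the interior minimiser $\hat\theta$ of $F_{n_B}$ over $\Theta_{j+2,x}$ (where $\nabla F_{n_B}(\hat\theta)=0$), together with the strict ascent across $\partial\Theta_{j+2,x}$ and a control of $\{F_{n_B}\le F_{n_B}(\theta_0)\}$ via the strong-convexity estimate of Corollary~\ref{cor:convex} on a large ball, so that the accepted step must occur before the first exit. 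With confinement to the buffer established on an event of probability tending to one, the transfer of the previous paragraph yields $\theta_{\theta_0,B,t}\in\Theta_{j+1,x}$ for all $t$, all $\theta_0\in\Theta_{j,x}$ and all $B$, completing the proof.
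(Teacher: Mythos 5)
Your geometric argument (convexity, compactness via coercivity, $\theta_\star$ in the interior) is correct, and your reduction of the lemma to a single confinement claim on a buffer set is also the right skeleton; the paper's proof has the same structure. The genuine gap is exactly the one you flagged and did not close: keeping the iterates inside the buffer. Your buffer is the sublevel set $\Theta_{j+2,x}$, and under the paper's assumptions nothing bounds the one-step displacement $v_{t+1}\|G_t\|$ relative to the width of the gap between $\Theta_{j+1,x}$ and the complement of $\Theta_{j+2,x}$: the gradient of $F$ (hence, on your event, of $F_{n_B}$) near $\partial\Theta_{j+1,x}$ can be arbitrarily large compared with the distance between consecutive level sets (in the Poisson example $F$ grows doubly exponentially, so the level sets $\{F=c\}$ and $\{F=c+1\}$ are extremely close while $\|\nabla F\|$ blows up). Hence even the very first accepted step can land outside $\Theta_{j+2,x}$, in a region where $F_{n_B}$ is not uniformly close to $F$; since $f(\cdot,Z)$ is not convex, $F_{n_B}$ may dip below $F_{n_B}(\theta_0)$ out there, and the sufficient-decrease test can accept such a point. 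Your proposed repair cannot work as stated: the test only inspects the endpoint of the step, not the path, so "strict ascent across $\partial\Theta_{j+2,x}$" is irrelevant, and no event of probability tending to one can control the sublevel sets of $F_{n_B}$ outside a fixed compact set, because Lemma \ref{lemma:uniform_conv} gives uniform convergence only on compacts.

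The fix --- and this is the paper's key idea --- is to choose the buffer adapted to the step length rather than as another sublevel set. Since backtracking always uses $v_{t+1}\le 1$, a step started at $\theta_{B,t}\in\Theta_{j+1,x}$ has length at most $\|\nabla F_{n_B}(\theta_{B,t})\|$, which on the event $\Omega^{(1)}_{j,B}=\big\{\max_{k\in\{0,1\}}\sup_{\theta\in\widetilde{\Theta}_{j,x}}\|\nabla^k F_{n_B}(\theta)-\nabla^k F(\theta)\|\le 1/2\big\}$ is at most $\sup_{\theta\in\Theta_{j+1,x}}\|\nabla F(\theta)\|+1/2$. Taking the buffer to be the metric enlargement $\widetilde{\Theta}_{j,x}=\big\{\theta:\ \exists\,\theta'\in\Theta_{j+1,x},\ \|\theta-\theta'\|\le \sup_{\tilde{\theta}\in\Theta_{j+1,x}}\|\nabla F(\tilde{\theta})\|+1\big\}$ (compact and convex because $\Theta_{j+1,x}$ is), every accepted step from $\Theta_{j+1,x}$ lands in $\widetilde{\Theta}_{j,x}$ by construction; there the uniform bound applies, and the descent inequality $F_{n_B}(\theta_{B,t+1})\le F_{n_B}(\theta_0)$ yields $F(\theta_{B,t+1})\le F(x)+j+1$, pulling the iterate back into $\Theta_{j+1,x}$, after which induction on $t$ finishes the proof. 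With this one change of buffer, the rest of your argument goes through essentially verbatim.
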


\begin{proof}[Proof of Lemma \ref{lemma:stay_C}]

Let  $j\in\mathbb{N}_0$,
\begin{align}\label{eq:Theta_tilde}
\widetilde{\Theta}_{j,x}=\Big\{\theta\in\R^d:\,\exists \theta'\in\Theta_{j+1,x}\text{ such that }\|\theta-\theta'\|\leq \sup_{\tilde{\theta}\in\Theta_{j+1,x}}\|\nabla F(\tilde{\theta})\|+1\Big\}
\end{align}
 and, for all $B\in\mathbb{N}$, let
\begin{align*}
\Omega^{(1)}_{j,B}=\Big\{\omega\in\Omega:\max_{k\in\{0,1\}} \sup_{\theta\in\tilde{\Theta}_{j,x}}\|\nabla^k F^\omega_{n_B}(\theta)-\nabla^k F(\theta)\|\leq 1/2\Big\}.
\end{align*}

Under \ref{assumption1}  the function $F$ is strictly convex on $\R^d$ and thus the set $\Theta_{j+1,x}$ is convex and bounded. Therefore, since by Lemma \ref{lemma:assume2}  the function $F$ is also continuous on $\R^d$,  the set $\Theta_{j+1,x}$ is compact and such that $\theta_\star\in\mathring{\Theta}_{j+1,x}$. 

To show the second part of lemma remark that  since  $\Theta_{j+1,x}$  is a compact (and convex) set and $F$ is continuous by  Lemma \ref{lemma:assume2}, it follows  that $\sup_{\tilde{\theta}\in\Theta_{j+1,x}}\|\nabla F(\tilde{\theta})\|<\infty$ and thus       $\widetilde{\Theta}_{j,x}$ is also a compact  (and convex) set. Consequently,   by Lemma  \ref{lemma:uniform_conv}, we have $\lim_{B\rightarrow\infty}\P(\Omega^{(1)}_{j,B})=1$.

To proceed further we let $B\in\mathbb{N} $,  $\omega\in \Omega^{(1)}_{j,B}$ and $\theta_0\in\Theta_{j,x}$, and we show that  $\theta_{ \theta_0,B,t}^\omega\in\Theta_{j+1,x}$ for all $t\in\{0,\dots,T_{ \theta_0,B}^\omega\}$. We assume below that $T_{ \theta_0,B}^\omega>0$ since otherwise the result is trivial (since by  $\theta_0\in\Theta_{j,x}\subset\Theta_{j+1,x}$).

To do so let $\theta\in \widetilde{\Theta}_{j,x}$ be such that $F^\omega_{n_B}(\theta)\leq F^\omega_{n_B}(\theta_0)$ and note that
\begin{align*}
F(\theta)-1/2\leq F^\omega_{n_B}(\theta)\leq F^\omega_{n_B}(\theta_0)\leq F(\theta_0)+1/2\leq F(x)+j+1/2
\end{align*}
showing that $F(\theta)\leq F(x)+j+1$  and thus that
\begin{align}\label{eq:inclusion}
\{\theta\in  \widetilde{\Theta}_{j,x}:\,\, F^\omega_{n_B}(\theta)\leq F^\omega_{n_B}(\theta_0)\}\subset \Theta_{j+1,x}.
\end{align}
We now let $t\in \{0,\dots, T_{ \theta_0,B}^\omega-1\}$ and      remark that Algorithm \ref{algo:GD1} ensures that we have
$$
\|\theta_{ \theta_0,B,t+1}^\omega-\theta_{ \theta_0,B,t}^\omega\|\leq \|\nabla F^\omega_{n_B}(\theta_{ \theta_0,B,t}^\omega)\|.
$$
Hence, if $\theta_{ \theta_0,B,t}^\omega \in\Theta_{j+1,x}\subset\tilde{\Theta}_{j,x}$, we have
$$
\|\theta_{ \theta_0,B,t+1}^\omega-\theta_{ \theta_0,B,t}^\omega\|\leq \|\nabla F(\theta_{ \theta_0,B,t}^\omega)\|+1/2\leq \sup_{\tilde{\theta}\in\Theta_{j+1,x}}\|\nabla F(\tilde{\theta})\|+1
$$
showing that $\theta_{\theta_0,B,t+1}^\omega\in \widetilde{\Theta}_{j,x}$. Therefore, since Algorithm \ref{algo:GD1} ensures that we have
$$
F^\omega_{n_B}(\theta_{  \theta_0,B,t+1}^\omega)\leq F^\omega_{n_B}(\theta_{ \theta_0,B,t}^\omega)\leq  F^\omega_{n_B}(\theta_0) 
$$
it follows from \eqref{eq:inclusion} that  $\theta_{ \theta_0,B,t+1}^\omega\in\Theta_{j+1,x}$ if $\theta_{ \theta_0,B,t}^\omega\in\Theta_{j+1,x}$. Consequently, since $\theta_0\in\Theta_{j,x}\subset\Theta_{j+1,x}$, it follows that $\theta_{\theta_0,B,t}^\omega\in\Theta_{j+1,x} $ for all $t\in\{0,\dots,T_{ \theta_0,B}^\omega\}$ and the proof of the lemma is complete.
\end{proof}
 
\subsubsection{A second key property of Algorithm \ref{algo:GD1}}

\begin{lemma}\label{lemma:Error_F}

Consider the set-up of Lemma \ref{lemma:stay_C}. For all $\theta_0\in\R^d$, $B\in\mathbb{N}$ and $t\in\{1,\dots, T_{\theta_0,B}\}$, let $C_{\theta_0,B,t}$ denote the computational cost needed for computing $\theta^\omega_{\theta_0,B,t}$ from $\theta^\omega_{\theta_0,B,t-1}$, with the convention that $C^\omega_{\theta_0,B,t}=0$ when $T^\omega_{\theta_0,B}=0$.  Then, for all $j\in\mathbb{N}_0$,  there exist (i) a compact and convex set $K_{j+1,x}\subset\R^d$ such that $\Theta_{j+1,x}\subseteq K_{j+1,x}$, (ii) a constant  $D_{j}\in(0,\infty)$ and    (iii) a sequence $(\Omega^{(2)}_{j,B})_{B\geq 1}$ in $\F$  such that $\lim_{B\rightarrow\infty}\P(\Omega^{(2)}_{j,B})=1$, such that we have, for all $B\in\mathbb{N}$, all  $\theta_0\in\Theta_{j,x}$ and all $\omega\in\Omega_{j,B}^{(2)}$,   
$$
F^\omega_{n_B}(\hat{\theta}^\omega_{ \theta_0,B})-F^\omega_{n_B}(\hat{\theta}^\omega_{K_{j+1,x},n_B})\leq \frac{D_{j}\|\theta_0-\hat{\theta}_{K_{j+1,x},n_B}\|^2}{ T^\omega_{\theta_0,B}},\quad C^\omega_{\theta_0,B,t}\leq D_{j}\, n_B.
$$
\end{lemma}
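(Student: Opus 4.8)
The plan is to take for $K_{j+1,x}$ the compact convex set $\widetilde{\Theta}_{j,x}$ introduced in \eqref{eq:Theta_tilde} in the proof of Lemma \ref{lemma:stay_C}, which by construction contains $\Theta_{j+1,x}$, and to build $\Omega^{(2)}_{j,B}$ as the intersection of the event $\Omega^{(1)}_{j,B}$ of Lemma \ref{lemma:stay_C} with an event on which the eigenvalues of $\nabla^2 F_{n_B}$ are controlled on $K_{j+1,x}$. Applying Corollary \ref{cor:convex} to the compact convex set $K_{j+1,x}$ yields a constant $c_K\in(0,1)$ and, since $n_B\to\infty$ (as already used in Lemma \ref{lemma:stay_C}), the events
$$
\Omega^{(2)}_{j,B}=\Omega^{(1)}_{j,B}\cap\Big\{\inf_{\theta\in K_{j+1,x}}\lambda_{\min}\big(\nabla^2 F_{n_B}(\theta)\big)\geq \tfrac{c_K}{2},\ \sup_{\theta\in K_{j+1,x}}\lambda_{\max}\big(\nabla^2 F_{n_B}(\theta)\big)\leq \tfrac{2}{c_K}\Big\}
$$
satisfy $\lim_{B\to\infty}\P(\Omega^{(2)}_{j,B})=1$. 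On $\Omega^{(2)}_{j,B}$ the function $F_{n_B}$ is strongly (hence strictly) convex on $K_{j+1,x}$, its gradient is $L_j$-Lipschitz there with $L_j:=\max(1,2/c_K)$, and the minimiser $\hat{\theta}_{K_{j+1,x},n_B}$ exists since $F_{n_B}$ is continuous. I would fix $j$, $B$, $\omega\in\Omega^{(2)}_{j,B}$ and $\theta_0\in\Theta_{j,x}$ throughout and write $(\theta_t)_{t=0}^{T}$ for the iterates, which by Lemma \ref{lemma:stay_C} all lie in $\Theta_{j+1,x}\subseteq K_{j+1,x}$.

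The crux is a uniform lower bound on the backtracking step-sizes. Because $F_{n_B}$ is only locally convex and locally smooth, Proposition \ref{prop:GD_result} cannot be applied verbatim (its hypothesis that the enlarged set contains the full sublevel set of $F_{n_B}$ may fail), so I would re-run its argument inside $K_{j+1,x}$. For any iterate $\theta_{t-1}\in\Theta_{j+1,x}$ and any $v\in(0,1]$, the trial point $\theta_{t-1}-v\nabla F_{n_B}(\theta_{t-1})$ lies within distance $\|\nabla F_{n_B}(\theta_{t-1})\|\leq \sup_{\theta\in\Theta_{j+1,x}}\|\nabla F(\theta)\|+1/2$ of $\theta_{t-1}$, using the gradient control built into $\Omega^{(1)}_{j,B}$, and hence in $K_{j+1,x}$ by \eqref{eq:Theta_tilde}. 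Since the whole segment then lies in the convex set $K_{j+1,x}$ on which $\nabla F_{n_B}$ is $L_j$-Lipschitz, the descent lemma gives $F_{n_B}\big(\theta_{t-1}-v\nabla F_{n_B}(\theta_{t-1})\big)\leq F_{n_B}(\theta_{t-1})-\tfrac{v}{2}\|\nabla F_{n_B}(\theta_{t-1})\|^2$ for every $v\leq 1/L_j$, so the backtracking loop of Algorithm \ref{algo:GD1} terminates with an accepted step-size $v_t\geq\beta/L_j$. Consequently \eqref{eq:vt} holds along the run with $c=\beta/L_j$.

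With the step-size bound in hand both conclusions follow from the single estimate. For the optimisation error I would invoke Proposition \ref{prop:GD_result_0} with $g=F_{n_B}$, convex set $K=K_{j+1,x}$, minimiser $x_\star=\hat{\theta}_{K_{j+1,x},n_B}$ and $c=\beta/L_j$; its hypotheses hold on $\Omega^{(2)}_{j,B}$ (iterates in $K$, strict convexity on $K$, attained minimum), giving $F_{n_B}(\hat{\theta}_{\theta_0,B})-F_{n_B}(\hat{\theta}_{K_{j+1,x},n_B})\leq L_j\|\theta_0-\hat{\theta}_{K_{j+1,x},n_B}\|^2/(2\beta T)$. For the per-iteration cost, the same bound $v_t\geq\beta/L_j$ caps the number of backtracking evaluations: the loop first accepts at a step-size $\beta^{k}$ with $k\leq\lceil \log L_j/\log(1/\beta)\rceil=:\bar{k}_j$, so each completed iteration uses at most $\bar{k}_j+1$ evaluations of $F_{n_B}$ and one of $\nabla F_{n_B}$, i.e.\ $C_{\theta_0,B,t}\leq n_B\big(C_{\mathrm{grad}}+(\bar{k}_j+1)C_{\mathrm{eval}}\big)$ (absorbing the one-off initial evaluations into the constant). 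Taking $D_j=\max\big(L_j/(2\beta),\,2C_{\mathrm{grad}}+(\bar{k}_j+2)C_{\mathrm{eval}}\big)$ delivers both inequalities, with the convention $a/0=+\infty$ covering the degenerate case $T=0$. The main obstacle is precisely this step-size lemma: since global convexity and smoothness of $F_{n_B}$ are unavailable, one must confine not only the iterates but also every backtracking trial point to the controlled set $K_{j+1,x}$, which is exactly what the gradient-norm padding in the definition of $\widetilde{\Theta}_{j,x}$ guarantees, and this one estimate then simultaneously yields the $\bigO(1/T)$ optimisation error and the uniform per-iteration cost.
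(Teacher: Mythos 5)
Your proposal is correct and follows essentially the same route as the paper's proof: take $K_{j+1,x}=\widetilde{\Theta}_{j,x}$, put on the event $\Omega^{(2)}_{j,B}$ the strict convexity and $L_j$-smoothness of $F_{n_B}$ on that set (you get both from the eigenvalue bounds of Corollary \ref{cor:convex}, the paper from Corollary \ref{cor:convex} plus Lemma \ref{lemma:assume1} and the law of large numbers), lower-bound the backtracking step-sizes by $\beta/L_j$, then apply Proposition \ref{prop:GD_result_0} for the function-value bound and count backtracking evaluations for the per-iteration cost. If anything you are slightly more careful than the paper, which invokes Proposition \ref{prop:GD_result} with $\tilde{K}=\Theta_{j+1,x}$ even though its hypothesis that $\tilde{K}$ contain the full sublevel set of $F_{n_B}$ need not hold (only the iterates' membership in $\Theta_{j+1,x}$, supplied separately by Lemma \ref{lemma:stay_C}, is actually used, which is exactly the argument you re-derive), and your constants $L_j/(2\beta)$ and $\bar{k}_j+1$ correct what appear to be typos in the paper's $\beta/(2L_j)$ and $\beta^{k_j}+1$.
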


\begin{proof}[Proof of Lemma \ref{lemma:Error_F}]

Let  $j\in\mathbb{N}_0$ and  $\widetilde{\Theta}_{j,x}$ be a as defined in \eqref{eq:Theta_tilde}. Recall from the proof of Lemma  \ref{lemma:stay_C} that $\widetilde{\Theta}_{j,x}$ is a compact and convex set, and for all $B\in\mathbb{N}$ 
 let  
\begin{align*}
\Omega^{(2)}_{j,B}=\Omega^{(1)}_{j,B}\cap \ \Big\{ \omega\in\Omega:\, & \text{$F^\omega_{n_B}$  is strictly convex on $\widetilde{\Theta}_{j,x}$},\,\\
&  \|\nabla F^\omega_{n_B}(\theta)-\nabla F^\omega_{n_B}(\theta')\|\leq 2 \E[M'_{\tilde{\Theta}_{j,x}}(Z)] \|\theta-\theta'\|  \text{ for all $(\theta,\theta')\in \widetilde{\Theta}^2_{j,x}$}\Big\}
\end{align*}
with $\Omega^{(1)}_{j,B}$  as in the statement of Lemma \ref{lemma:stay_C}  and with the function $M'_{\tilde{\Theta}_{j,x}}(\cdot)$  as in Lemma \ref{lemma:assume1} (when $\Theta=\tilde{\Theta}_{j,x}$).  Remark that, by Lemma \ref{lemma:assume1}, Corollary \ref{cor:convex}  and Lemma \ref{lemma:stay_C}, we have $\lim_{B\rightarrow\infty}\P(\Omega^{(2)}_{j,B})=1$.

To prove the result of the lemma let $L_{j}=\big(2 \E[M'_{\tilde{\Theta}_{j,x}}(Z)]\vee 1\big)$, $\theta_0\in\Theta_{j,x}$, $B\in\mathbb{N}$ and $\omega\in\Omega_{j,B}^{(2)}$. Remark that the function $F_{n_B}^\omega$ is strictly convex and $L_{j}$-smooth on the compact and convex set $\tilde{\Theta}_{j,x}$, and that since $\omega\in\Omega_{j,B}^{(1)}$ we have $\theta^\omega_{\theta_0,B,t}\in\Theta_{j+1,x}$ for all $t\in\{0,\dots, T_{\theta_0,B}^\omega\}$. 

Assume first  that $T_{\theta_0,B}^\omega>0$  and for all $t\in\{1,\dots,T_{\theta_0,B}^\omega\}$     let $v_{t}^\omega$ be such that $\theta^\omega_{\theta_0,B,t}=\theta^\omega_{\theta_0,B,t-1}+v_{t}^\omega\nabla F_{n_B}(\theta^\omega_{\theta_0,B,t-1})$. Then,  by applying Proposition \ref{prop:GD_result} with  $g=F_{n_B}^\omega$, $K=\tilde{\Theta}_{j,x}$, $\tilde{K}=\Theta_{j+1,x}$ and $L_K=L_{j}$, it follows that $v_t^\omega\geq \beta/L_{j}$ for all 
$t\in\{1,\dots,T_{\theta_0,B}^\omega\}$. From this result, we readily obtain that 
$$
C^\omega_{\theta_0,B,t}\leq D_j n_B,\quad D_j=(\beta^{k_j}+1)(C_{\mathrm{eval}}+C_{\mathrm{grad}}),\quad k_j=\inf\{k\in\mathbb{N}:\,\beta^k <\beta/L_{j}\}.
$$
 In addition, by Proposition \ref{prop:GD_result_0}, we have
$$
F^\omega_{n_B}(\hat{\theta}^\omega_{ \theta_0,B})-F^\omega_{n_B}(\hat{\theta}^\omega_{\tilde{\Theta}_{j,x},n_B})\leq \frac{\beta\|\theta_0-\hat{\theta}_{\tilde{\Theta}_{j,x},n_B}\|^2}{2 L_{j}T^\omega_{\theta_0,B}}.
$$
Noting that this latter inequality also holds if $T_{\theta_0,B}^\omega=0$, the proof of the lemma is complete.
\end{proof}

\subsubsection{A third key property of Algorithm \ref{algo:GD1}}

\begin{lemma}\label{lemma:Error_T}
Consider the set-up of Lemma \ref{lemma:Error_F}. Assume  in addition that there exists a constant $c\in(0,1)$ such that $\P(\tilde{B}_{B}\geq c B)=1$ for  all $B\in\mathbb{N}$ and that  $\lim_{B\rightarrow\infty} n_B/B=0$ and that $\lim_{B\rightarrow\infty} \tau_B=0$. Then, for all $j\in\mathbb{N}_0$, there exist a constant  $D'_{j}\in(0,\infty)$ and a  sequence $(\Omega^{(3)}_{j,B})_{B\geq 1}$ in $\F$  such that $\lim_{B\rightarrow\infty}\P(\Omega^{(3)}_{j,B})=1$ and  such that, with $K_{j+1,x}$ as in Lemma \ref{lemma:Error_F},
$$
\|\hat{\theta}^\omega_{\theta_0,B}-\hat{\theta}^\omega_{K_{j+1,x},n_B}\|\leq \frac{D'_{j}\|\theta_0-\hat{\theta}^\omega_{K_{j+1,x},n_B}\|}{\sqrt{T_{\theta_0,B}^\omega}},\quad\forall\omega\in\Omega^{(3)}_{j,B},\quad\forall\theta_0\in\Theta_{j,x},\quad\forall B\in\mathbb{N}.
$$
\end{lemma}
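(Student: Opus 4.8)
The plan is to upgrade the function-value suboptimality bound supplied by Lemma~\ref{lemma:Error_F} into the claimed iterate-distance bound by means of a quadratic growth (i.e.\ strong convexity) inequality, the strong-convexity constant being provided, uniformly in $\omega$ and $B$, by Corollary~\ref{cor:convex}. Throughout I keep the set $K_{j+1,x}$ of Lemma~\ref{lemma:Error_F} (namely the compact and convex set $\widetilde{\Theta}_{j,x}$ defined in \eqref{eq:Theta_tilde}), and I recall that $\hat{\theta}_{K_{j+1,x},n_B}$ minimises $F_{n_B}$ over $K_{j+1,x}$ by definition while, by Lemma~\ref{lemma:stay_C}, on the event $\Omega^{(1)}_{j,B}$ every iterate---and in particular the output $\hat{\theta}_{\theta_0,B}$---lies in $\Theta_{j+1,x}\subseteq K_{j+1,x}$, for every starting point $\theta_0\in\Theta_{j,x}$ simultaneously.

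First I would fix a uniform strong-convexity constant. Applying Corollary~\ref{cor:convex} to the compact and convex set $K_{j+1,x}$ yields a constant $\mu_j\in(0,1)$ with $\liminf_{n\to\infty}\inf_{\theta\in K_{j+1,x}}\lambda_{\min}\big(\nabla^2 F_n(\theta)\big)\geq \mu_j$, $\P$-a.s. Since $n_B\to\infty$, this almost-sure liminf statement implies that the events $\{\inf_{\theta\in K_{j+1,x}}\lambda_{\min}(\nabla^2 F_{n_B}(\theta))\geq \mu_j/2\}$ have probability tending to one. I would then set $\Omega^{(3)}_{j,B}$ to be the intersection of this event with the event $\Omega^{(2)}_{j,B}$ of Lemma~\ref{lemma:Error_F} (which already contains $\Omega^{(1)}_{j,B}$), so that $\lim_{B\to\infty}\P(\Omega^{(3)}_{j,B})=1$ and, on $\Omega^{(3)}_{j,B}$, the function $F_{n_B}$ is $(\mu_j/2)$-strongly convex on $K_{j+1,x}$ while the conclusions of Lemmas~\ref{lemma:stay_C} and~\ref{lemma:Error_F} hold.

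Next, on $\Omega^{(3)}_{j,B}$ and for any $\theta_0\in\Theta_{j,x}$ with $T^\omega_{\theta_0,B}\geq 1$, I would derive quadratic growth at the constrained minimiser $\hat{\theta}^\omega_{K_{j+1,x},n_B}$. Using the first-order optimality condition $\nabla F^\omega_{n_B}(\hat{\theta}^\omega_{K_{j+1,x},n_B})^\top(\theta-\hat{\theta}^\omega_{K_{j+1,x},n_B})\geq 0$, valid for all $\theta\in K_{j+1,x}$, together with the $(\mu_j/2)$-strong convexity of $F^\omega_{n_B}$ on the convex set $K_{j+1,x}$, a Taylor expansion (whose intermediate point lies in $K_{j+1,x}$ by convexity) gives
\begin{equation*}
F^\omega_{n_B}(\hat{\theta}^\omega_{\theta_0,B})-F^\omega_{n_B}(\hat{\theta}^\omega_{K_{j+1,x},n_B})\geq \frac{\mu_j}{4}\big\|\hat{\theta}^\omega_{\theta_0,B}-\hat{\theta}^\omega_{K_{j+1,x},n_B}\big\|^2,
\end{equation*}
where the key point is that $\hat{\theta}^\omega_{\theta_0,B}\in K_{j+1,x}$ by Lemma~\ref{lemma:stay_C}. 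Combining this with the bound $F^\omega_{n_B}(\hat{\theta}^\omega_{\theta_0,B})-F^\omega_{n_B}(\hat{\theta}^\omega_{K_{j+1,x},n_B})\leq D_j\|\theta_0-\hat{\theta}^\omega_{K_{j+1,x},n_B}\|^2/T^\omega_{\theta_0,B}$ of Lemma~\ref{lemma:Error_F} and taking square roots yields the claim with $D'_j=2\sqrt{D_j/\mu_j}$; the case $T^\omega_{\theta_0,B}=0$ is trivial since the right-hand side is then infinite.

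The main obstacle is securing a growth constant that is uniform in $\omega$, in $B$, and over all admissible starting points $\theta_0\in\Theta_{j,x}$, rather than the merely qualitative strict convexity already recorded in $\Omega^{(2)}_{j,B}$, which would give an $\omega$- and $B$-dependent constant and hence no single finite $D'_j$. This is exactly what the quantitative eigenvalue lower bound of Corollary~\ref{cor:convex} furnishes, and its use is where the argument relies on $n_B\to\infty$, so that $F_{n_B}$ inherits the uniform lower curvature of $F$ on $K_{j+1,x}$ with probability tending to one. (The extra hypotheses $\P(\tilde{B}_B\ge cB)=1$ and $\tau_B\to0$ are not needed for this per-$\omega$ inequality itself; they serve to guarantee that $T^\omega_{\theta_0,B}$ grows, and hence that the bound is non-vacuous, in the subsequent applications.) Beyond the uniform curvature bound, the remaining ingredients---that the output iterate stays in $K_{j+1,x}$ and that the function-value error is controlled by $1/T^\omega_{\theta_0,B}$---are already in place from Lemmas~\ref{lemma:stay_C} and~\ref{lemma:Error_F}, so the passage from function values to distances is routine.
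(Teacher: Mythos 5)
Your proof is correct, but it takes a genuinely different---and more direct---route than the paper's. The paper proceeds in two stages: it first establishes the consistency statement $\sup_{\theta_0\in\Theta_{j,x}}\|\hat{\theta}_{\theta_0,B}-\hat{\theta}_{K_{j+1,x},n_B}\|=\smallo_\P(1)$ (this is where the extra hypotheses $\P(\tilde{B}_B\geq cB)=1$, $n_B/B\to 0$ and $\tau_B\to 0$ are actually consumed, via a case distinction according to whether the tolerance criterion or the budget stopped the run), and only then, on the event where this distance is below a fixed threshold $\delta$ and where $\hat{\theta}_{K_{j+1,x},n_B}$ is an interior (hence stationary) point of $K_{j+1,x}$, performs a second-order expansion around $\hat{\theta}_{K_{j+1,x},n_B}$ with the Hessian frozen at that point and the remainder controlled through the Lipschitz constant of $\nabla^2 F_{n_B}$, which yields quadratic growth with constant $\lambda_{K_{j+1,x}}/4$. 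You short-circuit both stages: the variational inequality $\nabla F_{n_B}(\hat{\theta}_{K_{j+1,x},n_B})^\top(\theta-\hat{\theta}_{K_{j+1,x},n_B})\geq 0$ at the constrained minimiser (which needs neither interiority nor a vanishing gradient) combined with the Lagrange form of the Taylor remainder, whose intermediate point stays in $K_{j+1,x}$ by convexity, gives quadratic growth directly from the uniform eigenvalue lower bound of Corollary \ref{cor:convex}. This buys a shorter argument which, as you correctly observe, does not need the additional budget/tolerance hypotheses for the per-$\omega$ inequality itself; in the paper those hypotheses are genuinely used, but only because of its consistency pre-step, and downstream results rely solely on the stated conclusion of the lemma, so your proof is a valid substitute. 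One caveat, shared with the paper rather than specific to you: your probability claim transfers the almost-sure statement of Corollary \ref{cor:convex}, valid as $n\to\infty$, to events indexed by $n_B$, which requires $n_B\to\infty$; this is not implied by the stated hypotheses ($n_B/B\to 0$ allows bounded $n_B$), but the paper's own proof makes the identical implicit assumption when invoking Lemmas \ref{lemma:uniform_conv}, \ref{lemma:e-value} and \ref{lemma:MLE} along $(n_B)_{B\geq 1}$, and every application of the lemma has $n_B\to\infty$, so this is a harmless shared gap rather than a flaw in your argument.
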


\begin{proof}[Proof of Lemma \ref{lemma:Error_T}]

Let $j\in\mathbb{N}_0$, and  to simplify the notation in what follows  let $K_x=K_{j+1,x}$ and, for  all $\theta\in\R^d$   and $n\geq 1$, let $H_n(\theta)=\nabla^2 F_n(\theta)$.

We start by showing that
\begin{align}\label{eq:conv_MLE}
\sup_{ \theta_0 \in  \Theta_{j,x}}\|\hat{\theta}_{ \theta_0,B}-\hat{\theta}_{K_{x},n_B}\|=\smallo_\P(1).
\end{align}
To this aim let $M'_{K_{x}}(\cdot)$ be as defined in Lemma \ref{lemma:assume1} when $\Theta=K_{x}$,
$$
\lambda_{K_x}=\frac{1}{2}\inf_{\theta\in K_x}\lambda_{\min}\big(\E\big[\nabla^2 f(\theta,Z)\big]\big)
$$
and, for all $\epsilon>0$ and   $B\in\mathbb{N}$, let
\begin{align*}
\Omega_{\epsilon,B}&=\Omega^{(1)}_{j,B}\cap \Omega^{(2)}_{j,B}\\
&\cap \bigg\{ \omega\in\Omega:\, \,\tilde{B}_B^\omega\geq c B,\,\,\inf_{\theta\in K_x}\lambda_{\min}\big(H^\omega_{n_B}(\theta)\big)\geq \lambda_{K_x},\,\, \|\hat{\theta}^\omega_{K_x,n_B}-\theta_\star\|\leq \epsilon,\,\,\hat{\theta}^\omega_{K_x,n_B}\in\mathring{K}_x,\\
&\hspace*{1cm}\,\,\sup_{\theta\in K_x}|F_{n_B}^\omega(\theta)-F(\theta)|\leq \epsilon,\,\,\frac{1}{n_B}\sum_{i=1}^{n_B}M'_{K_x}(Z^\omega)\leq 2\E[M'_{K_x}(Z)]\bigg\}  
\end{align*}
with   $\Omega^{(1)}_{j,B}$ as in Lemma \ref{lemma:stay_C} and with $\Omega^{(2)}_{j,B}$ as in Lemma \ref{lemma:Error_F}. Remark that $\theta_\star\in\mathring{K}_x$ (see Lemma \ref{lemma:stay_C}-\ref{lemma:Error_F}) and thus, by Lemmas \ref{lemma:assume1}, \ref{lemma:uniform_conv}, \ref{lemma:e-value}, \ref{lemma:MLE}, \ref{lemma:stay_C} and \ref{lemma:Error_F},  we have $\liminf_{B\rightarrow\infty}\P(\Omega_{\epsilon,B})=1$ for all $\epsilon>0$.  

We now let $\theta_0\in \Theta_{j,x}$,  $\epsilon>0$, $B\in\mathbb{N}$ and $\omega\in \Omega_{\epsilon,B}$, and assume first that $\|\nabla  F^\omega_{n_B}(\hat{\theta}^\omega_{ \theta_0,B})\|\leq  \tau_B$. By Lemma \ref{lemma:assume2} and under \ref{assumption1} we have $\lambda_{K_x}>0$,  implying that the function $F^\omega_{n_B}$ is strictly convex on the compact and convex set $K_x$. Consequently, since $\hat{\theta}^\omega_{K_x,n_B}\in\mathring{K}_x$, we have $\nabla F^\omega_{n_B}(\hat{\theta}^\omega_{K_x,n_B})=0$. Therefore,  using Taylor's theorem, the fact that $\|Ax\|\geq \lambda_{\min}(A)\|x\|$ for any $d\times d$ symmetric matrix $A$ and $x\in\R^d$, and noting that $\hat{\theta}^\omega_{ \theta_0,B}\in \Theta_{j+1,x}\subset K_x$, we have
\begin{align*}
\tau_B\geq \|\nabla  F^\omega_{n_B}(\hat{\theta}^\omega_{ \theta_0,B})\|\geq \lambda_{K_x}\,\|\hat{\theta}^\omega_{ \theta_0,B}-\hat{\theta}_{K_{x},n_B}\|
\end{align*}
showing that
\begin{align}\label{eq:case1}
\|\hat{\theta}^\omega_{\theta_0,B}-\hat{\theta}^\omega_{K_{x},n_B}\|\leq \frac{\tau_B}{  \lambda_{K_x}},\quad\text{ if }\|\nabla  F^\omega_{n_B}(\hat{\theta}^\omega_{ \theta_0,B})\|\leq  \tau_B.
\end{align}
Assume now that  $\|\nabla  F^\omega_{n_B}(\hat{\theta}^\omega_{ \theta_0,B})\|>\tau_B$, in which case we have
\begin{align}\label{eq:T_L}
T_{\theta_0,B}^\omega\geq \frac{c\,B}{D_{j} n_B}
\end{align}
with $D_{j}<\infty$ as in Lemma \ref{lemma:Error_F}. Under \ref{assumption1} there exists a constant $\mu_x>0$ such that
$$
 |F(\theta)-F(\theta_\star)\big|\geq \frac{\mu_x}{2}\|\theta-\theta_\star\|^2,\quad\forall \theta\in K_x
$$
and thus
 \begin{equation}\label{eq:split_conv0}
 \begin{split}
\frac{\mu_x}{2}\|\hat{\theta}^\omega_{ \theta_0,B}-\theta_\star\|^2\leq\big|F(\hat{\theta}^\omega_{ \theta_0,B})-F(\theta_\star)\big|&\leq \big|F(\hat{\theta}^\omega_{ \theta_0,B})-F^\omega_{n_B}(\hat{\theta}^\omega_{\theta_0,B})\big|+\big|F^\omega_{n_B}(\hat{\theta}^\omega_{ \theta_0,B})-F(\theta_\star)\big|\\
&\leq \epsilon +\big|F^\omega_{n_B}(\hat{\theta}^\omega_{ \theta_0,B})-F(\theta_\star)\big|.
 \end{split}
 \end{equation}
Recalling that $\theta_\star\in K_{x}$, we have
 \begin{equation}\label{eq:split_conv}
 \begin{split}
 \big|F^\omega_{n_B}(\hat{\theta}^\omega_{ \theta_0,B})-F(\theta_\star)\big|&\leq  \big|F^\omega_{n_B}(\hat{\theta}^\omega_{  \theta_0,B})- F^\omega_{n_B}\big(\hat{\theta}^\omega_{K_{x},n_B}\big)\big|+\big| F^\omega_{n_B}\big(\hat{\theta}^\omega_{K_{x},n_B}\big)-F^\omega_{n_B}\big(\theta_\star)\big|\\
 &+\big|F^\omega_{n_B}\big(\theta_\star)-F(\theta_\star)\big|\\
 &\leq  \big|F^\omega_{n_B}(\hat{\theta}^\omega_{ \theta_0,B})- F^\omega_{n_B}\big(\hat{\theta}^\omega_{K_x,n_B}\big)\big|+\Big(\frac{1}{n_B}\sum_{i=1}^{n_B}M'_{K_x}(Z^\omega_i)\Big)\|  \hat{\theta}^\omega_{K_x,n_B} - \theta_\star\big\|\\
 &+\sup_{\theta\in K_x}\big|F^\omega_{n_B}\big(\theta)-F(\theta)\big|\\
 &\leq \big|F^\omega_{n_B}(\hat{\theta}^\omega_{ \theta_0,B})- F^\omega_{n_B}\big(\hat{\theta}^\omega_{K_x,n_B}\big)\big|+ \epsilon\big(2\E[M'_{K_x}(Z)]+1\big)\\
 &\leq \frac{D_{j} \sup_{(\theta,\theta')\in K_x^2}\|\theta-\theta'\|^2}{T^\omega_{\theta_0,B}}+ \epsilon\big(2\E[M'_{K_x}(Z)]+1\big)\\
 &\leq \frac{n_B\, D^2_{j} \sup_{(\theta,\theta')\in K_x^2}\|\theta-\theta'\|^2}{c B}+ \epsilon\big(2\E[M'_{K_x}(Z)]+1\big)
 \end{split}
 \end{equation}
 where the last inequality holds by \eqref{eq:T_L}.
 
 By combining \eqref{eq:split_conv0} and \eqref{eq:split_conv} we obtain that if  $\|  F^\omega_{n_B}(\hat{\theta}^\omega_{ \theta_0,B})\|> \tau_B $ then
 $$
 \|\hat{\theta}^\omega_{ \theta_0,B}-\hat{\theta}^\omega_{K_{x},n_B}\|\leq \epsilon+ \sqrt{\frac{2}{\mu_x}}\Big(\epsilon+\frac{n_B\, D^2_{j} \sup_{(\theta,\theta')\in K_x^2}\|\theta-\theta'\|^2}{c B}+ \epsilon\big(2\E[M'_{K_x}(Z)]+1\big)\bigg)^{\frac{1}{2}}.
 $$

Together with \eqref{eq:case1}, this shows that for all $\epsilon>0$, $B\in\mathbb{N}$ and $\omega\in\Omega_{\epsilon,B}$, we have
\begin{align*}
\sup_{\theta_0\in \Theta_{j,x}} \|\hat{\theta}^\omega_{ \theta_0,B}-\hat{\theta}^\omega_{K_{x},n_B}\|&\leq \epsilon+\frac{\tau_B}{  \lambda_{K_x}} \\
&+\sqrt{\frac{2}{\mu_x}}\Big(\epsilon+\frac{n_B\, D^2_{j} \sup_{(\theta,\theta')\in K_x^2}\|\theta-\theta'\|^2}{c B}+ \epsilon\big(2\E[M'_{K_x}(Z)]+1\big)\bigg)^{\frac{1}{2}}
 \end{align*}
and \eqref{eq:conv_MLE} follows since, by assumption, $\lim_{B\rightarrow\infty} (n_B/B)=\lim_{B\rightarrow\infty}\tau_B=0$ while $\epsilon>0$ is arbitrary.

To proceed further, let  
$$
 \delta =  \frac{\lambda_{K_x}}{4\E[M'_{K_x}(Z)]}
$$
and, for all $B\in\mathbb{N}$, let
\begin{align*}
\Omega^{(3)}_{j,B}&=\Omega^{(1)}_{j,B}\cap \Omega^{(2)}_{j,B}\\
&\cap\Big\{ \omega\in\Omega: \hat{\theta}^\omega_{K_x,n_B}\in\mathring{K}_x, \sup_{\theta_0\in  \Theta_{j,x}}\|\hat{\theta}^\omega_{\theta_0,B}-\hat{\theta}^\omega_{K_x,n_B}\|\leq \delta,\quad \inf_{ \theta  \in K_x}\lambda_{\min}\big(H^\omega_{n_B}(\theta)\big)\geq \lambda_{K_x},\\
&  \sup_{(\theta,\theta')\in K_x^2\,:\,\theta\neq\theta'}\frac{\|\nabla^2 F^\omega_{n_B}(\theta)-\nabla^2 F^\omega_{n_B}(\theta')\|}{\|\theta-\theta'\|}\leq 2 \E[M'_{K_x}(Z)]\Big\}.
\end{align*}
Note that,  by Lemmas \ref{lemma:assume1}, \ref{lemma:e-value}, \ref{lemma:stay_C} and \ref{lemma:Error_F},  and using  \eqref{eq:conv_MLE}, we have $\liminf_{B\rightarrow\infty}\P(\Omega^{(3)}_{j,B})=1$.

We now let $B\in \mathbb{N}$, $\theta_0\in\Theta_{j,x}$ and $\omega\in\Omega^{(3)}_{j,x,B}$. Then, noting that $\nabla F^\omega_{n_B}(\hat{\theta}^\omega_{K_x,n_B})=0$ since $\hat{\theta}^\omega_{K_x,n_B}\in\mathring{K}_x$ and the function $F^\omega_{n_B}$ is strictly convex on the compact and convex set $K_x$, it follows from Taylor's theorem \citep[see][Theorem 5.8 and Remark 5.9]{amann2005analysis} that
$$
F^\omega_{n_B}(\hat{\theta}^\omega_{  \theta_0,B})-F^\omega_{n_B}(\hat{\theta}^\omega_{K_x,n_B})=\frac{1}{2}\big(\hat{\theta}^\omega_{ \theta_0,B}-\hat{\theta}^\omega_{K_x,n_B})^\top H^\omega_{n_B}(\hat{\theta}^\omega_{K_x,n_B})\big(\hat{\theta}^\omega_{ \theta_0,B}-\hat{\theta}^\omega_{K_x,n_B})^\top+R^\omega_{\theta_0,B}
$$
where
$$
|R^\omega_{ \theta_0,B}|\leq       \| \hat{\theta}^\omega_{  \theta_0,B}-\hat{\theta}^\omega_{K_x,n_B}\|^2  \delta \E[M'_{K_x}(Z)].
$$
Therefore, with $D_{j}<\infty$ as in Lemma \ref{lemma:Error_F}, we have
\begin{align*}
\frac{D_{j}\,\|\theta_0-\hat{\theta}^\omega_{K_x,n_B}\|^2}{T^\omega_{ \theta_0,B}}&\geq F^\omega_{n_B}(\hat{\theta}^\omega_{  \theta_0,B})-F^\omega_{n_B}(\hat{\theta}^\omega_{K_x,n_B})\\
&\geq  \| \hat{\theta}^\omega_{ \theta_0,B}-\hat{\theta}^\omega_{K_x,n_B}\|^2\Big(\frac{\lambda_{K_x}}{2}- \delta \E[M'_\Theta(Z)]\Big)\\
 &=\frac{\lambda_{K_x}}{4} \| \hat{\theta}^\omega_{ \theta_0,B}-\hat{\theta}^\omega_{K_x,n_B}\|^2
\end{align*}
and the result   of the lemma follows from the fact that $\lambda_{K_x}>0$ under \ref{assumption1} and by Lemma \ref{lemma:assume2}.
\end{proof}

\subsubsection{A fourth key property of Algorithm \ref{algo:GD1}}

\begin{lemma}\label{lemma:budget}
Consider the set-up of Lemma \ref{lemma:Error_T}. Assume in addition that for some   sequence $(r_B)_{B\geq 1}$  in $(0,\infty)$ we have $\lim_{B\rightarrow\infty}(n_B r_B^2)/(\tau_B^2 B)=0$. Then, for all $j\in\mathbb{N}_0$ and $\epsilon\in (0,1]$, there exist  a constant $\tilde{C}_{j,\epsilon}\in[1,\infty)$ and a sequence $(\Omega^{(4)}_{j,\epsilon,B})_{B\geq 1}$ in $\F$,  such that $\liminf_{B\rightarrow\infty}\P(\Omega_{j,\epsilon,B}^{(4)})\geq 1-\epsilon$, such that,   for all $B\in\mathbb{N}$, $\omega\in\Omega_{j,\epsilon,B}^{(4)}$ and $\theta_0\in \Theta_{j,x}$ verifying  $\|\theta_0-\hat{\theta}^\omega_{K_{j+1,x},n_B}\|\leq r_B$ (with $K_{j+1,x}$ in Lemma \ref{lemma:Error_F}) we have
\begin{align}
& \hat{\theta}^\omega_{\theta_0,B}\in\Theta_{j+1,x},\label{p1}\\
&\tilde{B}_{\theta_0}^\omega\geq \frac{c B}{2},\label{p2}\\
&\|\hat{\theta}^\omega_{\theta_0,B}-\hat{\theta}^\omega_{K_{j+1,x}, n_B}\|\leq \tilde{C}_{j,\epsilon}\bigg(\tau_B+r_B\sqrt{\frac{n_B}{B}}  \bigg)\label{p3}\\
&\|\hat{\theta}^\omega_{\theta_0,B}-\theta_\star\|\leq \tilde{C}_{j,\epsilon,x}\bigg(\tau_B+ n_B^{-\frac{\alpha}{1+\alpha}}+r_B\sqrt{\frac{n_B}{B}}\bigg)\label{p4}
\end{align}
where $c\in(0,1)$ is as in Lemma \ref{lemma:Error_T} and where $\alpha\in(0,1]$ is as in \ref{assumption2}.

\end{lemma}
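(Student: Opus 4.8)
The plan is to construct the event $\Omega^{(4)}_{j,\epsilon,B}$ as the intersection of the high-probability events $\Omega^{(1)}_{j,B}$, $\Omega^{(2)}_{j,B}$ and $\Omega^{(3)}_{j,B}$ furnished by Lemmas \ref{lemma:stay_C}--\ref{lemma:Error_T}, together with $\{\tilde{B}_B\geq cB\}$, the interiority event $\{\hat{\theta}_{K_{j+1,x},n_B}\in\mathring{K}_{j+1,x}\}$, the eigenvalue event $\{\inf_{\theta\in K_{j+1,x}}\lambda_{\min}(\nabla^2 F_{n_B}(\theta))\geq\lambda_{K_{j+1,x}}\}$ (with $\lambda_{K_{j+1,x}}=\tfrac12\inf_{\theta\in K_{j+1,x}}\lambda_{\min}(\E[\nabla^2 f(\theta,Z)])>0$), and the $\bigO_\P$-type event $\{\|\hat{\theta}_{K_{j+1,x},n_B}-\theta_\star\|\leq M_\epsilon\, n_B^{-\alpha/(1+\alpha)}\}$, where $M_\epsilon$ is the constant supplied by Lemma \ref{lemma:MLE}. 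All but the last of these have probability tending to one (Lemmas \ref{lemma:e-value}, \ref{lemma:stay_C}--\ref{lemma:Error_T} and the hypothesis $\P(\tilde{B}_B\geq cB)=1$), while the last has limiting probability at least $1-\epsilon$, so $\liminf_{B\rightarrow\infty}\P(\Omega^{(4)}_{j,\epsilon,B})\geq 1-\epsilon$. On $\Omega^{(1)}_{j,B}$ every iterate produced from $\theta_0\in\Theta_{j,x}$ stays in $\Theta_{j+1,x}$ by Lemma \ref{lemma:stay_C}, which gives \eqref{p1} at once; the remaining task is to pin down the stopping behaviour of Algorithm \ref{algo:GD1}, identifying $K_{j+1,x}$ with the set $\widetilde{\Theta}_{j,x}$ of \eqref{eq:Theta_tilde} so that the smoothness constant $L_j$ and the minimiser $\hat{\theta}_{K_{j+1,x},n_B}$ match those of Lemma \ref{lemma:Error_F}.

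The core of the argument is \eqref{p2}: I would show that the gradient tolerance, not the budget, triggers termination. Fix $\omega\in\Omega^{(4)}_{j,\epsilon,B}$ and $\theta_0$ with $\|\theta_0-\hat{\theta}_{K_{j+1,x},n_B}\|\leq r_B$. On this event $F_{n_B}$ is $L_j$-smooth and $\lambda_{K_{j+1,x}}$-strongly convex on $K_{j+1,x}$, its minimiser there is interior (so its gradient there vanishes), and all iterates remain in $\Theta_{j+1,x}\subseteq K_{j+1,x}$. Hence Proposition \ref{prop:GD_result} bounds the step-sizes below by $\beta/L_j$, and Proposition \ref{prop:GD_result_0} applied at each iteration $t$ yields $F_{n_B}(\theta_{B,t})-F_{n_B}(\hat{\theta}_{K_{j+1,x},n_B})\leq L_j\|\theta_0-\hat{\theta}_{K_{j+1,x},n_B}\|^2/(2\beta t)$. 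Combining this with strong convexity and then smoothness (and the vanishing gradient at $\hat{\theta}_{K_{j+1,x},n_B}$) produces a deterministic estimate $\|\nabla F_{n_B}(\theta_{B,t})\|\leq c_1 r_B/\sqrt{t}$ for a constant $c_1$ depending only on $j$, so the gradient falls below $\tau_B$ by iteration $T_*:=\lceil (c_1 r_B/\tau_B)^2\rceil$. Since each accepted iteration costs at most $D_j n_B$ by Lemma \ref{lemma:Error_F} and the initial evaluations cost $\bigO(n_B)$, the budget needed to reach iteration $T_*$ is at most $(T_*+1)D_j n_B+\bigO(n_B)$; dividing by $B$ and invoking both $n_B/B\rightarrow0$ and the new hypothesis $n_B r_B^2/(\tau_B^2 B)\rightarrow0$ shows this is $o(B)$, hence at most $cB/2$ for $B$ large. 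As $\tilde{B}_B\geq cB$, the budget cannot be exhausted before iteration $T_*$, so Algorithm \ref{algo:GD1} halts at some $T\leq T_*$ with $\|\nabla F_{n_B}(\hat{\theta}_{\theta_0,B})\|\leq\tau_B$, leaving residual budget $\tilde{B}_{\theta_0}^\omega\geq cB-cB/2=cB/2$, which is \eqref{p2}.

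Because termination is then gradient-based, \eqref{p3} follows quickly: expanding $\nabla F_{n_B}$ by Taylor's theorem around the interior minimiser $\hat{\theta}_{K_{j+1,x},n_B}$ and using the eigenvalue lower bound gives $\|\nabla F_{n_B}(\hat{\theta}_{\theta_0,B})\|\geq \lambda_{K_{j+1,x}}\|\hat{\theta}_{\theta_0,B}-\hat{\theta}_{K_{j+1,x},n_B}\|$, whence $\|\hat{\theta}_{\theta_0,B}-\hat{\theta}_{K_{j+1,x},n_B}\|\leq \tau_B/\lambda_{K_{j+1,x}}\leq\tilde{C}_{j,\epsilon}(\tau_B+r_B\sqrt{n_B/B})$ once $\tilde{C}_{j,\epsilon}\geq 1/\lambda_{K_{j+1,x}}$ (the second summand being slack here). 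Inequality \eqref{p4} is obtained by the triangle inequality, bounding $\|\hat{\theta}_{\theta_0,B}-\theta_\star\|$ by the sum of the quantity just controlled and $\|\hat{\theta}_{K_{j+1,x},n_B}-\theta_\star\|\leq M_\epsilon n_B^{-\alpha/(1+\alpha)}$ from the last defining event of $\Omega^{(4)}_{j,\epsilon,B}$, enlarging the constant accordingly. I expect the delicate step to be \eqref{p2}: converting the $t^{-1/2}$ gradient decay into the iteration count $T_*=\bigO(r_B^2/\tau_B^2)$ and then certifying, via $n_B r_B^2/(\tau_B^2 B)\to0$, that the corresponding cost stays below half the budget — all uniformly over admissible starting points $\theta_0\in\Theta_{j,x}$ — is precisely where the new hypothesis of the lemma enters in an essential way.
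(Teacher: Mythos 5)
Your proposal is correct and follows essentially the same route as the paper's proof: the same event construction (intersecting the events of Lemmas \ref{lemma:stay_C}--\ref{lemma:Error_T} with the eigenvalue, interiority and Lemma \ref{lemma:MLE} events), the same Taylor/eigenvalue arguments for \eqref{p3}--\eqref{p4}, and the same budget accounting in which the new hypothesis $n_Br_B^2/(\tau_B^2B)\rightarrow0$ caps the number of iterations at $\bigO\big((r_B/\tau_B)^2\big)$ and hence the consumed budget at $cB/2$ for large $B$. The only organizational difference is that you prove gradient-based termination (\eqref{p2}) first and deduce \eqref{p3} from that single case, whereas the paper proves \eqref{p3} by a two-case analysis (gradient stop giving $\tau_B/\lambda_{K_x}$, budget stop handled by comparison with an auxiliary tolerance-zero run giving $D_j'r_B\sqrt{n_B/B}$) and reserves the large-$B$ device for \eqref{p2} alone; both orderings are legitimate since the events $\Omega^{(4)}_{j,\epsilon,B}$ may be taken trivial for small $B$.
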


\begin{proof}[Proof of Lemma \ref{lemma:budget}]

We let $j\in\mathbb{N}_0$ and to simplify the notation in what follows  we let $K_x=K_{j+1,x}$ and, for all $\theta\in\R^d$   and $n\geq 1$, we let $H_n(\theta)=\nabla^2 F_n(\theta)$.

To show  the result of the lemma,  for all $B\in\mathbb{N}$ and $\theta_0\in\R^d$ we let  
$$
(\tilde{\theta}_{\theta_0,B}, B'_{\theta_0})=\mathrm{Algorithm\, \ref{algo:GD1} }(\theta_0, n_B, \tilde{B}_B, 0,\beta)
$$
and we denote by    $\{\theta'_{\theta_0,B,t}\}_{t=0}^{\tilde{T}_{\theta_0,B}}$   the sequence in $\R^d$  generated by   Algorithm \ref{algo:GD1} to compute $\tilde{\theta}_{\theta_0,B}$ from $\theta_0$ (which is therefore such that $\theta'_{\theta_0,B,0}=\theta_0$ and such that $\theta'_{\theta_0,B,\tilde{T}_{\theta_0,B}}=\tilde{\theta}_{\theta_0,B}$).
In addition, we let
$$
\lambda_{K_x}=\frac{1}{2}\inf_{\theta\in K_x}\lambda_{\min}\big(\E\big[\nabla^2 f(\theta,Z)\big]\big),\quad \bar{\lambda}_{K_x}=2\sup_{\theta\in K_x}\lambda_{\max}\big(\E\big[\nabla^2 f(\theta,Z)\big]\big) 
$$
and, for all $\epsilon>0$, we let $\tilde{D}_{\epsilon}<\infty$ be such that
\begin{align}\label{eq:eps}
\liminf_{B\rightarrow\infty}\Big( \|\hat{\theta}_{K_x,n_B}-\theta_\star\|\leq \tilde{D}_{\epsilon}\, n_B^{-\frac{\alpha}{1+\alpha}}\Big)\geq 1-\epsilon.
\end{align}
Remark that such a finite constant $\tilde{D}_\epsilon$ exists for all $\epsilon>0$ by Lemma \ref{lemma:MLE}.

Finally, for all $B\in\mathbb{N}$ and $\epsilon>0$ we let
\begin{align*}
\tilde{\Omega}^{(4)}_{j,\epsilon,B}&=\Omega^{(1)}_{j,B}\cap \Omega^{(2)}_{j,B}\cap \Omega^{(3)}_{j,B}\\
& \cap \bigg\{ \omega\in\Omega:\, \, \hat{\theta}^\omega_{K_x,n_B}\in\mathring{K}_x,\,\,\,\,\tilde{B}_B^\omega\geq c B,\,\,\inf_{\theta\in K_x}\lambda_{\min}\big(H^\omega_{n_B}(\theta)\big)\geq \lambda_{K_x},\\
&\hspace*{1cm} \|\hat{\theta}^\omega_{K_x,n_B}-\theta_\star\|\leq \tilde{D}_{\epsilon} \,n_B^{-\frac{\alpha}{1+\alpha}},\sup_{\theta\in K_x}\lambda_{\max}\big(H^\omega_{n_B}(\theta)\big)\leq \bar{\lambda}_{K_x}\bigg\}  
\end{align*}
with  $c\in(0,1)$ as in Lemma \ref{lemma:Error_T}, $\Omega^{(1)}_{j,B}$ as in Lemma \ref{lemma:stay_C}, $\Omega^{(2)}_{j,B}$ as in Lemma \ref{lemma:Error_F} and $\Omega^{(3)}_{j,B}$ as in Lemma \ref{lemma:Error_T}. Remark that $\theta_\star\in\mathring{K}_x$ (see Lemmas \ref{lemma:stay_C}-\ref{lemma:Error_F}) and thus, by Lemma \ref{lemma:e-value}, Lemmas \ref{lemma:stay_C}-\ref{lemma:Error_T} and by \eqref{eq:eps}, we have $\liminf_{B\rightarrow\infty}\P(\tilde{\Omega}^{(4)}_{j,\epsilon,B})\geq 1-\epsilon$ for all $\epsilon>0$. In addition, remark that there exists a constant $D_{j}'<\infty$ such that
\begin{align}\label{eq:b_bound}
\|\tilde{\theta}^\omega_{\theta_0,B}-\hat{\theta}^\omega_{K_x,n_B}\|\leq D'_{j}\|\theta_0-\hat{\theta}^\omega_{K_x,n_B}\| \sqrt{\frac{n_B}{B}},\quad\forall \omega\in \tilde{\Omega}^{(4)}_{j,\epsilon,B},\quad\forall\theta_0\in\Theta_{j,x},\quad\forall B\in\mathbb{N},\quad\forall\epsilon>0.
\end{align}

We now let $\epsilon>0$, $B\in\mathbb{N}$, $\omega\in\tilde{\Omega}^{(4)}_{j,\epsilon,B}$ and $\theta_0\in\Theta_{j,x}$ be such that $\|\theta_0-\hat{\theta}^\omega_{K_x,N^\omega_B}\|\leq r_{B}$. Assume first that
$$
\|\nabla  F^\omega_{n_B}(\hat{\theta}^\omega_{ \theta_0,B})\|\leq  \tau_B 
$$
and remark that $\nabla F^\omega_{n_B}(\hat{\theta}^\omega_{K_x,n_B})=0$   since $F^\omega_{n_B}$ is strictly convex on the convex set $K_x$ (as $\lambda_{K_x}>0$ under \ref{assumption1} and by Lemma \ref{lemma:assume2}) and since  $\hat{\theta}^\omega_{K_x,n_B}\in\mathring{K}_x$.

Then, using Taylor's theorem,  the fact that $\|Ax\|\geq \lambda_{\min}(A)\|x\|$ for any $d\times d$ symmetric matrix $A$ and $x\in\R^d$, we have
\begin{align}\label{eq:tilde_1}
\tau_B\geq \|\nabla  F^\omega_{n_B}(\hat{\theta}^\omega_{ \theta_0,B})\|\geq \lambda_{K_x}\,\|\hat{\theta}^\omega_{ \theta_0,B}-\hat{\theta}_{K_{x},n_B}\|\Leftrightarrow  \|\hat{\theta}^\omega_{ \theta_0,B}-\hat{\theta}_{K_{x},n_B}\|\leq \frac{\tau_B }{ \lambda_{K_x}}.
\end{align} 

Assume now that $\|\nabla  F^\omega_{n_B}(\hat{\theta}^\omega_{ \theta_0,B})\|>  \tau_B$. In this case, $\hat{\theta}^\omega_{\theta_0,B}=\tilde{\theta}^\omega_{\theta_0,B}$ and thus 
\begin{equation}\label{eq:tilde_2}
\begin{split}
\|\hat{\theta}^\omega_{\theta_0,B}-\hat{\theta}^\omega_{K_x,n_B}\|&=\|\tilde{\theta}^\omega_{ \theta_0,B}-\hat{\theta}^\omega_{K_x,n_B}\|\leq D'_{j} \|\theta_0-\hat{\theta}^\omega_{K_x,N^\omega_B}\| \sqrt{\frac{n_B}{B}}\leq D'_{j} r_{ B} \sqrt{\frac{n_B}{B}}
\end{split}
\end{equation}
where the first inequality holds by \eqref{eq:b_bound}. Then, \eqref{p3} follows from \eqref{eq:tilde_1} and \eqref{eq:tilde_2}, implying that \eqref{p4} holds since
$$
\|\hat{\theta}^\omega_{\theta_0,B}-\theta_\star\|\leq \|\hat{\theta}^\omega_{\theta_0,B}-\hat{\theta}^\omega_{K_x,n_B}\|+\|\theta_\star-\hat{\theta}^\omega_{K_x,n_B}\|\leq \|\hat{\theta}^\omega_{\theta_0,B}-\hat{\theta}^\omega_{K_x,n_B}\|+  \tilde{D}_\epsilon\, n_B^{-\frac{\alpha}{1+\alpha}}.
$$

To show \eqref{p2}  let $D_{j}'<\infty$ be as in Lemma \ref{lemma:Error_T} and remark that, using Taylor's theorem,
\begin{equation*}
\begin{split}
\|\nabla F^\omega_{n_B}(\hat{\theta}_{\theta_0,B}^\omega)\|\leq \bar{\lambda}_{K_x}\,\|\hat{\theta}^\omega_{ \theta_0,B}-\hat{\theta}^\omega_{K_x,n_B}\|  
  &\leq  \frac{ \bar{\lambda}_{K_x} D'_{j}\|\theta_0-\hat{\theta}^\omega_{K_x,n_B}\|}{\sqrt{T^\omega_{ \theta_0,B}}}\leq  \tau_B \bar{\lambda}_{K_x} D_{j}'\sqrt{\frac{r_{B}^2}{\tau_B^2\,T^\omega_{\theta_0,B}}}
\end{split}
\end{equation*}
so that
\begin{align}\label{eq:boundTB}
  T^\omega_{\theta_0,B}\leq \min\Big(\tilde{T}^\omega_{\theta_0,B},   (r_B/\tau_B)^2 (\bar{\lambda}_{K_x} D'_{j})^2 +1  \Big).
\end{align}
To proceed further let $D_{j}<\infty$ be as in Lemma \ref{lemma:Error_F} and remark that
$$
 \tilde{T}^\omega_{\theta_0,B}\geq \frac{\tilde{B}_B^\omega}{n_B   D_{j}}\geq  \frac{c B}{  n_B  D_{j} }.
$$
Since by assumption, $\lim_{B\rightarrow\infty} (r_{B}/\tau_B)^2 n_B/B =0$,
it follows from \eqref{eq:boundTB} that   for $B$ large enough we have
\begin{align*}
T^\omega_{\theta_0,B}\leq  (r_{B}/\tau_B)^2 (\bar{\lambda}_{K_x} D'_{j})^2
\end{align*}
and thus
\begin{equation*}
\begin{split}
\tilde{B}_{\theta_0}^\omega&\geq \tilde{B}_B^\omega- T^\omega_{\theta_0,B} n_B  D_{j} \geq  c\,B-   \frac{n_B r^2_{B}}{\tau_B^2}    D_{j}(\bar{\lambda}_{K_x} D'_j)^2= B\Big(c-\frac{n_B r^2_{B}}{ \,\tau_B^2 B}\,    D_{j}
 (\bar{\lambda}_{K_x} D'_j)^2\Big).
\end{split}
\end{equation*}
Therefore, since $\lim_{B\rightarrow\infty} n_B r^2_{B}/(B\tau^2_B)=0$ by assumption, it follows that \eqref{p2} holds assuming that $B$ is sufficiently large. The result of the lemma follows.

\end{proof}

\subsection{Proof of Theorem \ref{thm:rate_GD}}

\begin{proof}[Proof of Theorem \ref{thm:rate_GD}]

Let $\epsilon\in (0,1)$. Then, by Lemma \ref{lemma:MLE} and by applying Lemmas \ref{lemma:Error_F}-\ref{lemma:Error_T} with $j=0$, $x=\theta_0$, $c=1$ and with
\begin{align*} 
n_B=n(B),\quad \tau_B=0,\quad\forall B\in\mathbb{N},
\end{align*}
it directly follows that there exists a sequence $(\Omega_{\epsilon,B})_{B\geq 1}$ in $\F$, such that $\liminf_{B\rightarrow\infty}\P(\Omega_{\epsilon,B})\geq 1-\epsilon$, and a constant  $C_{\epsilon}\in [1,\infty)$,  for which we have
\begin{align*}
\|\tilde{\theta}_{B}^\omega-\theta_\star\|\leq C_{\epsilon} \bigg(n(B)^{-\frac{\alpha}{1+\alpha}}+\sqrt{\frac{n_B}{B}}\bigg),\quad\forall \omega \in \Omega_{\epsilon,B},\quad\forall B\in\mathbb{N}.
\end{align*}
The result of the theorem follows.
\end{proof}

\subsection{Proof of Theorem \ref{thm:rate_GD_main}}

\subsubsection{Proof of the first part of Theorem \ref{thm:rate_GD_main}} 

\begin{proof}[Proof of the first part of Theorem \ref{thm:rate_GD_main}]

Noting that if \ref{assumption2} holds for some $\alpha\in (0,1]$ then it also holds for any $\tilde{\alpha}\in (0,\alpha]$, below we can without loss of generality  assume that \ref{assumption2} holds for some $\alpha\leq \alpha'$.

To simplify the notation in what follows, for  all $\theta\in\R^d$   and $n\geq 1$ we let $H_n(\theta)=\nabla^2 F_n(\theta)$ and, for all $j\in\mathbb{N}$ and $B\in\mathbb{N}$  we let $n_{j,B}=\lceil  \kappa B^{\gamma_j}\rceil$ and $\tau_{j,B}= \tau  B^{-\frac{\alpha'}{1+\alpha'} \gamma_j} $. In addition, for all $x\in \R^d$ and $j\in\mathbb{N}_0$ we let the sets $\Theta_{j,x}$ and $K_{j+1,x}$ be as defined in Lemma \ref{lemma:stay_C} and in Lemma \ref{lemma:Error_F}, respectively.

Let $\epsilon\in (0,1)$. Then, by applying Lemma \ref{lemma:budget} with $j=1$, $x=\theta_0$, $c=1$ and with
\begin{align}\label{eq:def_seq}
n_B=n_{1,B},\quad \tau_B=\tau_{1,B},\quad r_B=\sup_{(\theta,\theta')\in K_{2,\theta_0}}\|\theta-\theta'\|,\quad\forall B\in\mathbb{N},
\end{align}
it follows that there exists a sequence $(\Omega_{\epsilon,B})_{B\geq 1}$ in $\F$, such that $\liminf_{B\rightarrow\infty}\P(\Omega_{\epsilon,B})\geq 1-\epsilon$, and a constant  $C_{\epsilon}\in [1,\infty)$  for which we have
\begin{align}\label{eq:Show_indMain_2}
\max\Big(\|\hat{\theta}_{1,B}^\omega-\theta_\star\|,\|\hat{\theta}_{1,B}^\omega-\hat{\theta}^\omega_{K_{2,\theta_0},n_{1,B}}\|\Big)\leq C_{\epsilon}\, B^{-c_{\alpha,\delta}},\quad\forall \omega \in \Omega_{\epsilon,B},\quad\forall B\in\mathbb{N}
\end{align}
with 
$$
c_{\alpha,\delta}=\min\bigg(\frac{(1-\delta) \alpha}{1+\alpha},\frac{\delta}{2}\bigg)=\frac{(1-\delta)\alpha}{1+\alpha}.
$$
Remark that, by Lemma \ref{lemma:gamma}, the sequences $(n_B)_{B\geq 1}$, $(\tau_B)_{B\geq 1}$ and $(r_B)_{B\geq 1}$ defined in \eqref{eq:def_seq} satisfy the assumptions of Lemma \ref{lemma:budget}. Then, the result of the first part of the theorem for $J=1$ follows from \eqref{eq:Show_indMain_2}.

We assume now that $J>1$ and, without loss of generality, we assume  that $K_{j+1,\theta_0}=K$ for all $j\in\{1,\dots,J\}$ and for some compact and convex set $K\subset\R^d$. In addition, we let
$$
\lambda_{J}=\frac{1}{2}\inf_{\theta\in K}\lambda_{\min}\big(\E\big[\nabla^2 f(\theta,Z)\big]\big)
$$
and, for all $\epsilon\in(0,1)$, we let $C'_\epsilon<\infty$ be such that
$$
\liminf_{B\rightarrow\infty}\P\Big(\|\hat{\theta}_{ K,n_{j,B}}-\theta_\star\|\leq C'_\epsilon n_{j,B}^{-\frac{\alpha}{1+\alpha}}\Big)\geq 1-\frac{\epsilon}{J},\quad\forall j\in\{1,\dots,J\}.
$$
Remark that, for all $\epsilon\in(0,1)$, such a constant $C'_\epsilon<\infty$ exists by Lemma \ref{lemma:MLE}. Remark also that, using Fr\'echet's inequality,
\begin{align}\label{eq:lim_MLE}
\liminf_{B\rightarrow\infty}\P\Big(\max_{j\in\{1,\dots,J\}}\|\hat{\theta}_{ K,n_{j,B}}-\theta_\star\|\leq C'_\epsilon n_{j,B}^{-\frac{\alpha}{1+\alpha}}\Big)\geq 1-\epsilon.
\end{align}

Then, for all  $\epsilon\in(0,1)$ and  $B\in\mathbb{N}$,  we let
\begin{align*}
\Omega'_{\epsilon,B}&= \Omega_{\epsilon,B}
\cap_{j=1}^J\Big(\Omega_{j,B}^{(1)}\cap \Omega_{j,B}^{(2)}\Big)\\
&\cap \bigg\{ \omega\in\Omega:\, \min_{j\in\{1,\dots,J\}}\inf_{\theta\in  K}\lambda_{\min}\big(H^\omega_{n_{j,B}}(\theta)\big)\geq \lambda_{J},\,\,\max_{j\in\{1,\dots,J\}}n_{j,B}^{\frac{\alpha}{1+\alpha}}\|\hat{\theta}^\omega_{ K,n_{j,B}}-\theta_\star\|\leq C'_\epsilon,\\
& \hspace{0.7cm}\hat{\theta}^\omega_{ K, n_j}\in \mathring{K}\text{ for all $j\in\{1,\dots,J\}$}\bigg\}
\end{align*}
with $\Omega_{\epsilon,B}$ as above, $\Omega_{j,B}^{(1)}$ as in Lemma \ref{lemma:stay_C} and  $\Omega_{j,B}^{(2)}$ as in Lemma \ref{lemma:Error_F}. Remark that, by   Lemmas \ref{lemma:e-value}, \ref{lemma:stay_C} and \ref{lemma:Error_F}, and by \eqref{eq:lim_MLE}, we have $\liminf_{B\rightarrow\infty}\P(\Omega'_{\epsilon,B})\geq 1-\epsilon$.

Let $\epsilon\in(0,1)$, $B\in\mathbb{N}$ and $\omega\in \Omega'_{\epsilon,B}$. Remark that $\nabla F^\omega_{n_{j,B}}(\hat{\theta}^\omega_{K,n_{j,B}})=0$ for all $j\in\{1,\dots,J\}$,  since for all $j\in\{1,\dots, J\}$ the function $F^{\omega}_{n_{j,B}}$ is strictly convex on $K$ (since $\lambda_J>0$ under \ref{assumption1} and by Lemma \ref{lemma:assume2}) while  $\hat{\theta}^\omega_{K, n_j}\in\mathring{K}$. Therefore, using Taylor's theorem, the fact that $\|Ax\|\geq \lambda_{\min}(A)\|x\|$ for any $d\times d$ symmetric matrix $A$ and $x\in\R^d$, we have
\begin{align}\label{eq:thm2b1}
F^\omega_{n_{j,B}}(\theta)-F^\omega_{n_{j,B}}(\hat{\theta}^\omega_{K,n_{j,B}})\geq \lambda_J \|\theta-\hat{\theta}^\omega_{K,n_{j,B}}\|^2,\quad\forall \theta\in K.
\end{align}

For all $j\in\mathbb{N}$ let $D_j$ be as in Lemma \ref{lemma:Error_F} and assume without loss of generality that $D_j\leq D_{j+1}$ for all $j\in\mathbb{N}$ and that $D_J/\lambda_J\geq 1$.  In addition, let $J_B^\omega$ denote the smallest $j\in\{1,\dots,J\}$ such that $\hat{\theta}^\omega_{j,B}=\hat{\theta}^{\omega}_{J,B}$. Then, noting that $n_{j,B}\leq n_{j+1,B}$ for all $j\in\mathbb{N}$, if $J_B^\omega>1$ we have
\begin{align*}
\|\hat{\theta}_{J_B^\omega,B}^\omega-\hat{\theta}^\omega_{K,n_{J_B^\omega,B}}\|^2&\leq \frac{1}{\lambda_J}\Big(F^\omega_{n_{J_B^\omega,B}}(\hat{\theta}_{J_B^\omega,B}^\omega)-F^\omega_{n_{J_B^\omega,B}}(\hat{\theta}^\omega_{K,n_{J_B^\omega,B}})\Big)\\
&\leq \frac{ D_J\|\hat{\theta}_{J_B^\omega-1,B}^\omega-\hat{\theta}^\omega_{K,n_{J_B^\omega,B}}\|^2}{T^\omega_{J_B^\omega,B}\lambda_J}\\
&\leq \frac{2 D_J}{\lambda_J}\Big(\|\hat{\theta}_{J_B^\omega-1,B}^\omega-\hat{\theta}^\omega_{K,n_{J_B^\omega-1,B}}\|^2+\|\hat{\theta}^\omega_{K,n_{J_B^\omega,B}}-\hat{\theta}^\omega_{K,n_{J_B^\omega-1,B}}\|^2\Big)\\
&\leq   \frac{4 D_J}{\lambda_J}\Big(\|\hat{\theta}_{J_B^\omega-1,B}^\omega-\hat{\theta}^\omega_{K,n_{J_B^\omega-1,B}}\|^2+  (C'_\epsilon)^2 n_{1,B}^{-\frac{2\alpha}{1+2\alpha}}\Big)\\
&\leq \Big(\frac{4 D_J}{\lambda_J}\Big)^{J}\|\hat{\theta}_{1,B}^\omega-\hat{\theta}^\omega_{K,n_{1,B}}\|^2+(C'_\epsilon)^2 n_{1,B}^{-\frac{2\alpha}{1+2\alpha}}\sum_{j=1}^J(4 D_J/\lambda_J)^j\\
&\leq \Big(\frac{4 D_J}{\lambda_J}\Big)^{J} C^2_{\epsilon}\, B^{-2c_{\alpha,\delta}}+(C'_\epsilon)^2 n_{1,B}^{-\frac{2\alpha}{1+2\alpha}}\sum_{j=1}^J(4 D_J/\lambda_J)^j
\end{align*}
where the first inequality uses \eqref{eq:thm2b1} and the last inequality uses \eqref{eq:Show_indMain_2}.

If $J_B^\omega=1$ then, from  \eqref{eq:Show_indMain_2}, we have
$$
\|\hat{\theta}_{J_B^\omega,B}^\omega-\hat{\theta}^\omega_{K,n_{J_B^\omega,B}}\|^2\leq C_\epsilon^2 B^{-2c_{\alpha,\delta}}.
$$
Therefore,
\begin{align*}
\|\hat{\theta}_{J,B}^\omega-\hat{\theta}^\omega_{K,n_{J^\omega,B}}\|=\|\hat{\theta}_{J_B^\omega,B}^\omega-\hat{\theta}^\omega_{K,n_{J^\omega,B}}\| \leq C''_\epsilon  B^{- c_{\alpha,\delta}}
\end{align*}
where
\begin{align*}
 C''_\epsilon=\bigg(\Big(\frac{4 D_J}{\lambda_J}\Big)^{J} C^2_{\epsilon}+(C'_\epsilon)^2\sum_{j=1}^J(4 D_J/\lambda_J)^j\Big)^{1/2}.
\end{align*}
Therefore,
\begin{align*}
\|\hat{\theta}_{J,B}^\omega-\theta_\star\|&\leq \|\hat{\theta}^\omega_{K,n_{J^\omega,B}}-\theta_\star\|+\|\hat{\theta}_{J,B}^\omega-\hat{\theta}^\omega_{K,n_{J^\omega,B}}\|\\
&\leq C'_\epsilon n_{1,B}^{-\frac{\alpha}{1+\alpha}}+C''_\epsilon  B^{- c_{\alpha,\delta}}\\
&\leq (C'_\epsilon+C''_{\epsilon})B^{- c_{\alpha,\delta}}.
\end{align*}
The result of the first part of the theorem follows.
\end{proof}

\subsubsection{Proof of the second part of Theorem \ref{thm:rate_GD_main}}

\begin{proof}[Proof of the 2nd part of Theorem \ref{thm:rate_GD_main}]

Let $c_\alpha=\alpha/(1+\alpha)$ and, for all $j\in\mathbb{N}$ and $B\in\mathbb{N}$,  let $n_{j,B}=\lceil  \kappa B^{\gamma_j}\rceil$, $\tau_{j,B}= \tau  B^{-c_\alpha \gamma_j} $ and $r_{j,B}=B^{- c_\alpha  \gamma_j}$. Finally, let $r_{0,B}=1$ for all $B\in\mathbb{N}$ and, for all $x\in \R^d$ and $j\in\mathbb{N}$, let $\Theta_{j,x}$ be as defined in Lemma \ref{lemma:stay_C} and $K_{j+1,x}$ be as defined in Lemma \ref{lemma:Error_F}.

To prove the second part of the theorem we show by induction    that for all $j\in\mathbb{N}$ there exist, for all $\epsilon\in(0,\infty)$,  a sequence $(\Omega_{\epsilon,j,B})_{B\geq 1}$ in $\F$  such that $\liminf_{B\rightarrow\infty}\P(\Omega_{\epsilon,j,B})\geq 1-\epsilon$, and a constant  $C_{\epsilon,j}\in [1,\infty)$  for which we have
\begin{align}\label{eq:Show_indMain}
B_j^\omega\geq \frac{B}{C_{\epsilon,j}},\quad \|\hat{\theta}_{j,B}^\omega-\theta_\star\|\leq C_{\epsilon,j}\, r_{j,B}, \quad \|\hat{\theta}_{j,B}^\omega-\hat{\theta}^\omega_{K_{j+1,\theta_0},n_{j,B}}\|\leq C_{\epsilon,j}\, r_{j,B},\quad  \hat{\theta}_{j,B}^\omega\in \Theta_{j+1,\theta_0}
\end{align}
for all $w\in\Omega_{\epsilon,j,B}$ and $ B \in\mathbb{N}$.

To do so for all $B\in\mathbb{N}$ let $n_B=n_{1,B}$, $\tau_B=\tau_{1,B}$ and $r_B=\sup_{(\theta,\theta')\in K_{2,\theta_0}}\|\theta-\theta'\|$, and note that, by  Lemma \ref{lemma:gamma}, these sequences $(n_B)_{B\geq 1}$, $(\tau_B)_{B\geq 1}$ and $(r_B)_{B\geq 1}$ satisfy the assumptions of Lemma \ref{lemma:budget}. Therefore, by applying Lemma \ref{lemma:budget} with $c=1$, $x=\theta_0$ and $j=1$, it follows that for all $\epsilon\in(0,\infty)$ there exist  a sequence $(\Omega_{\epsilon,1,B})_{B\geq 1}$ in $\F$ such that   $\liminf_{B\rightarrow\infty}\P(\Omega_{\epsilon,j,B})\geq 1-\epsilon$, and  a constant $C_{\epsilon,1}\in [1,\infty)$, such that  \eqref{eq:Show_indMain} holds for all $w\in\Omega_{\epsilon,1,B}$ and all $ B \in\mathbb{N}$ when $j=1$.

Assume now that for some $s\in\mathbb{N}$   there exist, for all $\epsilon\in(0,\infty)$,  a sequence $(\Omega_{\epsilon,s,B})_{B\geq 1}$ in $\F$ such that $\liminf_{B\rightarrow\infty}\P(\Omega_{\epsilon,s,B})\geq 1-\epsilon$   and a constant  $C_{\epsilon,s}\in[1,\infty)$   for which \eqref{eq:Show_indMain} holds  for all $w\in\Omega_{\epsilon,s,B}$ and all $ B \in\mathbb{N}$ when $j=s$. 

For all $\epsilon>0$ and $B\in\mathbb{N}$ we  now let   $\tilde{B}^\omega_{\epsilon,s}= \max(B^{\omega}_s,B/C_{\epsilon,s}) $ for all $\omega\in\Omega$. In addition, for all $B\in\mathbb{N}$, we let  $n_B=n_{s+1,B}$, $\tau_B=\tau_{s+1,B}$ and $r_B= r_{s,B}$ and remark that, by Lemma \ref{lemma:gamma}, these   sequences $(n_B)_{B\geq 1}$, $(\tau_B)_{B\geq 1}$ and $(r_B)_{B\geq 1}$ satisfy the assumptions of Lemma \ref{lemma:budget}. Let $\epsilon>0$. Then,  by applying Lemma \ref{lemma:budget} with $c=1/C_{\frac{\epsilon}{2},s}$, $x=\theta_0$ and $j=s+1$,  it follows that   there exist  a sequence $(\Omega'_{\epsilon/2,s+1,B})_{B\geq 1}$ in $\F$ such that   $\liminf_{B\rightarrow\infty}\P(\Omega'_{\epsilon/2,s+1,B})\geq 1-\epsilon/2$ and  a constant $\tilde{C}_{\epsilon/2,s+1}\in [1,\infty)$ such that, for all $B\in\mathbb{N}$ and $\omega\in \Omega_{\epsilon/2,s,B}\cap \Omega'_{\epsilon/2,s+1,B}$, we have
\begin{equation}\label{eq:main_thm2}
\begin{split}
& \hat{\theta}^\omega_{s+1,B}\in\Theta_{s+2,\theta_0},\quad B_{s+1}^\omega\geq \frac{B}{2 C_{\frac{\epsilon}{2},s}},\\
&\|\hat{\theta}^\omega_{s+1,B} -\hat{\theta}^\omega_{K_{s+1,\theta_0}, n_{s+1,B}}\|\leq \tilde{C}_{\epsilon/2,s+1} \bigg( \tau_{s+1,B}+r_{s,B}\sqrt{ \frac{n_{s+1,B}}{B}} \bigg)\\
&\|\hat{\theta}^\omega_{s+1,B} -\theta_\star\|\leq \tilde{C}_{\epsilon/2,s+1} \bigg( \tau_{s+1,B}+r_{s,B}\sqrt{ \frac{n_{s+1,B}}{B}} \bigg).
\end{split}
\end{equation}

To proceed further remark that, for some constant $D<\infty$,   
\begin{align}\label{eq:in_tat}
\tau_{s+1,B}+r_{s,B}\sqrt{ \frac{n_{s+1,B}}{B}}&\leq D\bigg(B^{- c_\alpha \gamma_{s+1}}+ B^{\frac{\gamma_{s+1}-1}{2}  - c_\alpha \gamma_s}\bigg),\quad\forall B\in\mathbb{N}.
\end{align}

To proceed further let $\gamma'_{s+1}$ be as defined in Lemma \ref{lemma:gamma} when $\alpha'=\alpha$, and note that  $\gamma'_{s+1}$ is such that
\begin{align*}
B^{- c_\alpha \gamma'_{s+1}}=B^{\frac{\gamma'_{s+1}-1}{2}  - c_\alpha\gamma_s},\quad\forall B\in\mathbb{N}.
\end{align*}
By Lemma \ref{lemma:gamma},  we have $\gamma_{s+1}< \gamma'_{s+1}$  and thus, since for all $B\in\mathbb{N}\setminus\{1\}$ the mapping $\gamma\mapsto   B^{- c_\alpha \gamma}$ is decreasing in $\gamma$ while the mapping $\gamma\mapsto B^{\frac{\gamma-1}{2}  - c_\alpha \gamma_s}$ is increasing in $\gamma$, it follows that
$$
 B^{\frac{\gamma_{s+1}-1}{2}  - c_\alpha \gamma_s}\leq  B^{- c_\alpha \gamma_{s+1}},\quad\forall B\in\mathbb{N}
$$
and thus, by \eqref{eq:in_tat},
\begin{align}\label{eq:thm2_b2}
\tau_{s+1,B}+r_{s,B}\sqrt{ \frac{n_{s+1,B}}{B}}&\leq 2D  B^{- c_\alpha \gamma_{s+1}}=2D r_{s+1,B},\quad\forall B\in\mathbb{N}.
\end{align}

By combining \eqref{eq:main_thm2} and \eqref{eq:thm2_b2}, it follows that there exists a constant $C_{\epsilon,s+1}\in[1,\infty]$ such that, for $j=s+1$,  \eqref{eq:Show_indMain} holds for all  $\omega\in \Omega_{\epsilon/2,s,B}\cap \Omega'_{\epsilon/2,s+1,B}$ and $B\in\mathbb{N}$ where, using Fr\'echet's inequality,
$$
\liminf_{B\rightarrow\infty}\P\big(\Omega_{\epsilon/2,s,B}\cap \Omega'_{\epsilon/2,s+1,B}\big)\geq  \liminf_{B\rightarrow\infty}\P\big(\Omega_{\epsilon/2,s,B}\big)+\liminf_{B\rightarrow\infty}\P\big(\Omega'_{\epsilon/2,s+1,B}\big)-1\geq 1-\epsilon.
$$
Since $\epsilon>0$ is arbitrary, this concludes to   show     that for all $j\in\mathbb{N}$ there exists, for all   $\epsilon\in(0,\infty)$,  a sequence $(\Omega_{\epsilon,j,B})_{B\geq 1}$ in $\F$  such that $\liminf_{B\rightarrow\infty}\P(\Omega_{\epsilon,j,B})\geq 1-\epsilon$, and a constant  $C_{\epsilon,j}\in [1,\infty)$ such that \eqref{eq:Show_indMain} holds for all $\omega\in\Omega_{\epsilon,j,B}$ and all $B\in\mathbb{N}$. The second part of the theorem follows.

\end{proof}

\end{document}